\newtheorem{thm}{Theorem}[section]
\newtheorem{lem}[thm]{Lemma}
\newtheorem{cor}[thm]{Corollary}
\newtheorem{prp}[thm]{Proposition}
\newtheorem{cnj}[thm]{Conjecture}
\newtheorem{maintheorem}{Theorem}
\theoremstyle{definition}
\newtheorem{dfn}[thm]{Definition}
\newtheorem{rem}[thm]{Remark}
\theoremstyle{remark}
\newcommand{\Co}{\mathbb{C}}
\newcommand{\R}{\mathbb{R}}
\newcommand{\RR}{\mathbb{R}}
\newcommand{\Z}{\mathbb{Z}}
\newcommand{\ZZ}{\mathbb{Z}}
\newcommand{\Zp}{\mathbb{Z}_p}
\newcommand{\QQ}{\mathbb{Q}}
\newcommand{\orb}{{\mathrm{orb}}}
\newcommand{\Hom}{{\mathrm{Hom}}}
\newcommand{\ind}{{\mathrm{ind}}}
\newcommand{\Fr}{F}
\newcommand{\Ss}{\mathbb{S}}
\newcommand{\K}{K}
\newcommand{\To}{\rightarrow}
\newcommand{\Orb}{\mathcal{O}}
\newcommand{\Or}{\mathrm{O}}
\newcommand{\SOr}{\mathrm{SO}}
\newcommand{\SUr}{\mathrm{SU}}
\title[Odd-dimensional Besse orbifolds are developable]{Odd-dimensional orbifolds with all geodesics closed are covered by manifolds}
\author[Amann]{Manuel Amann}
\address[Amann]{Institut f\"ur Mathematik, Differentialgeometrie, Universit\"at Augsburg, Universit\"atsstra\ss{}e 14, 86159 Augsburg, Germany}
\email{manuel.amann@math.uni-augsburg.de}
\author[Lange]{Christian Lange}
\address[Lange]{Mathematisches Institut der Universit\"at zu K\"oln, Weyertal 86-90, 50931 K\"oln, Germany}
\email{clange@math.uni-koeln.de}
\author[M. Radeschi]{Marco Radeschi}
\address[Radeschi]{University of Notre Dame, Department of Mathematics, 255 Hurley, Notre Dame, IN 46556 }
\email{mradesch@nd.edu}
\thanks{}
\begin{document}

\maketitle

\begin{abstract}
Manifolds all of whose geodesics are closed have been studied a lot, but there are only few examples known. The situation is different if one allows in addition for orbifold singularities. We show, nevertheless, that the abundance of new examples is restricted to even dimensions. As one key ingredient we provide a characterization of orientable manifolds among orientable orbifolds in terms of characteristic classes.
\end{abstract}

%\tableofcontents

\section{Introduction}

It is an essential question in Riemannian geometry to understand the behaviour of closed geodesics and to relate it to properties of the ambient space.

A vast literature has been devoted to studying the existence, and number of closed geodesics in compact Riemannian manifolds. Often results in this respect, or their proofs, depend on the topology of the space. Striking examples are that every metric on a closed manifold has at least one closed geodesic due to Lyusternik--Fet \cite{LuFe}, and that, by the combined works of Gromoll--Meyer \cite{MR0264551} and Sullivan--Vigu\'e-Poirrier \cite{MR0455028}, every closed Riemannian manifold admits infinitely many closed geodesics as long as its rational cohomology is not generated by one element.

Another, somewhat opposite direction of research, has been to understand the Riemannian manifolds all of whose geodesics are closed \cite{Besse}, here called \emph{Besse manifolds}. The compact rank one symmetric spaces (CROSSes) $\Ss^n$, $\mathbb{CP}^n$, $\mathbb{HP}^n$, $\operatorname{CaP}^2$ are the canonical examples. These are in fact the only simply connected spaces known to support a \emph{Besse metric}. Moreover, by the collective work of Bott, Samelson and McCleary, their cohomology rings are the only ones that can show up in the simply connected setting \cite{MR0073993,Samelson,McCl}:

\begin{thm}[Bott--Samelson--McCleary]\label{BottSam}
Given $M$ a Besse manifold, its universal cover $\tilde M$ is also Besse and
\begin{align*}
H^*(\tilde M;\Z)\cong H^*(X;\Z)
\end{align*}
with $X$ a CROSS as above.

In particular, if $n:=\dim \tilde M$ is odd, then $\tilde M$ is homeomorphic to $\Ss^n$ (using the Poincar\'e Conjecture).
\end{thm}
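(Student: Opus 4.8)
The plan is to reduce to the case that $\tilde M$ is simply connected and Besse, run the Morse-theoretic argument of Bott--Samelson--McCleary there, and then specialize to odd dimensions. For the reduction, note first that by Wadsley's theorem (see \cite{Besse}) the geodesic flow $\phi^t$ of a complete Besse manifold $M$ on its unit tangent bundle $T^1M$ is periodic, say $\phi^\ell=\mathrm{id}$, and that $M$ is then compact. Assuming $\dim M\ge 2$, I would lift $\phi^t$ to the Riemannian universal cover $\tilde M$: the time-$\ell$ map $\tilde\phi^\ell$ covers $\mathrm{id}$, hence is a deck transformation, and since $T^1\tilde M$ is connected it is the derivative of a single element $g\in\pi_1(M)$, which is central because deck transformations commute with $\tilde\phi^t$. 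Then $g$ shifts every unit-speed geodesic of $\tilde M$ by arclength $\ell$, so $\tilde M$ is Besse precisely when $g$ has finite order; the finiteness of $\mathrm{ord}(g)$, equivalently of $\pi_1(M)$, is classical \cite{Besse}. This makes $\tilde M$ a closed simply connected Besse manifold whose geodesic flow is periodic of period $\ell$.

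For the core I would study the energy functional $E$ on the based loop space $\Omega\tilde M=\Omega_p\tilde M$; since $\tilde\phi^\ell=\mathrm{id}$, every geodesic issuing from $p$ returns to $p$, so the critical set of $E$ is the constant loop together with the geodesic loops at $p$, which organize into a sequence of critical submanifolds $N_1,N_2,\dots$ with $N_k$ swept out by the $k$-fold iterates. The crucial input is Bott's iteration formula for the Morse index \cite{MR0073993,Samelson}: the index $\lambda(N_k)$ grows linearly in $k$ with a controlled increment, so that successive indices are separated by gaps exceeding the dimensions of the $N_k$. This lacunary behaviour forces the Morse--Bott inequalities to be equalities with no cancellation, which computes $H_*(\Omega\tilde M;\Z)$ as the direct sum of the homologies of the $N_k$, each shifted by its index.

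Feeding the resulting Poincar\'e series through the path--loop fibration $\Omega\tilde M\to P\tilde M\to\tilde M$, with $P\tilde M$ contractible, I would deduce that $H^*(\tilde M;\Q)$ is a truncated polynomial algebra on a single generator, i.e.\ that $\tilde M$ is a rational cohomology CROSS. Upgrading this to integral coefficients and, above all, fixing the integral ring structure is McCleary's refinement \cite{McCl}, and gives $H^*(\tilde M;\Z)\cong H^*(X;\Z)$ for one of the CROSSes $X$. I expect the genuine obstacles to lie exactly here: establishing the Morse--Bott nondegeneracy of the (a priori singular, and possibly exceptionally short) critical submanifolds so that Bott's index calculus applies, and determining the multiplicative structure over $\Z$ rather than only the additive one.

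Finally, for the odd-dimensional statement, among the CROSSes only the spheres are odd-dimensional, since $\mathbb{CP}^n$, $\mathbb{HP}^n$ and $\operatorname{CaP}^2$ have real dimensions $2n$, $4n$ and $16$. Thus if $n=\dim\tilde M$ is odd then necessarily $X=\Ss^n$ and $H^*(\tilde M;\Z)\cong H^*(\Ss^n;\Z)$. Since $\tilde M$ is a closed simply connected manifold with the integral homology of $\Ss^n$, the Hurewicz and Whitehead theorems show that a map representing a generator of $\pi_n(\tilde M)\cong\Z$ is a homotopy equivalence $\Ss^n\to\tilde M$, so $\tilde M$ is a homotopy sphere; the topological Poincar\'e conjecture then yields that $\tilde M$ is homeomorphic to $\Ss^n$.
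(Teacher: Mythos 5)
The first thing to say is that the paper does not prove Theorem \ref{BottSam} at all: it is quoted as a known result and attributed to Bott, Samelson and McCleary \cite{MR0073993,Samelson,McCl}, so there is no in-paper argument to measure yours against. Judged against the classical literature, your outline reconstructs the intended strategy faithfully: Wadsley's theorem to obtain a common period and compactness, the lift of the time-$\ell$ map to a central deck transformation to reduce to the simply connected case, Bott's index iteration and the lacunary principle on a loop space, the path--loop fibration to recover $H^*(\tilde M;\QQ)$, McCleary's mod $p$ loop-space computations to pin down the integral ring, and finally the observation that spheres are the only odd-dimensional CROSSes together with Hurewicz--Whitehead and the Poincar\'e conjecture. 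The last paragraph is complete and correct as written, and you are right to locate the genuinely hard work in the Morse--Bott nondegeneracy and in the integral multiplicative structure.

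The one step that would fail as stated is your description of the critical set of $E$ on the based loop space $\Omega_p\tilde M$. The identity $\tilde\phi^{\ell}=\mathrm{id}$ (after replacing $\ell$ by a suitable multiple once $g$ is known to have finite order) guarantees that every geodesic from $p$ closes up smoothly at time $\ell$, but it does not exclude geodesic loops at $p$ with a corner, i.e.\ geodesics returning to $p$ at times that are not multiples of $\ell$. Since the variation fields in $\Omega_p\tilde M$ vanish at the basepoint, such broken-at-$p$ loops are also critical points of $E$, and they are not captured by your submanifolds $N_k$; your additive computation of $H_*(\Omega\tilde M;\ZZ)$ therefore omits part of the critical set. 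This is precisely why the classical treatment in \cite{Besse}, and the orbifold analogue proved in this paper (Theorem \ref{T:uniformly-bounded}), work instead with the space $\Omega_{p,q}$ of paths from $p$ to a second, generically chosen point $q$ not conjugate to $p$: there the critical points are honest geodesic segments, Wadsley's theorem sorts them into finitely many families per period window, and one extracts only \emph{uniformly bounded} Betti numbers, recovering the ring structure afterwards from the H-space structure of the loop space rather than from an exact Morse--Bott identity. A smaller caveat: the finiteness of $\pi_1(M)$, which you invoke to close the reduction, is itself classically derived from the same loop-space growth estimates, so in a self-contained write-up it cannot simply be quoted independently of the main argument.
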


Apart from this result, very little is known: for example, it is not known if exotic spheres admit Besse metrics. Furthermore, except a couple of constructions giving rise to infinite families of Besse metrics on spheres, no other Besse metrics are known. In particular, it is not known if there are non-canonical Besse metrics on the projective spaces. As a matter of fact, Lin--Schmidt proved that the only Besse metric on a real projective space of dimension at least $4$ is the round one \cite{LiSch}.

The study of closed geodesics can be extended to the \emph{Riemannian orbifold} setting (cf. Section \ref{sub:basics_on_orbifolds}). In particular, it makes sense to search for closed geodesics on Riemannian orbifolds \cite{MR1419471,GuHa,Dragomir} and to investigate \emph{Besse orbifolds}, that is, Riemannian orbifolds all of whose geodesics are closed \cite{MR2529473,La}. Allowing for orbifold singularities gives rise to interesting new phenomena and questions. For instance, while Guruprasad and Haefliger proved the GM--SVP result for Riemannian orbifolds and the Lyusternik-Fet result for non-developable orbifolds \cite{GuHa}, it is in general not known if every closed Riemannian orbifold has a closed geodesic. Moreover, it provides a much wider class of examples: one immediately gets infinitely many examples of Besse orbifolds in every even dimension, given by weighted projective spaces (see Section \ref{sub:basics_on_orbifolds}) with interesting properties; e.g. their geodesic length spectra are much more complicated than in the manifold case \cite{MR2529473}.

With this in mind, it may come as a surprise that in odd dimensions even the orbifold setting is very rigid, i.e. not richer than the manifold category, which is exactly expressed by our main result.
\begin{maintheorem}\label{T:main-theorem}
Any odd-dimensional Besse orbifold is of the form $(M,g)/\Gamma$ for some Riemannian manifold $M$ homeomorphic to a sphere, a Besse metric $g$, and some finite group $\Gamma$ of isometries of $(M,g)$. Moreover, if $n\neq 3$ and $\Gamma$ contains a fixed-point-free involution, then $(M,g)$ is isometric to a round sphere and $\Gamma\subset \Or(n+1)$.
\end{maintheorem}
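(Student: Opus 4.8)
\emph{Proof strategy.} The plan is to treat the two assertions separately, reducing everything to the orbifold-simply-connected case. Since a Besse orbifold is compact, I would first pass to its universal orbifold cover $\tilde{\mathcal{O}}$, which carries an induced Besse metric (the orbifold analogue of the first part of Theorem \ref{BottSam}). If one shows that $\tilde{\mathcal{O}}$ is a manifold homeomorphic to $\Ss^n$, then, writing $\Gamma=\pi_1^{\orb}(\mathcal{O})$, the orbifold is $\tilde{\mathcal{O}}/\Gamma$; as $\tilde{\mathcal{O}}$ is compact and $\Gamma$ acts properly discontinuously by isometries, $\Gamma$ is forced to be finite, which is exactly the shape of the first sentence. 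To control $\tilde{\mathcal{O}}$ topologically, I would exploit periodicity of the geodesic flow (Wadsley) to produce a Seifert $\Ss^1$-fibration $T^1\tilde{\mathcal{O}}\to\mathcal{B}$ onto the space of geodesics and then run the Bott--Samelson--McCleary machinery (Morse theory of the energy functional on the orbifold free loop space) to conclude that the integral cohomology of the underlying space of $\tilde{\mathcal{O}}$ is that of an odd sphere $\Ss^n$.

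The central step, and the one I expect to be the main obstacle, is to promote this cohomology sphere to an actual topological manifold, i.e.\ to rule out genuine orbifold singularities. This is precisely where I would invoke the characteristic-class characterization of manifolds among orientable orbifolds advertised in the abstract: being a manifold is detected by the vanishing of a suitable (torsion) characteristic class, and I would verify that on a cohomology $\Ss^n$ this class must vanish. Odd-dimensionality enters here in an essential way: the singular locus of an orientable orbifold is a suborbifold of \emph{even} codimension $\geq 2$, and in odd total dimension its presence is incompatible with the torsion-free sphere cohomology, via Poincar\'e duality and the structure of the links of the singular strata; the weighted projective spaces show that this mechanism genuinely breaks down in even dimensions. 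Once $\tilde{\mathcal{O}}$ is recognized as a closed simply connected manifold with the integral cohomology of $\Ss^n$, the Poincar\'e Conjecture yields that it is homeomorphic to $\Ss^n$, exactly as in Theorem \ref{BottSam}, and setting $M=\tilde{\mathcal{O}}$ with its Besse metric $g$ completes the first assertion.

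For the rigidity statement, assume $n\neq 3$ and let $\tau\in\Gamma$ be a fixed-point-free involution. Then $\tau$ acts freely and isometrically on $(M,g)\cong\Ss^n$, so the quotient $(M,g)/\langle\tau\rangle$ is a Besse manifold whose fundamental group is $\Z/2$ and whose universal cover is a homotopy sphere; that is, it is a (homotopy) real projective space of dimension $n\geq 5$. By Lin--Schmidt \cite{LiSch}, the only Besse metric on such a space of dimension at least $4$ is the round one, so $g$ descends from a round metric and $(M,g)$ is therefore isometric to the round sphere $\Ss^n$. Since the isometry group of the round sphere is $\Or(n+1)$ and $\Gamma$ acts by isometries, we obtain $\Gamma\subset\Or(n+1)$. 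The hypothesis $n\neq 3$ is used precisely to satisfy the dimension bound in Lin--Schmidt (and to avoid three-dimensional subtleties), the quotient in dimension three being a spherical space form that this method cannot yet pin down.
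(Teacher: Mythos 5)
Your overall architecture (reduce to the orbifold universal cover via the orbifold Wadsley theorem, show the cover is a manifold, then quote Bott--Samelson--McCleary, the Poincar\'e Conjecture, and Lin--Schmidt) matches the paper, and your treatment of the second sentence of the theorem is essentially the paper's argument. The gap is in your central step, the passage from ``cohomology sphere'' to ``manifold''. The recognition criterion (Theorem \ref{T:orb-is-man}) is stated in terms of characteristic classes living in the \emph{orbifold} cohomology $H^*_{\orb}(\Orb;\ZZ_p)=H^*(B\Orb;\ZZ_p)$, and this is not determined by the cohomology of the underlying topological space: with $\ZZ_p$-coefficients the orbifold cohomology records the group cohomology of the local groups and can be infinite-dimensional even when the underlying space is a sphere. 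Concretely, $\Ss^3$ equipped with a cone singularity of order $p$ along an unknotted circle is an orientable, odd-dimensional orbifold whose underlying space is an integral cohomology sphere and whose singular stratum has even codimension, yet it is not a manifold; so the purely topological mechanism you invoke (``Poincar\'e duality and the structure of the links of the singular strata'') cannot rule out singularities. (That example is not orbifold-simply-connected, but it falsifies your stated implication, and you give no argument that actually uses simple connectivity or the Besse condition at this point.)

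What the paper does here is the hard core of the whole argument, and it is absent from your outline: it computes $H^*(\Omega B\tilde{\Orb};\ZZ_p)\cong\ZZ_p[x_{n-1}]$ as a graded vector space (Proposition \ref{P:HBOmOZp}), using the uniform bound on the Betti numbers of the orbifold loop space together with its Hopf-algebra structure, Browder's biprimitive decomposition and the Bockstein spectral sequence; it then feeds this into the Leray--Serre spectral sequence of the path fibration $\Omega B\tilde{\Orb}\to PB\tilde{\Orb}\to B\tilde{\Orb}$ to deduce $H^i(B\tilde{\Orb};\ZZ_p)=0$ for $0<i<p(n-1)+1$, $i\neq n$, whence the orbifold Pontryagin classes vanish and all orbifold Stiefel--Whitney classes vanish (killing $\mathsf{w}_n$ requires Wu's formula together with the fact that an odd $n>1$ is not a power of $2$). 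Only then does Theorem \ref{T:orb-is-man} apply. Your proposal would need to supply this mod-$p$ computation of the orbifold (not underlying-space) cohomology; without it the proof does not close.
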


Notice that this theorem, in particular, extends Theorem \ref{BottSam} for orbifolds in the odd dimensional case. For even dimensions, we still think that Theorem \ref{BottSam} can be generalized as follows:

\begin{cnj}\label{cnj:conjecture}
The integral (orbifold) cohomology ring of an even-dimensional, simply connected Besse orbifold is generated by one element.
\end{cnj}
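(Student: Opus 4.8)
The plan is to adapt the loop-space Morse theory of Bott--Samelson--McCleary to the orbifold category, using the framework of Guruprasad--Haefliger. Throughout I read the orbifold cohomology ring as $H^*(B\mathcal{O};\mathbb{Z})$, the cohomology of the classifying space of the orbifold groupoid (the Borel construction when $\mathcal{O}$ is developable); with this reading the basic examples already obey the conclusion, since $H^*_{\mathbb{S}^1}(\mathbb{S}^{2n+1};\mathbb{Z})$ is a truncated polynomial ring on a single class of degree two, so weighted projective spaces have orbifold cohomology generated by one element. Note also that the odd-dimensional case of such a statement is subsumed by Theorem~\ref{T:main-theorem}, which produces a sphere cover and hence sphere cohomology; the genuine content is in even dimensions, where $\mathbb{CP}^n$, $\mathbb{HP}^n$ and weighted projective spaces live.

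First I would reduce the conjecture to a statement about the based loop space. In the simply connected setting the homology $H_*(\Omega X;\mathbb{Z})$ determines a great deal of the multiplicative structure of $H^*(X;\mathbb{Z})$, and the content of the manifold theorem is that $H^*(X;\mathbb{Z})$ is generated by one element exactly when $H_*(\Omega X;\mathbb{Z})$ has the Betti numbers and Pontryagin-ring structure of the loop space of a CROSS. It therefore suffices to show that the orbifold based loop space $\Omega\mathcal{O}$ has the homology of $\Omega(\text{CROSS})$.

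To access $H_*(\Omega\mathcal{O})$ I would run Morse theory for the energy functional on the loop space of $B\mathcal{O}$, as set up by Guruprasad--Haefliger. Since $\mathcal{O}$ is Besse, after rescaling so that all geodesics share a common length the geodesic flow on the unit tangent orbibundle $U\mathcal{O}$ is a periodic (Seifert) $\mathbb{S}^1$-action; consequently the critical set of the energy consists of the well-behaved orbifolds of $k$-fold iterated closed geodesics, and Morse theory assembles $H_*(\Omega\mathcal{O})$ from these with degree shifts given by their indices. Bott's index iteration formula, in its orbifold form (e.g.\ as developed by Lange), then shows that the index of the $k$-th iterate grows affinely in $k$. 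This affine growth forces the Betti numbers of $\Omega\mathcal{O}$ to grow at the minimal rate characteristic of CROSS loop spaces, which over $\mathbb{Q}$ at once pins the rational cohomology of $\mathcal{O}$ to be generated by one element.

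The main obstacle is the passage from rational to integral coefficients, exactly the step where McCleary's work was decisive in the manifold case. The orbifold structure injects torsion into $H^*(\mathcal{O};\mathbb{Z})$ --- visible already in weighted projective spaces --- so one must separate the torsion that is compatible with being generated by one element from spurious torsion that a bare Morse count would permit. I expect this to require a careful analysis of the integral Serre spectral sequence of the path--loop fibration over $B\mathcal{O}$, together with its multiplicative structure, to rule out extra generators; this, rather than the now-routine rational index estimate, should be the crux. A secondary hurdle is making orbifold loop-space Morse theory robust enough that the critical orbifolds and their equivariant normal-bundle data are controlled at the integral, not merely rational, level, which goes beyond what is currently established in the literature.
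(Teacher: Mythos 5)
You are attempting to prove Conjecture \ref{cnj:conjecture}, which the paper itself leaves open: Section \ref{conj} offers only supporting evidence, not a proof, and your own text is a research program rather than an argument --- you explicitly flag the decisive step as beyond the current literature. The part of your plan that does go through is exactly the part the paper already establishes: uniformly bounded Betti numbers for $\Omega B\mathcal{O}$ (Theorem \ref{T:uniformly-bounded}, obtained via the orbifold Wadsley theorem and fixed-endpoint Morse theory on the frame bundle, rather than a Bott iteration formula), and then rational homotopy theory (Proposition \ref{ratellprop}, even-dimensional case) showing that the \emph{rational} orbifold cohomology is a truncated polynomial algebra on one even-degree generator. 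The paper's further evidence is the direct computation $H^*_{orb}(\mathbb{CP}^n_a;\mathbb{Z})\cong \mathbb{Z}[u]/(a_0\cdots a_n\, u^{n+1})$ via the equivariant Thom isomorphism \cite{Hol}, refined by \cite{Qui71} into the sharper conjecture $H^*_{orb}(\mathcal{O};\mathbb{Z})\cong\mathbb{Z}[u]/(l\, u^{n+1})$ with $|l|=1$ if and only if $\mathcal{O}$ is a manifold. Everything integral beyond this is open.

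Moreover, your proposed reduction --- ``it suffices to show that $\Omega\mathcal{O}$ has the homology of $\Omega(\mathrm{CROSS})$'' --- is not merely unproven but cannot work as stated. Looping the fibration $\mathbb{S}^{2n+1}\to B\mathbb{CP}^n_a\to B\mathbb{S}^1$ shows that all weighted projective spaces have based loop space with the integral homology of $\mathbb{S}^1\times\Omega\mathbb{S}^{2n+1}$, i.e.\ of $\Omega\mathbb{CP}^n$, independently of the weights; yet their integral cohomology rings $\mathbb{Z}[u]/(N u^{n+1})$, $N=a_0\cdots a_n$, differ. So integral loop-space homology is blind to precisely the torsion phenomena the conjecture is about, and no Morse-theoretic count of critical orbifolds, however refined, can recover them. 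A McCleary-style mod-$p$ analysis also cannot conclude a CROSS-like answer: by Quillen's criterion (Proposition \ref{prprec} and the remark following it), an even-dimensional Besse orbifold that is not a manifold must have \emph{infinite-dimensional} $H^*_{orb}(\mathcal{O};\mathbb{Z}_p)$ for some prime $p$ --- visible in the Bockstein pairs of $\mathbb{Z}[u]/(Nu^{n+1})$ when $p\mid N$ --- which is exactly the mechanism your odd-dimensional template (and the paper's proof of Theorem \ref{T:main-theorem}) uses to \emph{exclude} singularities, and which here must instead be accommodated. The ``secondary'' spectral-sequence step you defer is therefore the entire content of the problem, and nothing in your sketch engages it.
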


For further motivation to this conjecture and for a more refined version of it we refer the reader to Section \ref{conj}.

\subsection{Idea of the proof, and structure of the article}

The first step toward the proof is to find a way to determine when an orbifold is actually a manifold, based on its orbifold cohomology. In this respect, we prove:

\begin{thm}\label{T:orb-is-man}
An orientable, compact Riemannian $n$-orbifold $\Orb$ is a manifold, if and only if the orbifold Stiefel Whitney classes $\mathsf{w}_i(\Orb)\in H^i_{orb}(\Orb;\ZZ_2)$ and the orbifold Pontryagin classes $\mathsf{p}_i(\Orb)\in H^{4i}_{orb}(\Orb;\ZZ_p)$ (for every $p$ odd prime) are nilpotent.
\end{thm}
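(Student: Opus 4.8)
The plan is to prove the two implications separately, the forward direction being soft and the converse carrying the content. Throughout, ``$\Orb$ is a manifold'' is read in the orbifold sense, i.e.\ all isotropy groups are trivial.

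For the easy direction, suppose $\Orb$ is a manifold. Then its orbifold classifying space is homotopy equivalent to $\Orb$ itself, so $H^*_{orb}(\Orb;R)=H^*(\Orb;R)$ is the ordinary cohomology of a compact $n$-manifold and vanishes in degrees $>n$. Since $\mathsf{w}_i(\Orb)$ and $\mathsf{p}_i(\Orb)$ with $i\ge 1$ are homogeneous of positive degree, their sufficiently high powers lie above degree $n$ and hence vanish; thus all of them are nilpotent.

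For the converse I would argue by contraposition and localize at a singular point. If $\Orb$ is not a manifold, choose $x$ with nontrivial isotropy group $\Gamma_x\subset\SOr(n)$ (using orientability), and let $V=\RR^n$ be its isotropy representation. The local chart at $x$ is modelled on $[\RR^n/\Gamma_x]$, whose orbifold cohomology is $H^*(B\Gamma_x;R)$ since $E\Gamma_x\times_{\Gamma_x}\RR^n\simeq B\Gamma_x$, and the inclusion of this chart induces a ring homomorphism $H^*_{orb}(\Orb;R)\to H^*(B\Gamma_x;R)$ under which $T\Orb$ restricts to the bundle associated to $V$. By naturality of characteristic classes, $\mathsf{w}_i(\Orb)$ and $\mathsf{p}_i(\Orb)$ are sent to $w_i(V)$ and $p_i(V)$. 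Choosing, via Cauchy's theorem, an element $g\in\Gamma_x$ of prime order $p$ and restricting once more to the cyclic subgroup $C_p=\langle g\rangle$ reduces everything to $H^*(BC_p;R)$. As these restrictions are ring homomorphisms, it suffices to exhibit a single non-nilpotent characteristic class of $\mathrm{Res}_{C_p}V$: since $\mathsf{w}_i(\Orb)$ (resp.\ $\mathsf{p}_i(\Orb)$) maps to $w_i(\mathrm{Res}_{C_p}V)$ (resp.\ $p_i(\mathrm{Res}_{C_p}V)$), non-nilpotency of the image forces non-nilpotency of the orbifold class upstairs.

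It then remains to prove the representation-theoretic input: a faithful orthogonal $C_p$-representation $V$ with $C_p\subset\SOr(V)$ always has a non-nilpotent $\ZZ_2$-Stiefel--Whitney class (when $p=2$) or a non-nilpotent mod-$p$ Pontryagin class (when $p$ odd). For $p=2$ the generator acts with $(-1)$-eigenspace $V_-$ of even dimension $2k\ge 2$; over $BC_2\simeq\RR P^\infty$ with $H^*(BC_2;\ZZ_2)=\ZZ_2[t]$ one computes $w(V)=(1+t)^{2k}$, so the top class $w_{2k}(V)=t^{2k}$ is a power of the polynomial generator and hence non-nilpotent. For odd $p$ one splits $V=\RR^{n_0}\oplus\bigoplus_{j=1}^m V_{a_j}$ into a trivial summand and two-dimensional rotation blocks with $a_j\not\equiv 0\pmod p$; each $V_{a_j}$ carries an invariant complex structure with $c_1=a_j y$ in $H^*(BC_p;\ZZ_p)=\ZZ_p[y]\otimes\Lambda(x)$, whence $p(V)=\prod_{j}(1+a_j^2y^2)$ and the top Pontryagin class $p_m(V)=\bigl(\prod_j a_j^2\bigr)y^{2m}$ has nonzero coefficient in $\ZZ_p$ and $m\ge 1$, so it too is a nonzero power of the polynomial generator and non-nilpotent. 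This completes the contrapositive.

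The main obstacle is not the final computation but making the localization rigorous: one must know that the orbifold tangent bundle and its characteristic classes are natural with respect to restriction to a local chart and to the subgroup $C_p$, and that the local orbifold cohomology is indeed $H^*(B\Gamma_x)$. I would take these facts from the orbifold-cohomology formalism set up in Section \ref{sub:basics_on_orbifolds}. The one genuinely delicate point in the computation is the odd-prime case, where a priori many Pontryagin classes could vanish mod $p$; this is resolved cleanly by passing to the top class, whose coefficient $\prod_j a_j^2$ is a product of nonzero elements of the field $\ZZ_p$ and is therefore nonzero.
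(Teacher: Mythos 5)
Your proof is correct, but it takes a genuinely different route from the paper's. The paper never computes a single characteristic class: it first uses finite generation of $H^*_{\orb}(\Orb;\ZZ_p)$ over $H^*(B\SOr(n);\ZZ_p)$ (Proposition \ref{P:orb-coh-finitely-gen}) to convert the nilpotency hypothesis into the statement that $H^*_{\orb}(\Orb;\ZZ_p)$ is finite dimensional for every prime; it then shows this fails at a singular point by producing a section of $F\Orb_{\ZZ_p}\to B\ZZ_p$ from a $\ZZ_p$-fixed frame, so that $H^*_{\ZZ_p}(F\Orb;\ZZ_p)$ is infinite dimensional, and finally transfers this to $H^*_{\orb}(\Orb;\ZZ_p)$ via the Leray--Serre spectral sequence of the $\SOr(n)/\ZZ_p$-bundle $F\Orb_{\ZZ_p}\to B\Orb$, which is where the monodromy Lemma \ref{lemnil} is needed. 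You instead restrict directly to the isotropy group of a singular point and compute $w(\mathrm{Res}_{C_2}V)$ and $p(\mathrm{Res}_{C_p}V)$ explicitly in $H^*(BC_p;\ZZ_p)$; this is essentially the Quillen-style localization the paper cites as an alternative, and it buys more (it names the specific non-nilpotent class) at the price of the naturality statement you flag. That statement is easy to supply in the paper's framework without mentioning the tangent bundle: if $z\in F\Orb$ is a frame over the singular point $x$, its orbit $\SOr(n)\cdot z\cong \SOr(n)/\Gamma_z$ includes $\SOr(n)$-equivariantly into $F\Orb$, inducing a ring map $H^*_{\SOr(n)}(F\Orb;\ZZ_p)\to H^*_{\SOr(n)}(\SOr(n)/\Gamma_z;\ZZ_p)\cong H^*(B\Gamma_z;\ZZ_p)$ under which the classifying map $\varphi:B\Orb\to B\SOr(n)$ restricts to the map induced by the inclusion $\Gamma_z\hookrightarrow \SOr(n)$, i.e. by the isotropy representation; your computations ($w(V)=(1+t)^{2k}$ with $2k\geq 2$ forced by orientability, and $p_m(V)=\bigl(\prod_j a_j^2\bigr)y^{2m}$ with each $a_j\neq 0$ in the field $\ZZ_p$) are then correct and complete the argument.
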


We then focus on a Besse orbifold $\Orb$. First, an orbifold version of Wadsley's Theorem gives, as a byproduct, that the universal cover of a Besse orbifold is again a Besse orbifold. This reduces our work to the simply connected case. We then consider a loop space $\Omega^{\orb}_{p,q}\Orb$ which somehow keeps track of the orbifold structure of $\Orb$.
Such a space has already been introduced in \cite{GuHa}. Here, we give a different, though equivalent definition in terms of the frame bundle, instead of using the language of groupoids. Then we prove, via Morse theoretic arguments, that the Betti numbers of  $\Omega^{\orb}_{p,q}\Orb$ are uniformly bounded. Secondly, we show that $\Omega^{\orb}_{p,q}\Orb$ is an H-space, and we use this structure to prove that the cohomology $H^*(\Omega^{\orb}_{p,q}\Orb,F)$, with $F=\QQ$ and $\ZZ_p$, is extremely simple (i.e. isomorphic as a graded vector space to a polynomial algebra $F[x]$). Finally, this is used to prove that, in the simply connected case, the orbifold cohomology of $\Orb$ satisfies the conditions of Theorem \ref{T:orb-is-man} and thus $\Orb$ is a (Besse) manifold.

The paper is structured as follows. In Section \ref{sec:prelim} we recall basic notions and results about (Riemannian) orbifolds and Hopf algebras that are required for our presentation. In particular, in Section \ref{sec:examples} we describe the class of examples of Besse orbifolds given by weighted projective spaces in more detail. In Section \ref{sec:recognition_thm} we provide a short and self-contained proof of Theorem \ref{T:orb-is-man}. In Section \ref{sec:orbi_loop_morse} we introduce loop spaces for orbifolds and apply Morse theory to obtain the universal bounds on the Betti numbers of $\Omega^{\orb}_{p,q}\Orb$ for a Besse orbifold $\Orb$. In Section \ref{sec:rational_cohomology_ring} we apply rational homotopy theory to deduce that for an odd-dimensional simply connected Besse orbifold the rational cohomology ring of the loop space is a polynomial ring. In Section \ref{S:Zp-cohomology} we build on ideas of McCleary and Browder to determine the corresponding cohomology ring with coefficients in a finite field of prime order, at least as a vector space. In Section \ref{sec:proof_main_thm} we show via an application of the Leray--Serre spectral sequence that this information suffices to guarantee the assumptions of Theorem \ref{T:orb-is-man} and therewith prove our main result. Finally, in Section \ref{conj} we provide more background on our Conjecture \ref{cnj:conjecture}. Moreover, there is an appendix that contains complete proofs for several auxiliary statements on Hopf algebras (see Appendix \ref{sec:mani_hopf_algbras}) and for an orbifold version of Wadsley's theorem (see Appendix \ref{APP:Wadsley}).

\bigskip

\noindent\textbf{Acknowledgments.} We would like to thank Alexander Lytchak for drawing our attention to the starting question of this paper and the work \cite{Qui71} of Quillen.

The first named author was supported both by a Heisenberg grant and another research grant of the German Research Foundation; he is moreover associated to the DFG Priority Programme 2026. The second named author is partially supported by the DFG funded project SFB/TRR 191. The third named author is partially supported by the NSF grant DMS-1810913.

\section{Preliminaries} \label{sec:prelim}

\subsection{Riemannian orbifolds}\label{sub:basics_on_orbifolds}

A \emph{length space} is a metric space in which the distance between any two points can be realized as the infimum of the lengths of all rectifiable paths connecting these points \cite{MR1835418}. An \emph{$n$-dimensional Riemannian orbifold} $\Orb^n$ is a length space such that for each point $x \in \Orb$ there exists a neighborhood $U$ of $x$ in $\Orb$, an $n$-dimensional Riemannian manifold $M$ and a finite group $\Gamma$ acting by isometries on $M$ such that $U$ and $M/\Gamma$ are isometric. In this case we call $M$ a \emph{manifold chart} and $U$ a local chart of $\Orb$ around $x$. We call a chart \emph{good} if $\Gamma$ fixes a preimage of $x$ in $M$. Behind this definition lies the fact that an isometric action of a finite group on a simply connected Riemannian manifold can be recovered from the corresponding metric quotient \cite[Lem.~2.1]{La2}. By this fact every Riemannian orbifold admits a canonical smooth structure. Roughly speaking, this means that there exist equivariant, smooth transition maps between the manifolds charts (cf. \cite{MR2973378}). Conversely, every smooth orbifold admits a Riemannian metric and in this sense the two notions are equivalent.
 In the following we sometimes omit the prefix ``Riemannian'' if a certain property only depends on the smooth structure. For a point $x$ on an orbifold the linearized isotropy group of a preimage of $x$ in a manifold chart is uniquely determined up to conjugation. Its conjugacy class is denoted as $\Gamma_x$ and is called the \emph{local group} of $\Orb$ at $x$. A point $x\in \Orb$ is called \emph{regular} if its local group is trivial and otherwise \emph{singular}.

An \emph{(orbifold) geodesic} on a Riemannian orbifold is a continuous path that can locally be lifted to a geodesic in a manifold chart. A \emph{closed geodesic} is a continuous loop that is a geodesic on each subinterval. A \emph{prime geodesic} is a closed geodesic that is not a concatenation of nontrivial closed geodesics. See also \cite{La} for a more detailed discussion on orbifold geodesics. We call a Riemannian metric on an orbifold as well as a connected Riemannian orbifold \emph{Besse}, if all its geodesics ``are closed'', i.e. if they factor through closed geodesics. A Besse orbifold is geodesically complete and hence complete by the Hopf--Rinow theorem for length spaces \cite[Thm.~2.5.28]{MR1835418}.

\subsection{Examples of Besse orbifolds}\label{sec:examples} Consider the action of the unit circle $\Ss^1\subset \Co^1$ on the unit sphere $\Ss^{2n+1}\subset \Co^{n+1}$ defined by
\[
			z (z_0,\ldots,z_n)=(z^{a_0}z_0,\ldots, z^{a_n}z_n)
\]
for some weights $a_i\in \ZZ \backslash\{0\}$. Set $a=(a_0,\ldots,a_n)$. This action is almost free, i.e. its isotropy groups are finite, and the quotient space $\mathbb{CP}_a^n=\Ss^{2n+1}/\Ss^1$ with its quotient metric is a $2n$-dimensional Riemannian orbifold. It is referred to as a \emph{weighted complex projective space}. The geodesics on $\mathbb{CP}_a^n$ are the images of horizontal geodesics on $\Ss^{2n+1}$, i.e. of those geodesics that are perpendicular to the $\Ss^1$-orbits \cite{MR2529473}. In particular, all geodesics of $\mathbb{CP}_a^n$ are closed, i.e. $\mathbb{CP}_a^n$ is a Besse orbifold. However, not all of them have the same length \cite{MR2529473}. For $n=1$ the orbifold $\mathbb{CP}_a^n$ is also known as a \emph{football} or \emph{spindle orbifold}. In \cite[Sect.~2.2]{La} an infinite-dimensional family of Besse metrics on spindle orbifolds is described.

Similarly, one can show that if $\rho$ is a representation of $\SUr(2)$ on a complex vector space $\Co^n$ such that all irreducible components are complex even-dimensional, then the induced action of $\SUr(2)$ on the unit sphere in $\Co^n$ is almost free and the corresponding quotient, which is also referred to as a \emph{weighted quaternionic projective space} \cite{MR3521081}, is a Besse orbifold \cite[p.~154]{MR0950559}. Indeed, the only infinite, closed subgroups of $\SUr(2)$ are its maximal, one-dimensional tori, all of which are conjugated, and $\SUr(2)$ itself, and the restriction of an irreducible representation of $\SUr(2)$ on $\Co^n$ to its maximal torus $\Ss^1$ has weights $(n-1)-2k$, $k=0,\ldots,n-1$, so that infinite isotropy groups can only occur for odd $n$.

All the weighted projective spaces we have discussed in this section are even-dimensional and, by the long exact sequence in homotopy, simply-connected as orbifolds, i.e. they do not admit non-trivial orbifold coverings (see Sections \ref{sub:orbi_cover} and \ref{sub:orb_invariants}).

\subsection{Orbifold coverings}\label{sub:orbi_cover}

A \emph{covering} of complete $n$-dimensional Riemannian orbifolds $\Orb'$ and $\Orb$ is a submetry $p:\Orb' \To \Orb$, i.e. a map satisfying $p(B_r(x))=B_r(p(x))$ for all $x\in \Orb'$ and all $r>0$. In this case each point $x\in \Orb$ has a neighborhood $U$ isometric to some $M/\Gamma$ for which each connected component $U_i$ of $p^{-1}(U)$ is isometric to $M/\Gamma_i$ for some subgroup $\Gamma_i<\Gamma$ such that the isometries are compatible with the natural projections $M/\Gamma_i \To M/\Gamma$ \cite{La2}. In fact, for smooth orbifolds the concept was originally defined by Thurston using the latter property \cite{Thurston}. According to him an orbifold is called \emph{good} (or developable) if it is covered by a manifold. Otherwise it is called \emph{bad}. Note that a finite covering of a Besse orbifold is itself Besse. Thurston also showed that the theory of orbifold coverings works analogously to the theory of ordinary coverings \cite{Thurston}. In particular, there exist universal coverings and one can define the \emph{orbifold fundamental group} $\pi_1^{orb}(\Orb)$ of a connected orbifold $\O$ as the deck transformation group of the universal covering. The orbifold fundamental group also admits interpretations in terms of loops. To explain the one outlined in \cite{La2} we first introduce the frame bundle.

\subsection{Orbifold frame- and tangent bundles} \label{sub:orb_bundles}
Given a Riemannian orbifold $\Orb$ one can construct an \emph{(orthonormal) frame bundle} $\Fr\Orb$ by taking an open cover $\{U_i\}_i$ of $\Orb$ such that the $U_i$ admit good manifold charts $\tilde{U}_i$, and gluing the quotients $\Fr (\tilde{U}_i)/\Gamma_i$ of the orthonormal frame bundles $\Fr (\tilde{U}_i)$ by the induced actions of $\Gamma_i$ via differentials together, cf. \cite[Def.~1.22]{MR2359514}. This frame bundle is a manifold with an almost free $\Or(n)$-action ($n=\dim \Orb$) and an $\Or(n)$-equivariant projection $\pi:\Fr \Orb \to \Orb$ (where $\Or(n)$ acts trivially on $\Orb$). Moreover, $\Fr \Orb$ carries a natural metric \cite{MR0200865,MR0511689} so that the $\Or(n)$-action is isometric, and the projection $\pi: \Fr \Orb\to \Orb$ is an \emph{orbifold Riemannian submersion}, in the sense that  for every point $z$ in $\Fr \Orb$, there is a neighbourhood $V$ of $z$ such that $U=\pi(V)$ is a good chart, and $\pi|_V$ factors as $V\stackrel{\tilde{\pi}}{\longrightarrow} \tilde{U}\stackrel{\pi_U}{\longrightarrow} U$, where $\tilde{\pi}$ is a standard Riemannian submersion, inducing an isometry $\Fr \Orb /\Or(n)\to \Orb$.

 An orbifold is called \emph{orientable} if its frame bundle is disconnected. By passing to the orientable double cover \cite[Lem.~4.8]{La2} we can assume that all our Besse orbifolds are orientable. For an orientable orbifold we also sometimes simply work with the bundle of oriented orthonormal frames, on which $\SOr(n)$ acts isometrically with quotient $\Orb$, and still denote this bundle as $F \Orb$.

 Similar to the construction of the frame bundle one can construct a tangent bundle $T \Orb$ which is an orbifold and which carries a natural metric as well corresponding to the Sasaki-metric in the manifold case \cite{MR0112152,MR0145456}.

\subsection{Algebraic orbifold invariants}\label{sub:orb_invariants}

Given a regular point $p \in \Orb$ let $\Omega_{p,p} F\Orb:=\{ \gamma\in C^0(I,F \Orb) \mid \pi(\gamma(0))=\pi(\gamma(1))=p\}$ be the set of all continuous paths in $F \Orb$ that start and end at the fiber $\pi^{-1}(p)$ over $p$. One can think of an element in $\Omega_{p,p} F\Orb$ as a path in $\Orb$ together with a moving orthonormal frame. The action of $\Or(n)$ on $F \Orb$ induces an action of $\Or(n)$ on $\Omega_{p,p} F\Orb$. The fundamental group of $\Orb$ is isomorphic to the quotient of $\Omega_{p,p} F\Orb$ by the equivalence relation generated by the action of $\Or(n)$ on $F \Orb$ and by homotopies through paths in $\Omega_{p,p} F\Orb$, where the group multiplication is defined via concatenations of paths \cite[Prop.~4.15]{La2}.

A model for the so-called \emph{classifying space} $B\Orb$ of an orbifold $\Orb$ is given by the Borel-construction $F \Orb _{\Or(n)}$ of the action of $\Or(n)$ on $F \Orb$ \cite{Haef,GuHa}. More precisely, let $E\Or(n) \To B \Or(n)$ be a universal bundle of $\Or(n)$, then we can take $B \Orb$ to be the quotient of $F \Orb \times E \Or(n)$ by the diagonal action of $\Or(n)$. Moreover, if $G$ is any Lie group that acts almost freely and isometrically on a Riemannian manifold $M$ so that the quotient $\Orb=M/G$ is isometric to $\Orb$, then the corresponding Borel construction $M_G$ is weakly homotopy equivalent to $B\Orb$ (cf. \cite{La2}). The projections to the two factors induce projections $\pi_1: B \Orb \To \Orb$ and $\pi_2:B \Orb \To B \Or(n)$. Orbifold coverings of $\Orb$ are in one-to-one correspondence with coverings of $B\Orb$. In particular, the orbifold fundamental group of $\Orb$ is isomorphic to the ordinary fundamental group of $B \Orb$. Similarly, one can define other invariants of $\Orb$ in terms of $B\Orb$, e.g. the \emph{orbifold (co)homology} and \emph{homotopy groups}
\[
			H_*^{\orb}(\Orb)=H_*(B\Orb), \; H^*_{\orb}(\Orb)=H^*(B\Orb) \text{ and } \pi_*^{\orb}(\Orb,x_0)=\pi_*(B\Orb,\hat x_0),
\]
where $\hat x_0\in B \Orb$ projects to $x_0\in \Orb$. We will also work with the \emph{orbifold Stiefel Whitney classes} $\mathsf{w}_i(\Orb)\in H^i_{orb}(\Orb;\ZZ_2)$ and the \emph{orbifold Pontryagin classes} $\mathsf{p}_i(\Orb)\in H^{4i}_{orb}(\Orb;\ZZ_p)$ which are defined as the pullbacks of the respective classes of $B\Or(n)$ via the projection $\pi_2: B\Orb \To B \Or(n)$.

\subsection{Orbifold characteristic classes}

The map $F \Orb \times E \Or(n)\to B\Orb$ is, by construction, a principal $\Or(n)$-bundle. In particular, it is classified by a map $\varphi: B\Orb\to B\Or(n)$. Recall the following result:

\begin{thm}
For any odd prime $p$,
\[
H^*(B\Or(n);\ZZ_p)=\ZZ_p[\mathsf{p}_1,\ldots, \mathsf{p}_{\lfloor {n-1\over 2}\rfloor}],\qquad \textrm{where }\mathsf{p}_i\in H^{4i}(B\Or(n);\ZZ_p).
\]
Furthermore,
\[
H^*(B\Or(n);\ZZ_2)=\ZZ_2[\mathsf{w}_1,\ldots, \mathsf{w}_{n}],\qquad \textrm{where }\mathsf{w}_i\in H^{i}(B\Or(n);\ZZ_2).
\]
\end{thm}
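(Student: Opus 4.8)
The plan is to establish the two isomorphisms separately, in each case by restricting from $B\Or(n)$ to the classifying space of a maximal torus (for odd $p$) or of a maximal elementary abelian $2$-subgroup (for $p=2$) and identifying the cohomology of $B\Or(n)$ with the corresponding ring of Weyl invariants. For $\ZZ_2$-coefficients I would run an induction on $n$ using the Gysin sequence of a sphere bundle, while for an odd prime $p$ I would invoke Borel's theorem, which applies because $\Or(n)$ carries no odd torsion.

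For the mod-$2$ statement, recall that the inclusion $\Or(n-1)\hookrightarrow\Or(n)$ realizes $B\Or(n-1)$ as the total space of the unit sphere bundle $S^{n-1}\to B\Or(n-1)\xrightarrow{\pi} B\Or(n)$ of the tautological bundle $\gamma_n$ over $B\Or(n)$, whose mod-$2$ Euler class is the top Stiefel–Whitney class $\mathsf w_n$. Starting from $B\Or(1)=\RR P^\infty$, where $H^*(B\Or(1);\ZZ_2)=\ZZ_2[\mathsf w_1]$, I would assume inductively that $H^*(B\Or(n-1);\ZZ_2)=\ZZ_2[\mathsf w_1,\dots,\mathsf w_{n-1}]$. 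Since $\pi^*\mathsf w_i(\gamma_n)=\mathsf w_i(\gamma_{n-1})$ for $i<n$, the restriction $\pi^*$ is surjective, so the connecting homomorphisms in the Gysin sequence vanish and it breaks into short exact sequences $0\to H^{*-n}(B\Or(n))\xrightarrow{\cup\mathsf w_n}H^{*}(B\Or(n))\xrightarrow{\pi^*}H^{*}(B\Or(n-1))\to 0$. Comparing Poincaré series then yields $P_{B\Or(n)}(t)=P_{B\Or(n-1)}(t)\,(1-t^{n})^{-1}=\prod_{i=1}^{n}(1-t^{i})^{-1}$, which is exactly the Poincaré series of $\ZZ_2[\mathsf w_1,\dots,\mathsf w_n]$; since the $\mathsf w_i$ generate (again by the short exact sequence), the tautological surjection $\ZZ_2[\mathsf w_1,\dots,\mathsf w_n]\to H^*(B\Or(n);\ZZ_2)$ is forced to be an isomorphism. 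Equivalently, one may restrict to the diagonal $2$-torus $(\ZZ_2)^n\subset\Or(n)$, under which $H^*(B\Or(n);\ZZ_2)$ embeds into $\ZZ_2[t_1,\dots,t_n]$ ($t_i\in H^1$) as the symmetric polynomials, with $\mathsf w_i\mapsto e_i(t_1,\dots,t_n)$.

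For an odd prime $p$, the integral cohomology of $\Or(n)$ has only $2$-primary torsion, so $H^*(\SOr(n);\ZZ_p)$ is an exterior algebra on odd-degree generators and Borel's theorem identifies $H^*(B\SOr(n);\ZZ_p)$ with $H^*(BT;\ZZ_p)^{W_0}$, where $T$ is a maximal torus of rank $\lfloor n/2\rfloor$ and $W_0$ is the Weyl group of $\SOr(n)$. Passing from $\SOr(n)$ to $\Or(n)$ amounts to taking in addition the invariants of the $\pi_0(\Or(n))=\ZZ_2$-action, so that $H^*(B\Or(n);\ZZ_p)\cong H^*(BT;\ZZ_p)^{W}$ for the full Weyl group $W=N_{\Or(n)}(T)/T$. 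Concretely, writing $H^*(BT;\ZZ_p)=\ZZ_p[x_1,\dots,x_{\lfloor n/2\rfloor}]$ with $x_i\in H^2$, the group $W$ is the hyperoctahedral group acting by permutations of the $x_i$ together with all sign changes $x_i\mapsto\pm x_i$ (for even $n$ the sign changes absent from $W_0$ are realized by orientation-reversing reflections, while for odd $n$ one already has $W_0=W$). Because $2$ is invertible in $\ZZ_p$, the invariants of the sign changes are precisely the polynomials in the squares $x_1^2,\dots,x_{\lfloor n/2\rfloor}^2$, and the residual permutation action then cuts these down to the symmetric ones; thus $H^*(B\Or(n);\ZZ_p)$ is the polynomial algebra on the elementary symmetric functions of the $x_i^2$, which are the Pontryagin classes $\mathsf p_i$ of degree $4i$.

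The one genuinely non-formal ingredient, and hence the main obstacle, is Borel's theorem in the odd-primary case: it is what guarantees that restriction to the maximal torus is injective, so that the ring computed above is all of $H^*(B\Or(n);\ZZ_p)$ and not merely a subalgebra, and its hypothesis is exactly the absence of odd torsion in $\Or(n)$. (Conceptually the analogous mod-$2$ statement is Quillen's detection of $H^*(B\Or(n);\ZZ_2)$ on elementary abelian subgroups \cite{Qui71}, but the Gysin induction above avoids invoking it.) Granting this, matching the generators $\mathsf w_i\mapsto e_i(t)$ and $\mathsf p_i\mapsto e_i(x^2)$ is routine symmetric-function bookkeeping, and the two displayed isomorphisms follow.
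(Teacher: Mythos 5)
The paper does not prove this statement at all: it is introduced with ``Recall the following result'' and used as a known classical fact (it is the computation of $H^*(B\Or(n))$ away from $2$ and mod $2$ found, e.g., in Milnor--Stasheff \cite{MS} and in Brown \cite{Bro82}). Your proposal is therefore not paralleling an argument in the paper but supplying one, and it is correct and essentially the textbook proof. The mod~$2$ half --- Gysin induction along $S^{n-1}\to B\Or(n-1)\to B\Or(n)$, with surjectivity of $\pi^*$ coming from the inductive hypothesis so that the sequence splits into short exact sequences, followed by the Poincar\'e-series count and the generation argument --- is exactly the standard derivation, and the base case $B\Or(1)=\mathbb{RP}^\infty$ is right. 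The odd-$p$ half is also sound: $H^*(B\Or(n);\ZZ_p)=H^*(B\SOr(n);\ZZ_p)^{\pi_0(\Or(n))}$ because $p$ is odd, Borel's theorem applies to $\SOr(n)$ since odd primes are not torsion primes for it, and the invariants of the full hyperoctahedral group on $\ZZ_p[x_1,\dots,x_{\lfloor n/2\rfloor}]$ are the symmetric functions in the $x_i^2$, i.e.\ the Pontryagin classes. You correctly isolate Borel's theorem as the one non-formal input. One point worth recording: your computation yields $\lfloor n/2\rfloor$ polynomial generators $\mathsf{p}_1,\dots,\mathsf{p}_{\lfloor n/2\rfloor}$, which is the correct count (already for $n=2$ one has $H^*(B\Or(2);\ZZ_p)\cong\ZZ_p[x]^{x\mapsto -x}=\ZZ_p[\mathsf{p}_1]\neq\ZZ_p$), whereas the statement as printed has $\lfloor(n-1)/2\rfloor$; the two agree precisely when $n$ is odd, which is the only case the paper needs since $n=\dim\Orb$ is odd there, but for even $n$ the printed index is off by one. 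So if anything your argument corrects the statement rather than falling short of it.
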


We define the \emph{orbifold Pontryagin classes} $\mathsf{p}_i(\Orb)\in H^{4i}(B\Orb(n);\ZZ_p)=H^{4i}_{\orb}(\Orb;\ZZ_p)$ (resp. the  \emph{orbifold Stiefel-Whitney classes} $\mathsf{w}_i(\Orb)\in H^i(B\Orb(n);\ZZ_2)=H^i_{\orb}(\Orb;\ZZ_2)$) as the pullbacks of $\mathsf{p}_i\in H^{4i}(B\Or(n);\ZZ_p)$ (resp. of the $\mathsf{w}_i\in H^{i}(B\Or(n);\ZZ_2)$) under the map $\varphi: B\Orb\to B\Or(n)$.

While the orbifold cohomology with rational coefficients coincides with the usual cohomology of the underlying topological space, the orbifold cohomology with $\ZZ_p$ coefficients may have infinite cohomological dimension. For instance, for a finite subgroup $G<\Or(n)$ the orbifold cohomology of $\R^n/G$ with $\ZZ_p$ coefficients coincides with the group cohomology of $G$ with $\ZZ_p$ coefficients which is always infinite dimensional.

 Nevertheless, we have the following:

\begin{prp}\label{P:orb-coh-finitely-gen}
Let $\Orb$ be a compact, orientable orbifold. Then $H^*_{\orb}(\Orb;\ZZ_p)$ is a finitely generated module over $H:=H^*(B\Or(n);\ZZ_p)$.
\end{prp}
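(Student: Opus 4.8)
The plan is to recognise the orbifold cohomology as a Borel equivariant cohomology and to feed the resulting fibration into the Leray--Serre spectral sequence. By definition $H^*_{\orb}(\Orb;\ZZ_p)=H^*(B\Orb;\ZZ_p)$, and $B\Orb=\Fr\Orb\times_{\Or(n)}E\Or(n)$ is the total space of the Borel fibration
\[
\Fr\Orb\hookrightarrow B\Orb\stackrel{\pi_2}{\longrightarrow}B\Or(n),
\]
whose projection $\pi_2=\varphi$ is exactly the map defining the $H$-module structure, $H=H^*(B\Or(n);\ZZ_p)$. Two inputs are immediate. Since $\Orb$ is compact, $\Fr\Orb$ is a compact manifold, so $H^t(\Fr\Orb;\ZZ_p)$ is finite-dimensional over $\ZZ_p$ and vanishes for $t>N:=\dim\Fr\Orb$; hence the spectral sequence occupies only the rows $0\le t\le N$. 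And by the theorem preceding this proposition $H$ is a polynomial ring over $\ZZ_p$, in particular a graded Noetherian ring.

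I would first remove the local coefficients using orientability. The generator of $\pi_1(B\Or(n))=\pi_0(\Or(n))=\ZZ_2$ acts on the fibre $\Fr\Orb$ by an orientation-reversing frame change, which, since $\Orb$ is orientable, interchanges the two components of the full frame bundle; thus the coefficient system $t\mapsto H^t(\Fr\Orb;\ZZ_p)$ is induced from the oriented frame bundle. Concretely, writing $\Fr\Orb$ now for the connected oriented frame bundle with its almost free isometric $\SOr(n)$-action, one has $B\Orb\simeq \Fr\Orb\times_{\SOr(n)}E\SOr(n)$, and the associated fibration $\Fr\Orb\hookrightarrow B\Orb\to B\SOr(n)$ has simply connected base. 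Its Leray--Serre spectral sequence therefore has untwisted coefficients, is a spectral sequence of modules over $H':=H^*(B\SOr(n);\ZZ_p)$ with $H'$-linear differentials, and satisfies
\[
E_2^{s,t}=H^s(B\SOr(n);\ZZ_p)\otimes_{\ZZ_p}H^t(\Fr\Orb;\ZZ_p),
\]
so each row $E_2^{*,t}$ is a free $H'$-module of finite rank. As $H'$ is again a polynomial ring, hence Noetherian, the finite sum $E_2=\bigoplus_{0\le t\le N}E_2^{*,t}$ is a finitely generated $H'$-module.

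Finite generation then propagates down the spectral sequence. Writing $E_\infty^{s,t}=Z_\infty^{s,t}/B_\infty^{s,t}$ with $B_\infty\subseteq Z_\infty\subseteq E_2$ the $H'$-submodules of infinite cycles and boundaries, $Z_\infty$ is a submodule of the Noetherian module $E_2$ and hence finitely generated, and $E_\infty=Z_\infty/B_\infty$ is a quotient of it, hence finitely generated over $H'$. Because the fibre direction is bounded, in each total degree $n$ the Serre filtration of $H^n(B\Orb;\ZZ_p)$ has length at most $N+1$ and its associated graded is $\bigoplus_{s+t=n}E_\infty^{s,t}$; lifting a finite set of homogeneous $H'$-module generators of $E_\infty$ and clearing filtration by a downward induction, which terminates by finiteness of the filtration, shows that these lifts generate $H^*(B\Orb;\ZZ_p)$ as an $H'$-module. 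This is the standard principle that a module whose associated graded is finitely generated over a Noetherian ring is itself finitely generated.

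It remains to descend from $H'$ to $H$, which I regard as the point needing the most care. The map $\pi_2$ factors as $B\Orb\to B\SOr(n)\to B\Or(n)$, so the $H$-module structure on $H^*(B\Orb;\ZZ_p)$ is the restriction of the $H'$-module structure along the ring map $r\colon H\to H'$ induced by $\SOr(n)\hookrightarrow\Or(n)$; thus it suffices to know that $H'$ is finitely generated as an $H$-module, whereupon transitivity of finite generation gives the claim. This finiteness can be read off the explicit descriptions: for $p=2$ the map $r\colon\ZZ_2[\mathsf w_1,\ldots,\mathsf w_n]\to\ZZ_2[\mathsf w_2,\ldots,\mathsf w_n]$ sends $\mathsf w_1\mapsto 0$ and exhibits $H'=H/(\mathsf w_1)$ as a cyclic $H$-module, while for odd $p$ one has $H'=H$ when $n$ is odd, and for $n$ even $H'$ is obtained from $H$ by adjoining the Euler class $e$, whose square is a Pontryagin class lying in $H$, so that $H'=H\oplus H\cdot e$ is free of rank two. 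In every case $H'$ is finitely generated over $H$, and therefore so is $H^*_{\orb}(\Orb;\ZZ_p)=H^*(B\Orb;\ZZ_p)$. The genuinely delicate steps, which the orientability hypothesis exists precisely to handle, are the reduction to untwisted coefficients and this final descent; the remainder is formal once one has Noetherianity of the base cohomology.
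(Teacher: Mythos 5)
Your proof is correct and follows essentially the same route as the paper: the Leray--Serre spectral sequence of $F\Orb\to B\Orb\to B\SOr(n)$, finite-dimensionality of $H^*(F\Orb;\ZZ_p)$, and Noetherianity of the polynomial base ring, with your direct ``$E_\infty$ is a subquotient of the Noetherian module $E_2$'' argument replacing the paper's page-by-page induction to the same effect. You are in fact more careful than the paper on two points it elides, namely the passage from the finitely generated associated graded $E_\infty$ to $H^*(B\Orb;\ZZ_p)$ itself via the finite filtration, and the descent from $H^*(B\SOr(n);\ZZ_p)$ to $H^*(B\Or(n);\ZZ_p)$ (the paper silently redefines $H$ as the former inside its proof); your handling of both is right.
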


\begin{proof}
Since $\Orb$ is orientable, choose an orientation and take $F\Orb$ to be the bundle of \emph{oriented} orthonormal frames of $\Orb$. Nothing changes from before, except that now the map $F\Orb$ is a principal $\SOr(n)$-bundle, and the projection $F\Orb\times E\SOr(n)\to B\Orb$ is classified by a map $B\Orb\to B\SOr(n)$.  Consider the Serre spectral sequence  with $\ZZ_p$-coefficients ($p$ a prime) of the homotopy fibration $F\Orb\to B\Orb\to B\SOr(n)$. The $E_2$-page is $E_2^{p,q}=H^p(B\SOr(n);H^q(F\Orb; \ZZ_p))$. Since $\Orb$ is compact, $F\Orb$ is a compact manifold, and thus $H^q(F\Orb; \ZZ_p)$ is a vector space over $\ZZ_p$ with finite dimension (say $m$). Moreover, $E_2^{*,*}$ is isomorphic, as an $H:=H^*(B\SOr(n);\ZZ_p)$-module, to $H^p(B\SOr(n);\ZZ_p^{\oplus m})\cong H^{\oplus m}$ and, in particular, it is a finitely generated module over $H$.

We claim, by induction, that every page $(E_r,d_r)$ is a finitely generated differential module over $H$: in fact, this is true for $(E_2,d_2)$ since $H$ embeds into $E_2^{*,0}$, and by the product structure in $E_2$. Moreover, if this is true for $(E_{r-1},d_{r-1})$, then $H$ commutes with $d_{r-1}$ and induces an $H$-module structure on $E_r$. Moreover, there is a ring homomorphism $\phi_r:H\to E_r^{*,0}$ such that the action of $h\in H$ on $x\in E_r$ is given by $h\cdot x=\phi_r(h)\cup x$. In particular, since $d_r$ is a derivation and the $H$-action is via multiplication by elements in $E_r^{*,0}$, it follows that $d_r$ commutes with $H$. Moreover, as a finitely generated module over the Noetherian ring $H$, $E_{r-1}$ is a Noetherian $H$-module and thus any $H$-submodule is finitely generated. In particular, $\ker(d_{r-1})$ is finitely generated as an $H$-module, and therefore the quotient $E_r=\ker(d_{r-1})/d_{r-1}(E_{r-1})$ is finitely generated as well.

Since $F\Orb$ is finite dimensional, the spectral sequence degenerates at the $(r_0=\dim F\Orb)$-page, thus $E^{*,*}_\infty=E^{*,*}_{r_0}$ is a finitely generated $H$-module. Since $E^{*,*}_{\infty}\simeq H^{*}(B\Orb;\ZZ_p)$ as $H$-modules, the result follows.
\end{proof}

\subsection{Hopf algebras}\label{SS:Hopf}

Through the paper, we will work with the following definitions.

\begin{dfn}
An \emph{algebra} is a graded vector space $A=\bigoplus_i A_i$ over a field $\K$, together with a graded linear map $\Phi:A\otimes A\to A$, and an element $1\in A_0$ such that $\Phi(1\otimes x)=\Phi(x\otimes 1)=x$ for every $x\in A$. An algebra $A$ is  \emph{associative} if $\Phi(\Phi(x\otimes y)\otimes z)=\Phi(x \otimes \Phi(y\otimes z))$ for every $x,y,z\in A$.
\end{dfn}

As usual, we will refer to $\Phi(x\otimes y)$ simply as $x\cdot y$ or, if there is no source of confusion, simply as $xy$.

\begin{dfn}
A \emph{coalgebra} is a graded vector space $A=\bigoplus_i A_i$ over a field $\K$, together with graded linear maps $\Psi:A\to A\otimes A$ such that, for every $x\in \bar{A}=\bigoplus_{i>0}A_i$, one has $\Psi(x)=1\otimes x+1\otimes x+\sum_i x'_i\otimes x''_i$ were $1\in A_0=K$ and $x'_i,x''_i\in \bar{A}$. A coalgebra $A$ is \emph{co-associative} if $(\Psi\otimes 1)\otimes \Psi=(1\otimes\Psi)\otimes \Psi$.
\end{dfn}

\begin{dfn}
A \emph{Hopf algebra} is a graded vector space $A=\bigoplus_i A_i$ over a field $\K$, together with the structure of an algebra $\Phi:A\otimes A\to A$, $1\in A_0$; and of a coalgebra $\Psi:A\to A\otimes A$, such that the map $\Psi$ is a morphism of algebras, where $A\otimes A$ is equipped with the multiplication $(x\otimes y)\cdot (z\otimes w)=(-1)^{\deg(y)\deg(z)} (x\cdot y)\otimes (z\cdot w)$.
\end{dfn}

Given any algebra $A$, the dual space $A^*=\Hom(A,\K)$ inherits the structure of a coalgebra, by defining $\Psi: A^*\to A^*\otimes A^*\simeq(A\otimes A)^*$ by $\Psi(\bar{x})(y\otimes z):=\bar{x}(y\cdot z)$ for any $\bar{x}\in A^*$ and $y,z\in A$. In particular, if $A$ is a Hopf algebra, then $A^*$ is a Hopf algebra as well.

The requirement that $A_0=\K$ is not always made, and Hopf algebras satisfying this condition are usually called \emph{connected}. Throughout this paper, we will only deal with connected Hopf algebras so we will not make this distinction. Furthermore, we will only work with $\K=\QQ$ or $\K=\ZZ_p$, which are examples of perfect fields. In this case, the following structure theorem applies (cf. \cite[Thm~7.11]{MM}):

\begin{thm}[Borel--Hopf Theorem]\label{T:Borel-Hopf}
If $A=\bigoplus_i A_i$ is a connected Hopf algebra over the perfect field $\K$, the multiplication in $A$ is (graded) commutative, and $\dim A_i<\infty$ for all $i$ then, as an algebra, $A$ is isomorphic with a tensor product $\bigotimes_{i\in I}A_i$ of Hopf algebras $A_i$, where $A_i$ is a Hopf algebra with a single generator $x_i$.
\end{thm}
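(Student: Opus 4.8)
The plan is to reproduce Borel's original argument: construct, degree by degree, a family of homogeneous generators $\{x_i\}_{i\in I}$ of $A$ such that each $x_i$ generates a monogenic sub-Hopf-algebra $B_i\subseteq A$, and such that the multiplication map $\mu\colon\bigotimes_{i\in I}B_i\to A$ is an isomorphism. Because $A$ is graded-commutative, the individual inclusions $B_i\hookrightarrow A$ have commuting images, so $\mu$ is automatically a homomorphism of algebras; the whole content is therefore to arrange that $\mu$ is bijective. Surjectivity will be clear once the $x_i$ are chosen to span the indecomposables $Q(A)=\bar A/\bar A^2$, and injectivity is the assertion that the \emph{admissible monomials} $\prod_i x_i^{e_i}$ (finite products with each exponent $e_i$ below the height of $x_i$) are linearly independent.

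To choose the generators I would induct on the degree. Suppose generators have been selected in all degrees $<d$; in degree $d$ pick homogeneous lifts $\xi$ of a basis of $Q(A)_d$. Each such $\xi$ must be corrected by a decomposable element, built from the already-chosen lower generators, so that its reduced coproduct $\bar\Psi(\xi)=\Psi(\xi)-\xi\otimes 1-1\otimes\xi$ lands in $B_\xi\otimes B_\xi$, where $B_\xi=\langle\xi\rangle$; this makes $B_\xi$ a sub-Hopf-algebra. The possible monogenic types are the polynomial algebra $\K[x]$, the exterior algebra $\Lambda(x)=\K[x]/(x^2)$ for $x$ of odd degree, and, in characteristic $p$, the truncated polynomial algebra $\K[x]/(x^{p^s})$. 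The only subtle point in the selection is the normalization of heights when $\operatorname{char}\K=p$: if $\xi^{p^s}$ is the first $p$-th power of $\xi$ that becomes decomposable, I would use surjectivity of the Frobenius on the perfect field $\K$ to extract $p$-th roots of the coefficients and adjust $\xi$ by a decomposable correction, so that $\xi^{p^s}$ is either $0$ (a truncated polynomial generator) or coincides with one of the generators chosen in degree $p^s\deg\xi$ (so that $\xi$ is polynomial). This bookkeeping is exactly where the hypothesis that $\K$ is perfect is indispensable.

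The main obstacle is the injectivity of $\mu$, i.e. the linear independence of the admissible monomials, which is the freeness statement hiding inside the theorem. I would prove it by the standard peeling-off induction: the indecomposable $x$ of least positive degree is primitive, since for degree reasons $\bar\Psi(x)\in\bar A\otimes\bar A$ has no room for cross terms, so $B=\langle x\rangle$ is a monogenic sub-Hopf-algebra and one shows that $A$ is \emph{free} as a $B$-module and splits, as an algebra, as $A\cong B\otimes C$ with $C$ again connected, graded-commutative and of finite type. Freeness is established by filtering $A$ by the powers of the augmentation ideal of $B$ and comparing the graded pieces through a degreewise dimension count, which is finite because $\dim A_i<\infty$ for every $i$; the primitivity of $x$ is what forces $1,x,x^2,\dots$ to remain independent modulo the complement up to the prescribed height. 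Iterating the splitting transfinitely along the chosen generators---at each stage the lowest-degree indecomposable of the remaining factor is again primitive for the same degree reason---exhausts $A$ and yields the tensor product decomposition. Alternatively, one may invoke the Milnor--Moore freeness theorem for a connected Hopf algebra over a sub-Hopf-algebra to obtain this freeness at once, which is the route taken in \cite{MM}.
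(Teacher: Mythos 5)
You should first note that the paper offers no proof of this statement: it is quoted from Milnor--Moore \cite[Thm~7.11]{MM} as a preliminary, so there is no in-text argument to compare yours against. Your sketch follows the classical Borel--Milnor--Moore route, and your final paragraph (peel off the monogenic sub-Hopf-algebra $B$ generated by the lowest-degree primitive, use freeness of $A$ over $B$ to split off a tensor factor, iterate) is the correct skeleton of the proof in \cite{MM}.

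However, the degree-by-degree construction in your second paragraph contains a step that fails. Requiring $\bar{\Psi}(\xi)\in B_\xi\otimes B_\xi$ with $B_\xi=\langle\xi\rangle$ forces $\bar{\Psi}(\xi)=0$: the reduced coproduct lies in $\bar{A}\otimes\bar{A}$, so the hypothesis places it in $\bar{B}_\xi\otimes\bar{B}_\xi$, which is concentrated in degrees at least $2\deg\xi$ and hence vanishes in degree $\deg\xi$. You are therefore asking for a \emph{primitive} lift of each indecomposable, and such lifts do not exist in general in characteristic $p$. A standard counterexample is the dual of $\ZZ_p[x]/(x^{p^2})$ with $x$ primitive of even degree: writing $\gamma_i$ for the basis dual to $x^i$, one has $\gamma_1^p=p!\,\gamma_p=0$, so the only decomposable element in the degree of $\gamma_p$ is $0$, while $\bar{\Psi}(\gamma_p)=\sum_{i=1}^{p-1}\gamma_i\otimes\gamma_{p-i}\neq 0$; hence $\gamma_p$ admits no primitive correction, even though as an algebra the dual is $\ZZ_p[\gamma_1]/(\gamma_1^p)\otimes\ZZ_p[\gamma_p]/(\gamma_p^p)$, exactly as Borel's theorem predicts. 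This is precisely why the theorem asserts only an \emph{algebra} isomorphism: the monogenic tensor factors cannot in general be realized as sub-coalgebras of $A$. The repair is to drop the requirement that each $B_\xi$ be a sub-Hopf-algebra and run only the peeling-off induction from your last paragraph, where at each stage the lowest-degree indecomposable of the quotient Hopf algebra $A//B$ is automatically primitive \emph{there}, and where the perfect-field hypothesis enters through the behaviour of the Frobenius when normalizing heights. With that correction your plan is sound, but at that point it amounts to the citation of \cite[Thm~7.11]{MM} that the paper itself makes.
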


We will need the following definitions:

\begin{dfn}
Given a Hopf algebra $A$, an element $x\in \bar{A}$ is called:
\begin{itemize}
\item \emph{decomposable} if it can be written as a finite sum $x=\sum_i y_iz_i$  for some $y_i,z_i\in \bar{A}$, and \emph{indecomposable} otherwise.
\item \emph{primitive} if $\Phi(x)=1\otimes x+ x\otimes 1$.
\end{itemize}
\end{dfn}
\begin{dfn}
A Hopf algebra $A$ is called:
\begin{itemize}
\item \emph{primitive} if there exists a set of primitive generators for $A$.
\item \emph{coprimitive} if primitive elements are indecomposable.
\item \emph{biprimitive} if it is both primitive and coprimitive.
\end{itemize}
\end{dfn}

Given a Hopf algebra $A$, the algebra and coalgebra structures define filtrations $F^iA$ and $G^iA$, by:
\begin{itemize}
\item $F^0A=A$ and $F^{i+1}A=\Phi(\bar{A}\otimes F^iA)\subseteq F^iA$.
\item $G^0A=A_0=\K$ and $G^{i}A:=\ker(\bar{\Psi}_i)\supseteq G^{i-1}A$, where $\bar{\Psi}_i=A\to (\bar{A})^{\otimes (i+1)}$ is defined iteratively by
\[
\bar{\Psi}_1(x)=\Psi(\rho(x))-\rho(x)\otimes 1-1\otimes \rho(x)\quad \bar{\Psi}_{i+1}=(\bar{\Psi}_1\otimes 1\otimes\ldots\otimes 1)\circ \bar{\Psi}_i
\]
and where $\rho:A \To \bar A$ is the natural projection.
\end{itemize}

Given such filtrations, one can define the spaces $E_0(A)=\bigoplus_iF^iA/F^{i+1}A$ and $_0E(A)=\bigoplus_i G^{i}A/G^{i-1}A$, which are both isomorphic to $A$ as graded vector spaces. Moreover, both spaces inherit a Hopf algebra structure from $A$. In particular, one can define the Hopf algebra $_0EE_0(A)$, and the following holds:

\begin{thm}[\cite{Bro63}, Corollary 2.6]
If $A$ is an associative Hopf algebra, then $_0EE_0(A)$ is a biprimitive Hopf algebra.
\end{thm}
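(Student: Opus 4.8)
The plan is to reduce biprimitivity to a pair of dual one-sided statements plus one preservation lemma, exploiting the duality $H\mapsto H^{*}=\Hom(H,\K)$ recalled above. For a connected Hopf algebra $H$ write $P(H)$ for its primitives and $Q(H)=\bar H/\bar H^{2}$ for its indecomposables, and let $\lambda_{H}\colon P(H)\to Q(H)$ be the natural map (include $P(H)\subseteq\bar H$, then project). Unwinding the definitions, $H$ is primitive exactly when $\lambda_{H}$ is surjective, coprimitive exactly when $\lambda_{H}$ is injective, and hence \emph{biprimitive} exactly when $\lambda_{H}$ is an isomorphism; this is what I will verify for $H={}_0EE_0(A)$. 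Since dualizing interchanges the two filtrations, one has $E_0(A)^{*}\cong{}_0E(A^{*})$, and dualizing also interchanges the notions primitive and coprimitive; thus every assertion below about $E_0$ has a dual about ${}_0E$, and I only need to prove one member of each dual pair.

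First the two base cases. For the algebra filtration, $E_0(A)=\bigoplus_i F^{i}A/F^{i+1}A$ is generated as an algebra by its filtration-degree-one part $F^{1}A/F^{2}A=Q(A)$, because $F^{n}A=\bar A^{\,n}=(F^{1}A)^{n}$; and each element of that part is primitive in $E_0(A)$, since for a lift $y\in\bar A$ the cross terms of $\Psi(y)=1\otimes y+y\otimes 1+\sum_i y_i'\otimes y_i''$ lie in $F^{1}A\otimes F^{1}A$, i.e.\ in total filtration degree two, and are therefore killed in the associated graded. Hence $E_0(A)$ is \emph{primitively generated}, i.e.\ $\lambda_{E_0(A)}$ is surjective. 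Dualizing, ${}_0E(B)$ is \emph{coprimitive} for every connected associative Hopf algebra $B$; in particular ${}_0EE_0(A)={}_0E\bigl(E_0(A)\bigr)$ is already coprimitive, so $\lambda$ is injective for it and it only remains to see that ${}_0EE_0(A)$ is primitive.

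This is the heart of the matter: the preservation lemma that \emph{${}_0E$ sends primitive Hopf algebras to primitive Hopf algebras} (equivalently, dually, that $E_0$ preserves coprimitivity). Granting it and applying it to the primitive algebra $B=E_0(A)$ produced above shows that ${}_0EE_0(A)$ is primitive, which together with the coprimitivity already established yields biprimitivity and proves the theorem. To prove the lemma I would argue as follows. The coalgebra filtration $G^{\bullet}$ is the coradical filtration of the connected coalgebra underlying $B$; it is multiplicative, $G^{i}B\cdot G^{j}B\subseteq G^{i+j}B$, so ${}_0E(B)=\bigoplus_i G^{i}B/G^{i-1}B$ is a bigraded Hopf algebra whose filtration-degree-one part $G^{1}B/G^{0}B$ consists of primitives (for $x\in G^{1}B=\ker\bar\Psi_{1}$ the class $[x]$ satisfies $\bar\Psi([x])=0$). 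Because $B$ is primitively generated and associative, the tensor Hopf algebra $T(V)$ on $V=P(B)$, with $V$ primitive, surjects onto $B$ via a Hopf-algebra map $\pi$. A surjective coalgebra morphism is strict for the coradical filtration, so $\pi$ induces a surjection of graded Hopf algebras ${}_0E(\pi)\colon{}_0E(T(V))\to{}_0E(B)$; and the coradical filtration of $T(V)$ is the length filtration $G^{n}=\bigoplus_{m\le n}V^{\otimes m}$, so ${}_0E(T(V))\cong T(V)$ is generated by its primitive filtration-degree-one part $V$. A graded quotient of a Hopf algebra generated in filtration degree one is again generated in filtration degree one, and ${}_0E(\pi)$ carries this generating set into the primitive part $G^{1}B/G^{0}B$ of ${}_0E(B)$. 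Hence ${}_0E(B)$ is primitively generated, as claimed.

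The main obstacle is precisely this preservation lemma, and inside it the two coalgebra-theoretic inputs: that the filtration by the kernels $\ker\bar\Psi_{i}$ is multiplicative, and that a surjective coalgebra morphism is \emph{strict} for it, so that generation in filtration degree one descends to the associated graded. These are exactly the points where the associativity of $A$ (hence of $B=E_0(A)$) is genuinely used and where the only real bookkeeping lies; everything else is formal manipulation of the two filtrations together with the duality $E_0(A)^{*}\cong{}_0E(A^{*})$.
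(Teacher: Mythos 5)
First, a caveat: the paper does not prove this statement at all --- it is quoted from Browder \cite{Bro63} as Corollary 2.6 --- so there is no in-paper argument to compare with, and I am judging your proposal on its own merits against what Browder actually has to establish. Your skeleton is the right one and is essentially his: reformulate biprimitivity as bijectivity of $\lambda_H\colon P(H)\to Q(H)$; show that $E_0(A)$ is always primitively generated (your filtration-degree argument here is correct --- this is Browder's Proposition 1.3); dualize, using $E_0(A)^*\cong{}_0E(A^*)$ in the connected finite-type setting, to get that ${}_0E$ of anything is coprimitive; and reduce everything to the one non-formal assertion that ${}_0E$ carries primitively generated Hopf algebras to primitively generated ones.

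The gap is inside that last assertion, at exactly the two coalgebra-theoretic inputs you flag. The claim that a surjective coalgebra morphism is strict for the coradical filtration is false. Take $p=2$ and the injection of connected Hopf algebras $\ZZ_2[y]\hookrightarrow\ZZ_2[x]$, $y\mapsto x^2$, with $x$ primitive of even degree (so $x^2$ is primitive in characteristic $2$); its graded dual is a surjection $f\colon C\to D$ of divided power Hopf algebras with $f(\gamma_1)=0$ and $f(\gamma_2)=\eta$, where $\gamma_i$ is dual to $x^i$ and $\eta$ is dual to $y$. Here $\eta$ is primitive, so lies in $G^1D$, but every preimage of $\eta$ lies in $G^2C\setminus G^1C$ (since $\gamma_2$ pairs nontrivially with $x\cdot x$); hence $f(G^1C)\subsetneq G^1D$ and $f$ is not strict. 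Your map $\pi\colon T(V)\to B$ does at least surject on $G^1$, but you give no argument that this restores strictness in higher filtration degrees, and that is precisely where the content of the theorem sits: strictness of $\pi$ for the $G$-filtration is equivalent to the identity $G^nB=(G^1B)^n$, which is in turn equivalent to the primitivity of ${}_0E(B)$ you are trying to prove. A second, compounding error: the coradical filtration of $T(V)$ is \emph{not} the length filtration, because $T(V)$ has primitives of length at least $2$ (commutators of elements of $V$, and $p$-th powers of even-degree primitives), so ${}_0E(T(V))\not\cong T(V)$ and even the source of your comparison map is not the explicit object you describe; the reduction to the free case therefore does not make the problem easier. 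Some genuinely non-formal input --- in Browder's paper it comes from the structure theory of primitive Hopf algebras over $\ZZ_p$ built up in his Section 2 --- is needed to close this step.
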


Biprimitive Hopf algebras will play a role in the context of \emph{differential Hopf algebras}, that is, Hopf algebras endowed with a differential $d:A\to A$ of degree 1, such that the maps $\Phi:A\otimes A\to A$ and $\Psi:A\to A\otimes A$ become morphisms of differential algebras (where the differential on $A\otimes A$ is given by $d(x\otimes y)=(dx)\otimes y+(-1)^{\deg(x)}x\otimes (dy)$. We have the following:

\begin{thm}[\cite{Bro63}, Theorem 3.9]\label{T:biprimitive-hopf}
Let $(A,d)$ be a biprimitive differential Hopf algebra over $\ZZ_p$. Then, %as a differential Hopf algebra,
in the category of differential Hopf algebras, $(A,d)$ is isomorphic to a tensor product
\begin{equation}
\bigotimes_{i=1}^IK_i\otimes \bigotimes_{j=1}^JL_j\otimes Q
\end{equation}
where:
\begin{itemize}
\item  $K_i=\Lambda(y)\otimes \Z_p[z]/z^p$, where $y,z$ are primitive, $|y|$ odd and $d(y)=z$.
\item  $L_j=\Lambda(y')\otimes \Z_p[z']/z'^p$, where $y',z'$ are primitive, with $|y'|$ odd and $d(z')=y'$.
\item Q is primitively generated, and $d(Q)=0$.
\end{itemize}
\end{thm}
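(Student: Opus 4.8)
The plan is to push the entire problem down to the level of primitive elements, where the differential becomes elementary linear algebra, and then to lift the resulting splitting back up to the full differential Hopf algebra by means of the Borel--Hopf Theorem \ref{T:Borel-Hopf}. First I would record the compatibilities of $d$. Since $(A,d)$ is a differential Hopf algebra, $d$ is at once a derivation for $\Phi$ and a coderivation for $\Psi$; from $d(1\cdot 1)=2\,d(1)$ one gets $d(1)=0$, and for a primitive $x$ the identity $\Psi(dx)=(d\otimes 1+1\otimes d)\Psi(x)=1\otimes dx+dx\otimes 1$ shows that $dx$ is again primitive. Hence $d$ restricts to a differential of degree $+1$ on the graded vector space $P=P(A)$ of primitives, making $(P,d)$ a chain complex.

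Next I would extract the algebraic constraints forced by biprimitivity. For an even-degree primitive $z$ the element $z^p$ is again primitive: as $|z|$ is even, $z\otimes 1$ and $1\otimes z$ commute in $A\otimes A$, so $\Psi(z^p)=(z\otimes 1+1\otimes z)^p=z^p\otimes 1+1\otimes z^p$ by the freshman's dream in characteristic $p$ (the middle binomial coefficients vanish mod $p$). But $z^p$ is decomposable, whereas coprimitivity asserts that primitives are indecomposable; therefore $z^p=0$. For $p$ odd, graded commutativity already gives $y^2=0$ for every odd-degree primitive $y$. Consequently the only monogenic factors that can occur in a Borel--Hopf decomposition of $A$ are $\Lambda(y)$ with $|y|$ odd and $\ZZ_p[z]/z^p$ with $|z|$ even; polynomial factors are excluded by the very same $z^p=0$ argument, which is why biprimitivity pins the truncation height to exactly $p$.

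Now comes the heart of the matter. Using $P(A\otimes B)=P(A)\oplus P(B)$ together with the fact that the only primitives of $\Lambda(y)$ and of $\ZZ_p[z]/z^p$ are the generator lines, one checks that the generators produced by any Borel--Hopf splitting form a homogeneous basis of $P$; conversely, since $A$ is primitively generated and since biprimitivity identifies $P$ with the indecomposables, any homogeneous basis of $P$ should itself induce a tensor decomposition $A\cong\bigotimes_\alpha A_\alpha$ into such monogenic factors. I would therefore fix a $d$-\emph{adapted} homogeneous basis of $(P,d)$: a set of sources $x$ with $dx\neq 0$, their images $dx$ (automatically primitive), and representatives for the homology $H(P,d)$, which are $d$-closed. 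Pairing each source with its image, an odd source $y$ with even image $z=dy$ yields a factor $\Lambda(y)\otimes\ZZ_p[z]/z^p$ with $dy=z$, $dz=0$, i.e.\ a $K_i$; an even source $z'$ with odd image $y'=dz'$ yields $\Lambda(y')\otimes\ZZ_p[z']/z'^p$ with $dz'=y'$, i.e.\ an $L_j$; and the $d$-closed representatives generate a primitively generated factor $Q$ with $d|_Q=0$. Since $d$ is a derivation sending each chosen generator either to $0$ or to another chosen generator within the same source--image pair, the Leibniz rule shows that $d$ is precisely the tensor-product differential for this grouping, each two-generator block being a $d$-stable sub-differential-Hopf-algebra of the asserted type. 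This gives the isomorphism $(A,d)\cong\bigotimes_i K_i\otimes\bigotimes_j L_j\otimes Q$ of differential Hopf algebras.

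I expect the main obstacle to be the third step, namely proving that an arbitrary homogeneous basis of $P$ --- in particular a $d$-adapted one --- genuinely induces a Borel--Hopf tensor decomposition of $A$. Theorem \ref{T:Borel-Hopf} only guarantees the existence of \emph{some} such decomposition; realizing it on prescribed primitive generators is exactly where biprimitivity must be used in full, both that $P$ generates $A$ and that $P$ maps isomorphically onto the indecomposables. It is this identification that lets the purely linear-algebraic splitting of the complex $(P,d)$ lift cleanly, without correction terms, to a decomposition in the category of differential Hopf algebras.
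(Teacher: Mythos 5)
The paper offers no proof of this statement: it is quoted from Browder \cite{Bro63}, Theorem 3.9, so there is no internal argument to compare against. Your reconstruction is essentially sound, and its one acknowledged gap is in fact closed elsewhere in the paper. The first two steps are correct: $d$ preserves primitives because it is a coderivation, and coprimitivity kills $z^p$ for $z$ primitive of even degree; for $p=2$ you need the same primitive-and-decomposable argument to get $y^2=0$ for $|y|$ odd (graded commutativity no longer gives it for free), but the conclusion is unchanged since $\Lambda(y)=\ZZ_2[y]/y^2$. The step you flag as the main obstacle --- that an arbitrary homogeneous basis of $P(A)$, in particular a $d$-adapted one, realizes a splitting $A\cong\bigotimes_\alpha \ZZ_p[u_\alpha]/(u_\alpha^{d_\alpha})$ with $d_\alpha\in\{2,p\}$ --- is precisely the claim established inside the proof of Lemma \ref{L:growth}: biprimitivity identifies $P$ with the indecomposables, so the Borel--Hopf decomposition of Theorem \ref{T:Borel-Hopf} can be arranged with primitive generators of truncation height exactly $2$ or $p$, and re-basing within a fixed degree is harmless because the relevant ideal is generated by $p$-th powers and $x\mapsto x^p$ is additive in characteristic $p$. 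Granting that, your assembly is correct: each block generated by primitives is automatically a sub-Hopf-algebra, the multiplication map $\bigotimes_\alpha A_\alpha\to A$ is a coalgebra as well as an algebra isomorphism, and since the derivation $d$ sends each chosen generator either to $0$ or to its partner in the same source--image pair, it coincides with the tensor-product differential, yielding the $K_i$, $L_j$ and $Q$ factors as claimed. So, modulo importing the re-basing argument of Lemma \ref{L:growth}, your proof is complete and more self-contained than the paper's bare citation.
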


\section{A topological condition for an orbifold to be a manifold} \label{sec:recognition_thm}

The goal of this section is to provide an algebro-topological condition to determine when an orbifold is in fact a manifold. Most of the argument below (see Proposition \ref{P:when-orb-is-man}) also follows from a more general result \cite[Theorem 7.7, p.~568]{Qui71} due to Quillen. As a service to the reader, we provide a short and self-contained reasoning relevant to our purposes.

We shall draw on the following lemma which will help to identify a special system of local coefficients used in an $E_2$-page of a Leray--Serre spectral sequence in the proof of Proposition \ref{prprec}.
\begin{lem}\label{lemnil}
Given an orientable $n$-dimensional orbifold $\Orb$ and a subgroup $\ZZ_p\subseteq \SOr(n)$, in the fibration
\begin{align*}
\SOr(n)/\Zp\hookrightarrow
(F\Orb\times E\SOr(n))/\ZZ_p\to (F\Orb\times E\SOr(n))/\SOr(n)=B\Orb
\end{align*}
the monodromy action of the fundamental group of the base, $\pi_1(B\Orb)$, on the cohomology of the fibre, $H^*(\SOr(n)/\Zp;\Zp)$, is trivial.
\end{lem}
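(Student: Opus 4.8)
The plan is to recognize the displayed fibration as the bundle associated to a principal $\SOr(n)$-bundle and then to exploit the connectedness of $\SOr(n)$. Write $X:=F\Orb\times E\SOr(n)$. The diagonal $\SOr(n)$-action on $X$ is free, since it is already free on the factor $E\SOr(n)$, so $X\to X/\SOr(n)=B\Orb$ is a principal $\SOr(n)$-bundle. First I would check that the projection $X/\Zp\to B\Orb$ is nothing but the associated bundle $X\times_{\SOr(n)}\bigl(\SOr(n)/\Zp\bigr)$, with $\SOr(n)$ acting on the fibre $\SOr(n)/\Zp$ by left translation. The assignment $[x,h\Zp]\mapsto [x\cdot h]$ furnishes the identification, and one verifies directly that it is well defined; note in particular that left translation descends to $\SOr(n)/\Zp$ regardless of whether $\Zp$ is normal, which is the only place where normality could have been an issue.

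Next I would compute the holonomy of this associated bundle explicitly. Given a loop $\gamma$ in $B\Orb$ based at $b_0$, lift it through the principal bundle $X\to B\Orb$ to a path starting at a point $x_0$ in the fibre over $b_0$; its endpoint differs from $x_0$ by a unique element $g_\gamma\in\SOr(n)$. Parallel transport along $\gamma$ in the associated bundle is then given by left translation $L_{g_\gamma}\colon \SOr(n)/\Zp\to\SOr(n)/\Zp$, so the monodromy of $[\gamma]\in\pi_1(B\Orb)$ on $H^*(\SOr(n)/\Zp;\Zp)$ is precisely the induced map $L_{g_\gamma}^*$.

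Finally, I would invoke connectedness. Since $\SOr(n)$ is path-connected, $g_\gamma$ can be joined to the identity by a path $c$, and $t\mapsto L_{c(t)}$ is a homotopy from the identity to $L_{g_\gamma}$ on $\SOr(n)/\Zp$. Hence $L_{g_\gamma}^*=\mathrm{id}$ on cohomology, and the monodromy action is trivial. Conceptually this is just the statement that the local system factors as $\pi_1(B\Orb)\xrightarrow{\varphi_*}\pi_1(B\SOr(n))=\pi_0(\SOr(n))=0$.

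I expect the only genuine obstacle to be the bookkeeping in the first step — making the identification of $X/\Zp$ with the associated bundle precise and pinning down the holonomy formula with the correct variance — whereas the homotopy argument in the last step is immediate. Alternatively, one can bypass the explicit holonomy entirely by observing that the fibration is pulled back along $\varphi\colon B\Orb\to B\SOr(n)$ from the universal $\SOr(n)/\Zp$-bundle $E\SOr(n)/\Zp\to B\SOr(n)$; its base is simply connected because $\pi_1(B\SOr(n))=\pi_0(\SOr(n))=0$, so the associated local system over $B\SOr(n)$ is trivial, and therefore so is its pullback to $B\Orb$.
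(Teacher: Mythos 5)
Your proposal is correct and follows essentially the same route as the paper: the paper also lifts the loop through the principal $\SOr(n)$-bundle $F\Orb\times E\SOr(n)\to B\Orb$, identifies the monodromy on the fibre $\SOr(n)/\Zp$ as translation by the resulting holonomy element, and concludes by connectedness of $\SOr(n)$. The only cosmetic difference is that the paper writes out the explicit lift $\hat{\Gamma}(gH,t)=[g^{-1}\cdot\bar{\gamma}(t)]_H$ rather than invoking the general associated-bundle formalism, and it phrases the argument for a general connected compact Lie group $G$ and discrete subgroup $H$.
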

\begin{proof}
We prove the result in slightly larger generality replacing $\SOr(n)$ by a connected, compact Lie group $G$, $\ZZ_p$ by a discrete group $H\subseteq G$ and $F\mathcal{O}$ by a connected $G$-CW-complex $X$. In this case we have the fibration
\begin{align*}
(X\times EG)/H= X_H \rightarrow  (X\times E G)/G=X_{G}
\end{align*}

We will denote elements of $X_H$ by $[x,v]_H$ for $x\in X$ and $v\in EG$. Similarly, the elements in $X_G$ will be denoted $[x,v]_G$. Clearly, we have $[x,v]_G=[g^{-1}\cdot x,g^{-1}\cdot v]_G$ for any $g\in G$, and $[x,v]_H=[h^{-1}\cdot x,h^{-1}\cdot v]_H$ for any $h\in H$.

We claim that for any element $[\gamma]\in \pi_1(X_{G})$, its action on $G/H$ is homotopic to the identity, and thus its action on $H^*(G/H)$ is trivial.

Fix $[\gamma]\in \pi_1(X_G)$, and a curve $\gamma:I\to X_G$ representing $[\gamma]$. Consider the principal bundle
\begin{align*}
G\to X\times EG\to X_G.
\end{align*}
Let $\bar{\gamma}:I\to X\times EG$ denote a lift of $\gamma$, and let $\bar\gamma(0)=(x_0,v_0)$, $\bar\gamma(1)=g_0^{-1}\cdot\bar{\gamma}(0)$.

Consider now the bundle $G/H\to X_H\to X_G$. Let $\Gamma:G/H\times I\to X_G$ be the map $\Gamma(gH,t)=\gamma(t)$, and take the lift $\hat{\Gamma}_0:G/H\to X_H$ of $\Gamma$ at $t=0$ given by $\hat{\Gamma}_0(gH)=[g^{-1}\cdot \bar{\gamma}(0)]_H$ (which identifies $G/H$ with the fiber at $\gamma(0)$). This map can be extended to
\begin{align*}
\hat{\Gamma}:G/H\times I&\to X_H\\
(gH,t)&\mapsto [g^{-1}\cdot \bar{\gamma}(t)]_H
\end{align*}
It is easy to check that this map is well defined, is a lift of $\Gamma$, and that at $t=1$ it is given by
\[
\hat{\Gamma}_1(gH)=[g^{-1}\cdot \bar{\gamma}(1)]_H=[g^{-1}g_0^{-1}\bar{\gamma}(0)]_H=\hat{\Gamma}_0(g_0gH).
\]
In particular, the action of $[\gamma]\in \pi_1(X_G)$ on $G/H$, which is defined by $\hat{\Gamma}_1$, is given by left multiplication by $g_0$. Since $G$ is connected, this action is clearly homotopic to the identity, and the proposition is proved.

\end{proof}

\begin{rem}
For a non-orientable orbifold $\Orb$ and a subgroup $\ZZ_p\subseteq \SOr(n)$ the monodromy action of $\pi_1(B\Orb)$ on the cohomology of the fiber of
\begin{align*}
\Or(n)/\Zp\hookrightarrow
(F\Orb\times E\Or(n))/\ZZ_p\to (F\Orb\times E\Or(n))/\Or(n)=B\Orb
\end{align*}
is nontrivial (here $F \Orb$ denotes the bundle of all orthonormal frames). Indeed, a loop in $B \Orb$ can be lifted to a loop in $F\Orb_{\ZZ_p}$ that starts and ends in the same fiber component, if and only if it lifts to a loop in the orientable cover $F\Orb_{\SOr(n)}$ of $B \Orb$.

\end{rem}

\begin{prp}\label{prprec}\label{P:when-orb-is-man}
An orientable, compact Riemannian $n$-orbifold $\Orb$ is a manifold, if and only if the orbifold Stiefel Whitney classes $\mathsf{w}_i(\Orb)\in H^i_{orb}(\Orb;\ZZ_2)$ and the orbifold Pontryagin classes $\mathsf{p}_i(\Orb)\in H^{4i}_{orb}(\Orb;\ZZ_p)$ (for every $p$ odd prime) are nilpotent.
\end{prp}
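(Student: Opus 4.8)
The plan is to convert the geometric statement ``$\Orb$ is a manifold'' into the finite-dimensionality of $H^*_{\orb}(\Orb;\ZZ_p)$ for every prime $p$, and then to read off the latter from the characteristic classes using the module structure of Proposition \ref{P:orb-coh-finitely-gen}. Working with the oriented frame bundle $F\Orb$, the orbifold $\Orb$ is a manifold if and only if $\SOr(n)$ acts freely on $F\Orb$; since the action is almost free and every nontrivial finite isotropy group contains an element of prime order, this is equivalent to requiring that no subgroup $\ZZ_p\subseteq\SOr(n)$, for any prime $p$, fixes a point of $F\Orb$. The forward implication is then immediate: if $\SOr(n)$ acts freely, the fibre of $B\Orb=(F\Orb\times E\SOr(n))/\SOr(n)\to F\Orb/\SOr(n)=\Orb$ is the contractible space $E\SOr(n)$, so $B\Orb\simeq\Orb$ is homotopy equivalent to a compact manifold and $H^*_{\orb}(\Orb;\K)$ is finite-dimensional for every field $\K$; as $\mathsf{w}_i(\Orb)$ and $\mathsf{p}_i(\Orb)$ have positive degree, their high powers vanish for dimension reasons and the classes are nilpotent.

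For the converse I argue by contraposition. Supposing $\Orb$ is not a manifold, I fix a prime $p$ and a subgroup $\ZZ_p\subseteq\SOr(n)$ fixing some $z\in F\Orb$. Sending $[v]\mapsto[z,v]$ defines a section of the fibre bundle $F\Orb\to F\Orb_{\ZZ_p}\to B\ZZ_p$, so $H^*(B\ZZ_p;\ZZ_p)$ is a retract of $H^*(F\Orb_{\ZZ_p};\ZZ_p)$ and the latter is infinite-dimensional. I then pass to the base through the fibration of Lemma \ref{lemnil},
\[
\SOr(n)/\ZZ_p\hookrightarrow F\Orb_{\ZZ_p}\to B\Orb,
\]
whose Leray--Serre spectral sequence, by that lemma, has untwisted $E_2$-page $E_2^{s,t}=H^s(B\Orb;\ZZ_p)\otimes H^t(\SOr(n)/\ZZ_p;\ZZ_p)$. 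The fibre is a compact manifold, so has finite-dimensional cohomology; were $H^*(B\Orb;\ZZ_p)$ finite-dimensional too, the entire $E_2$-page, and hence $H^*(F\Orb_{\ZZ_p};\ZZ_p)$, would be finite-dimensional, contradicting the previous step. Thus $H^*_{\orb}(\Orb;\ZZ_p)$ is infinite-dimensional.

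It remains to turn this into a non-nilpotent characteristic class. By Proposition \ref{P:orb-coh-finitely-gen}, $H^*_{\orb}(\Orb;\ZZ_p)$ is finitely generated over the Noetherian polynomial ring $H=H^*(B\SOr(n);\ZZ_p)$, the module structure being cup product with the pullbacks $\varphi^*(g)$ of generators $g\in H$ along the classifying map $\varphi:B\Orb\to B\SOr(n)$. If every generator pulled back to a nilpotent element, then a suitable power of each would annihilate the module, making it finite-dimensional over the finite-dimensional quotient of $H$ by those powers---a contradiction. So some generator has non-nilpotent pullback. For $p=2$ the generators of $H=\ZZ_2[\mathsf{w}_2,\dots,\mathsf{w}_n]$ pull back to the $\mathsf{w}_i(\Orb)$, one of which is therefore non-nilpotent. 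For odd $p$ the generators are the Pontryagin classes together with the Euler class $e$ when $n$ is even; since the top orbifold Pontryagin class factors as $\mathsf{p}_{n/2}(\Orb)=e(\Orb)^2$, nilpotency of all $\mathsf{p}_i(\Orb)$ would force $e(\Orb)$ nilpotent as well, so again some $\mathsf{p}_i(\Orb)$ is non-nilpotent.

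I expect the two transfer steps to be the delicate part. The spectral-sequence comparison collapses to a clean dimension count only because Lemma \ref{lemnil} supplies untwisted coefficients, and the final module argument hinges on the observation that, although the Euler class is not itself an orbifold Pontryagin class, it is nonetheless controlled through the relation $\mathsf{p}_{n/2}=e^2$ in even dimensions; without this one could imagine nilpotent Pontryagin classes coexisting with infinite-dimensional $\ZZ_p$-cohomology. Verifying that the orbifold Pontryagin classes really do control \emph{all} generators of $H^*(B\SOr(n);\ZZ_p)$ is thus the point that most needs care.
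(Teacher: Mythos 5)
Your proof is correct and follows essentially the same route as the paper's: reduce nilpotency of the characteristic classes to finite-dimensionality of $H^*_{\orb}(\Orb;\ZZ_p)$ via Proposition \ref{P:orb-coh-finitely-gen}, rephrase in terms of freeness of the $\SOr(n)$-action on $F\Orb$, and in the non-free case combine the section of $F\Orb_{\ZZ_p}\to B\ZZ_p$ coming from a fixed point with the Leray--Serre spectral sequence of the $\SOr(n)/\ZZ_p$-bundle over $B\Orb$, untwisted by Lemma \ref{lemnil}. Your explicit handling of the Euler class generator of $H^*(B\SOr(n);\ZZ_p)$ for $n$ even via $\mathsf{p}_{n/2}=e^2$ is a detail the paper leaves implicit, and it is correctly resolved.
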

\begin{proof}
We begin by turning the proposition into another, equivalent one. First of all, from Proposition \ref{P:orb-coh-finitely-gen} the orbifold Stiefel--Whitney and Pontryagin classes are all nilpotent if and only if the orbifold cohomology $H^*_{\orb}(\Orb;\ZZ_p)$ is finite dimensional for all primes $p$. It is then enough to prove that an orbifold $\Orb$ is a manifold, if and only if $H^*_{\orb}(\Orb;\ZZ_p)$ is finite dimensional. Secondly, $\Orb$ is a manifold if and only if the action of $\SOr(n)$ on the frame bundle $F\Orb$ is free. Since the orbifold cohomology $H^*_{\orb}(\Orb;\ZZ_p)$ is nothing but the $\SOr(n)$-equivariant cohomology $H^*_{\SOr(n)}(F\Orb;\ZZ_p)$, we can thus rephrase the statement of the proposition in terms of the frame bundle:
\begin{center}
The $\SOr(n)$-action on $F\Orb$ is free, if and only if $H^*_{\SOr(n)}(F\Orb;\ZZ_p)$ is finite dimensional for every prime $p$.
\end{center}

Clearly if the $\SOr(n)$-action is free, then $F\Orb/\SOr(n)$ is a manifold and $H^*_{\SOr(n)}(F\Orb;\ZZ_p)=H^*(F\Orb/{\SOr(n)};\ZZ_p)$ is finite for every prime $p$. Suppose now that the $\SOr(n)$ action is \emph{not} free. Then there is some (compact) isotropy group $H$, which contains some finite cyclic subgroup $\ZZ_p$, for some prime $p$. Thus, there exists some $\ZZ_p\subseteq \SOr(n)$ with nonempty fixed point set, $(F\Orb)^{\ZZ_p}$. Fixing such a $\ZZ_p\subseteq \SOr(n)$, consider the corresponding equivariant cohomology $H^*_{\ZZ_p}(F\Orb;\ZZ_p)$, i.e. the cohomology
of the total space of the bundle
\begin{align*}
F\Orb \to F\Orb_{\ZZ_p}=(F\Orb \times E \SOr(n) )/\ZZ_p \xrightarrow{p} B \ZZ_p.
\end{align*}
The existence of a fixed point $x\in F\Orb$ of the $\ZZ_p$-action yields a section of this bundle given by $s(b)=[x,b]_G$, where we use the notation of Lemma \ref{lemnil}. Hence in this case the induced morphism on cohomology $\operatorname{id}=H^*(p\circ s)=H^*(s)\circ H^*(p)$ yields that $H^*(p)$ is injective. In particular, in this case $H^*_{\ZZ_p}(F\Orb,\ZZ_p)$ is infinite dimensional as $H^*(B \ZZ_p,\ZZ_p)$ is so.

Next, the natural map
\begin{align*}
(F\Orb\times E\SOr(n))/\ZZ_p\to (F\Orb\times E\SOr(n))/\SOr(n)=B\Orb
\end{align*}
is an $\SOr(n)/\ZZ_p$-bundle. Due to Lemma \ref{lemnil} the action of $\pi_1(B\Orb)$ on $H^*(\SOr(n)/\ZZ_p;\ZZ_p)$ is trivial, and thus the Leray--Serre spectral sequence satisfies
\begin{align*}
E_2^{p,q}=H^p_{\SOr(n)}(F\Orb,H^q(\SOr(n)/\ZZ_p;\ZZ_p))
\end{align*}
and converges to $H^{p+q}_{\ZZ_p}(F\Orb;\ZZ_p)$.

As noted above, $H^{*}_{\ZZ_p}(F\Orb;\ZZ_p)$ has infinite cohomological dimension. We claim that this implies that $H^{*}_{\SOr(n)}(F\Orb;\ZZ_p)$ has infinite dimension as well (thus proving the proposition). We prove the claim by contradiction: if $H^{*}_{\SOr(n)}(F\Orb;\ZZ_p)$ had finite cohomological dimension, then the $E_2$-page of the spectral sequence would have finite dimension, and therefore the same would be true for the $E_{\infty}$-page. This is a contradiction, since the spectral sequence converges to  $H^{*}_{\ZZ_p}(F\Orb;\ZZ_p)$ for which we showed that it is infinite dimensional.
\end{proof}

\begin{rem}
By Quillen's result \cite[Theorem 7.7, p.~568]{Qui71} and the above reasoning, the fact that $\Orb$ is a manifold if and only if $H^*_{\orb}(\Orb;\ZZ_p)$ is finite dimensional for all primes $p$ also holds for non-orientable $\Orb$.
\end{rem}

\section{Orbifold loop space, and Morse theory} \label{sec:orbi_loop_morse}

The goal of this section is to reduce the proof of Theorem \ref{T:main-theorem} to the simply connected case, and to prove the following:

\begin{thm}
For every Besse orbifold $\Orb$, the loop space $\Omega(B\Orb)$ has uniformly bounded Betti numbers, with respect to any field of coefficients.
\end{thm}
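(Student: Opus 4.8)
The plan is to prove the bound by Morse theory on a loop space built from the frame bundle, following Bott's classical analysis for Besse manifolds but arranged so as to see the orbifold structure. First I would model $\Omega(B\Orb)$ geometrically. Since $B\Orb=(F\Orb\times E\SOr(n))/\SOr(n)$ is the Borel construction for the almost free, isometric $\SOr(n)$-action on the \emph{manifold} $F\Orb$, a based loop in $B\Orb$ is encoded up to homotopy by a path in $F\Orb$ from a fixed frame $x_0\in\pi^{-1}(p)$ to the orbit $\pi^{-1}(p)=\SOr(n)\cdot x_0$ over a regular point $p$. Letting $\SOr(n)$ translate the starting point identifies the space $\Omega^{\orb}_{p,p}\Orb$ of paths in $F\Orb$ with both endpoints free in $\pi^{-1}(p)$ with $\SOr(n)\times\Omega(B\Orb)$, and since $F\Orb$ is connected this is homotopy equivalent to $\Omega^{\orb}_{p,q}\Orb$ for any regular $p,q$. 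By the K\"unneth formula $b_i(\Omega(B\Orb);F)\le b_i(\Omega^{\orb}_{p,q}\Orb;F)$ for every field $F$, so it suffices to bound the Betti numbers of $\Omega^{\orb}_{p,q}\Orb$ uniformly.

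Next I would run Morse theory for the energy $E(\gamma)=\int_0^1\bigl|\tfrac{d}{dt}(\pi\circ\gamma)\bigr|^2\,dt$ on $\Omega^{\orb}_{p,q}\Orb$. Its critical points are exactly the frame lifts of geodesics in $\Orb$ from $p$ to $q$, and completeness of the Besse orbifold yields the Palais--Smale condition. Choosing $p\neq q$ generically makes the critical set a disjoint union of nondegenerate critical submanifolds $N_{c_k}$ (Morse--Bott), each fibering with $\SOr(n)$-fibers over the space of $p$-to-$q$ geodesics of a fixed length. Over any field $F$ the Morse inequalities then give
\[
\sum_i b_i\bigl(\Omega^{\orb}_{p,q}\Orb;F\bigr)\,t^i \;\le\; \sum_k t^{\lambda(N_{c_k})}\,P\bigl(N_{c_k};F,t\bigr),
\]
where $\lambda(N_{c_k})$ is the Morse index and $P(N_{c_k};F,t)$ the Poincar\'e polynomial.

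The decisive input is the orbifold version of Wadsley's theorem (Appendix \ref{APP:Wadsley}), which furnishes a common period for all geodesics. This organizes the geodesics from $p$ to $q$ into towers of iterates occurring at a discrete, increasing sequence of energy levels $c_1<c_2<\cdots$; Bott's iteration analysis, adapted to the orbifold setting, then shows that the indices $\lambda(N_{c_k})$ grow at least linearly in $k$, while the submanifolds $N_{c_k}$ realize only finitely many homeomorphism types and hence have uniformly bounded total Betti number. Consequently, for each fixed degree $i$ only a uniformly bounded number of levels (those with $\lambda(N_{c_k})\le i$) contribute to the right-hand side, and each contributes a bounded amount. Since the count of contributing submanifolds is a topological quantity independent of $F$, this yields a uniform bound $b_i\le C$ with $C$ independent of both $i$ and the field.

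The hard part will be the geometric analysis at geodesics that pass through singular points of $\Orb$. There the energy functional need not be smoothly Morse--Bott, the critical submanifolds acquire contributions from the local groups $\Gamma_x$, and Bott's index recursion must be reestablished in the presence of orbifold holonomy. Showing that these singular strata still contribute only boundedly and that the linear index growth is preserved is the technical heart of the argument; passing to $F\Orb$ is precisely what makes this feasible, as the relevant curves become honest horizontal geodesics in a smooth manifold carrying an isometric $\SOr(n)$-action.
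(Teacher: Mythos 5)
Your outline follows the same strategy as the paper (Morse theory on a loop space modeled in the frame bundle, weak equivalence with $\Omega B\Orb$, Wadsley's theorem supplying a common period, linear index growth), but the two steps that actually carry the proof are asserted rather than established, and they are precisely where the content lies. First, ``Bott's iteration analysis, adapted to the orbifold setting, shows that the indices grow at least linearly'' is not something you can cite: Bott's iteration formula is not available off the shelf here, and the paper replaces it by a concrete argument. Namely, by Wadsley every geodesic issuing from the regular point $p$ returns to $p$ at time $L$, so every orbifold Jacobi field $J$ along such a geodesic with $J(0)=0$, $J'(0)\perp c'(0)$ satisfies $J(L)=0$; hence every length-$L$ geodesic loop at $p$ has nullity $n-1$, the maximum possible, and since the index can only jump where the nullity jumps, all length-$L$ loops at $p$ have one and the same index $C$. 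This single constant is what makes the index of the $m$-fold prolongation equal to $m(C+n-1)+\ind_I^{\orb}(c_i)$ and forces the indices of the geodesics of length in $(mL,(m+1)L)$ into the window $[m(C+n-1),(m+1)(C+n-1)]$. Second, you need to know that there are only boundedly many geodesics per index value; the paper gets exactly $k$ per window because the geodesics of length in $(mL,(m+1)L)$ are precisely the prolongations $c_1^{(m)},\ldots,c_k^{(m)}$ of the finitely many geodesics of length $\le L$ from $p$ to $q$. Your statement that the critical submanifolds ``realize only finitely many homeomorphism types and hence have uniformly bounded total Betti number'' is a stand-in for this count and is not justified as written.

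A second, more structural point: the Morse--Bott framework you set up is avoidable, and avoiding it dissolves the difficulty you flag at the end. The paper pins the terminal endpoint to a single frame $z\in\pi^{-1}(q)$ (rather than leaving both endpoints free on fibers) and chooses $q$ not conjugate to $p$ along any geodesic, via Sard's theorem applied to the normal exponential map of $\pi^{-1}(p)$ in the manifold $F\Orb$. Then every critical point of the energy is a nondegenerate horizontal geodesic in the smooth manifold $F\Orb$, one obtains an honest CW structure with one cell per geodesic (Corollary \ref{C:CW-Omega}), and there are no critical submanifolds, no $\SOr(n)$-fibers of degenerate directions, and no separate analysis at singular points of $\Orb$: the index comparison between horizontal geodesics upstairs and orbifold geodesics downstairs is handled once and for all by Proposition \ref{P:lift-geods}. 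Your closing paragraph identifies the singular-stratum analysis as ``the technical heart,'' but in the paper's setup that heart is empty; the actual heart is the nullity argument above, which your proposal does not contain.
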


From now on, for sake of notation we will denote $\Omega(B\Orb)$ by $\Omega B\Orb$.

\subsection{The orbifold loop space}
Let us start by defining a loop space of $\Orb$, which incorporates the information about the orbifold structure. An equivalent definition in the language of groupoids is given in \cite{GuHa}.

\begin{dfn}
Let $\Orb$ be an orbifold with frame bundle $\pi:F\Orb\to \Orb$. Given two points $p,q\in \Orb$, with $p$ regular, and fixing $z\in \pi^{-1}(q)$, define the \emph{orbifold loop space} as
\[
\Omega_{p,z}^\orb\Orb=\{\gamma:I\to F\Orb\mid \gamma\textrm{ piecewise smooth, }\gamma(0)\in \pi^{-1}(p), \gamma(1)=z\}
\]
with the compact-open topology.
\end{dfn}

Notice that, changing $z$ with $z'=g\cdot z$ for some $g\in \Or(n)$, we have $\Omega_{p,z'}^\orb\Orb=g\cdot \Omega_{p,z}^\orb\Orb$, where $g$ acts on curves $\gamma:I\to F\Orb$ point-wise, by $(g\cdot \gamma)(t)=g\cdot(\gamma(t))$. In particular, there is a homeomorphism $\Omega_{p,z'}^\orb\Orb\simeq \Omega_{p,z}^\orb\Orb$, and we call either of them simply by $\Omega_{p,q}^\orb\Orb$.

An alternative definition for the orbifold loop space is
\[
\tilde{\Omega}^\orb_{p,q}\Orb=\{\gamma\in C^0(I, F\Orb)\mid \gamma(0)\in \pi^{-1}(p), \gamma(1)=z\}.
\]
The two definitions are different, however it is a standard fact that the inclusion $\Omega_{p,q}^\orb\Orb\to \tilde{\Omega}^\orb_{p,q}\Orb$ is a homotopy equivalence. This is proved, for example, with great detail in \cite{Mil}, Theorem 17.1.

Moreover, $\tilde{\Omega}^\orb_{p,q}\Orb$ has the homotopy type of a CW-complex (cf. \cite{Mil57}). Standard Morse Theory gives a more precise description of the CW-complex structure: let us equip $F\Orb$ with a Riemannian metric $g_F$, such that $\Or(n)$ acts by isometries, and the projection $\pi: F\Orb\to \Orb$ is a Riemannian submersion as discussed in Section \ref{sub:orb_bundles}. Moreover, let $E:\Omega_{p,q}^\orb\Orb\to \RR$ be the energy functional taking $\gamma:[0,1]\to F\Orb$ to
\[
E(\gamma)=\int_0^1\|\gamma'(t)\|^2_{g_F}dt.
\]

Standard variation arguments show that the critical points of $E$ are geodesics $\gamma$ from $\pi^{-1}(p)$ to $z$, with $\gamma'(0)$ perpendicular to $\pi^{-1}(p)$. Moreover, for any such critical point, $\gamma$, its index $\ind_I(\gamma)$ (number of negative eigenvalues of the Hessian of E at $\gamma$) coincides with the number of focal points to $\pi^{-1}(p)$ along $\gamma$ in $(0,1)$ counted with multiplicity, and the nullity (i.e. dimension of the kernel of the Hessian of $E$ at $\gamma$) coincides with the multiplicity of $\gamma(1)$ as a focal point of $\pi^{-1}(p)$ along $\gamma$. For $z$ generic, the nullity of $\gamma$ is zero for every critical point $\gamma$ of $E$, and standard Morse Theory (\cite{Mil}, Thm. 17.3) shows that $\Omega_{p,q}^\orb\Orb$ is homotopy equivalent to a CW-complex, with a $d$-dimensional cell attached for every critical point $\gamma$ of index $d$.
\\

The first result is to show that the information about the critical points of $E$ and their index can be read off from the orbifold geometry of $\Orb$.

Recall  that on Riemannian orbifolds one can still define geodesics and Jacobi fields: a curve $c:I\to \Orb$ is an \emph{orbifold geodesic} if, whenever $c|_{[a,b]}$ is contained in a good open set $U\subset \Orb$ covered by the Riemannian manifold $\tilde{U}\stackrel{\pi_U}{\rightarrow}U$, there exists a (manifold) geodesic $\tilde{c}:[a,b]\to \tilde{U}$ such that $\pi_U\circ \tilde{c}=c$. Similarly, an \emph{orbifold Jacobi field} along an orbifold geodesic $c$ is a section $J$ of $c^*T\Orb$ such that, whenever $c|_{[a,b]}$ is contained in a good open set $U\subset \Orb$ and is covered by a geodesic $\tilde{c}:I\to \tilde{U}$, there is a Jacobi field $\tilde{J}:[a,b]\to \tilde{c}^*T\tilde{U}$ such that $(\pi_U)_*\tilde{J}=J$. Equivalently, orbifold Jacobi fields can be described by variations of orbifold geodesics (although one needs more care with this definition).

Just as in the manifold case, given an orbifold geodesic $c:I\to \Orb$ then $c(t_0)$, $t_0\in I$, is \emph{conjugate to $c(0)$ along $c$} if there exists an orbifold Jacobi field $J$ along $c$ such that $J(0)=J(t_0)=0$. In this case, the multiplicity of $c(t_0)$ is the dimension of the space of such Jacobi fields. Finally, given an orbifold geodesic segment $c:I\to \Orb$, we define the \emph{orbifold index} of $c$, $\ind_I^{\textrm{orb}}(c)$ to be the number of orbifold conjugate points of $c(0)$ along $c$, counted with multiplicity.

\begin{prp}\label{P:lift-geods}
The projection $\pi:F\Orb\to \Orb$ induces a bijective correspondence between critical points $\gamma\in \Omega^\orb_{p,q}\Orb$ of the energy functional, and orbifold geodesics $c:I\to \Orb$ from $p$ to $q$. Moreover, if $\gamma$ and $c$ are in such a correspondence, then the index of $\gamma$, $\ind_I(\gamma)$, equals the orbifold index of $c$, $\ind_I^{\textrm{orb}}(c)$.
\end{prp}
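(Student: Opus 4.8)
The plan is to prove the two assertions separately, using throughout that $\pi:F\Orb\to\Orb$ is an orbifold Riemannian submersion and that $\Or(n)$ acts almost freely and isometrically on $(F\Orb,g_F)$ with the fibres $\pi^{-1}(x)$ as its orbits. For the bijection, recall first that a critical point $\gamma$ of $E$ is a geodesic of $(F\Orb,g_F)$ with $\gamma(1)=z$ and $\gamma'(0)\perp\pi^{-1}(p)$; since the fibres are exactly the $\Or(n)$-orbits, this transversality condition says that $\gamma'(0)$ is horizontal. Because $\pi$ factors locally as $V\xrightarrow{\tilde{\pi}}\tilde{U}\xrightarrow{\pi_U}U$ with $\tilde{\pi}$ an honest Riemannian submersion, a geodesic of $F\Orb$ that is horizontal at $t=0$ is horizontal everywhere and projects to an orbifold geodesic; hence $c:=\pi\circ\gamma$ is an orbifold geodesic from $p$ to $q$. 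Conversely, given such a $c$, I would reverse it and take its unique horizontal lift issuing from $z$, then reverse again, producing a horizontal geodesic $\gamma$ with $\gamma(1)=z$ and $\gamma(0)\in\pi^{-1}(p)$, i.e.\ a critical point, with $\pi\circ\gamma=c$. Uniqueness of horizontal lifts shows the two assignments are mutually inverse, and everything takes place in the manifold $F\Orb$, so passing through singular points of $\Orb$ causes no difficulty.

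For the index equality it suffices to prove, for each $t_0\in(0,1)$, that the multiplicity of $\gamma(t_0)$ as a focal point of $P:=\pi^{-1}(p)$ along $\gamma$ equals the multiplicity of $c(t_0)$ as an orbifold conjugate point of $p$ along $c$; summing over $t_0$ then yields $\ind_I(\gamma)=\ind_I^{\mathrm{orb}}(c)$. I would identify the focal multiplicity with $\dim\ker d(\exp^\perp_P)$ at $t_0\gamma'(0)\in\nu(P)$ and the orbifold conjugate multiplicity with $\dim\ker d(\exp^{\orb}_p)$ at $t_0c'(0)\in T_p\Orb$, and then compare the normal exponential map upstairs with the orbifold exponential map downstairs. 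Since $p$ is regular, $P$ is a free $\Or(n)$-orbit and $d\pi$ identifies each normal space with $T_p\Orb$, so equivariance of $\exp^\perp_P$ trivialises $\nu(P)\cong\Or(n)\times T_p\Orb$; on the slice $\{e\}\times T_p\Orb$ one has $\pi\circ\exp^\perp_P(e,v)=\exp^{\orb}_p(v)$ because horizontal geodesics project to orbifold geodesics.

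Writing a tangent vector at $(e,t_0\gamma'(0))$ as $(\xi,w)\in\mathfrak{so}(n)\times T_p\Orb$, the differential takes the form $d\exp^\perp_P(\xi,w)=X_\xi(\gamma(t_0))+J_w(t_0)$, where $X_\xi$ is the Killing field of $\xi$ (vertical along $\gamma$, since the orbits are the fibres) and $J_w$ is the Jacobi field with $J_w(0)=0$ and $J_w'(0)=\widehat{w}$, the horizontal lift of $w$. Applying $d\pi$ kills the vertical term and sends $J_w(t_0)$ to $\bar{J}_w(t_0)$, where $\bar{J}_w$ is the orbifold Jacobi field along $c$ with $\bar{J}_w(0)=0$ and $\bar{J}_w'(0)=w$; this yields a linear map $\ker d\exp^\perp_P\to\ker d\exp^{\orb}_p$, $(\xi,w)\mapsto w$. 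I would show it is an isomorphism: it is injective because $X_\xi(\gamma(t_0))=0$ forces $\xi=0$ by almost-freeness, and surjective because for any $w$ with $\bar{J}_w(t_0)=0$ the vector $J_w(t_0)$ is vertical, hence equals $X_\eta(\gamma(t_0))$ for a unique $\eta$ (the Killing fields span the vertical space at $\gamma(t_0)$, as $\Or(n)$ acts transitively on the fibre with finite isotropy), so that $(-\eta,w)$ lies in the kernel. Equality of the two kernel dimensions is exactly the equality of multiplicities.

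The main obstacle is precisely this last comparison: the focal condition upstairs refers to the whole fibre $P$, whereas the conjugate condition downstairs refers to the single point $p$, and the entire discrepancy is carried by the vertical directions, i.e.\ by the Killing fields of the $\Or(n)$-action. The delicate part is to verify that these vertical Jacobi fields neither create spurious focal points nor are undercounted, and that the argument persists when $c(t_0)$ is a \emph{singular} point of $\Orb$, where one must invoke finiteness of the isotropy to know that the Killing fields still span the vertical space at $\gamma(t_0)$. Everything else reduces to the standard Riemannian-submersion behaviour of horizontal geodesics and their Jacobi fields, applied in the manifold $F\Orb$.
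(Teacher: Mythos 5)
Your argument is correct and follows essentially the same route as the paper's proof: the bijection comes from horizontal lifting through the Riemannian submersion $\pi$, and the index comparison rests on splitting the $P$-Jacobi fields along $\gamma$ into the nowhere-vanishing action fields of the almost-free $\Or(n)$-action (the vertical part) and the fields vanishing at $0$ that project isomorphically to orbifold Jacobi fields along $c$ (the horizontal part). Your phrasing via the kernels of $d\exp^\perp_P$ and $d\exp^{\orb}_p$ is a repackaging of the paper's two linear-independence inequalities, with your injectivity and surjectivity arguments corresponding exactly to its two directions.
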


\begin{proof}
As mentioned above, if $\gamma:I\to F\Orb$ is a critical point for $E$, then it is a geodesic in $F\Orb$ starting perpendicularly to the fiber $\pi^{-1}(p)$. Since $\pi$ is a Riemannian submersion, the geodesic $\gamma$ is a horizontal geodesic, and thus its projection $c=\pi\circ \gamma$ is an (orbifold) geodesic in $\Orb$. Moreover, just as in the manifold case, any orbifold geodesic from $p$ to $q$ can be lifted horizontally to a horizontal geodesic, ending at $z\in \pi^{-1}(q)$, and everywhere perpendicular to the fibers of $\pi$ (in particular, starting orthogonal to $\pi^{-1}(p)$). Clearly the two maps are inverses of one another, and this gives the bijection.

Let $\Lambda_{p}$ be the space of Jacobi fields along $\gamma$, given as variations through horizontal geodesics from $\pi^{-1}(p)$. The zeroes of Jacobi fields in $\Lambda_p$ count precisely the focal points of $\pi^{-1}(p)$, thus $\ind_I(\gamma)$ equals the number of Jacobi fields in $\Lambda_p$ vanishing at points in $I$, counted with multiplicity. To prove the equivalence between the indices, it is thus enough to show that $\pi: F\Orb\to \Orb$ induces, for every $t_0\in [0,1]$, a one-to-one correspondence between:
\begin{itemize}
\item Jacobi fields $J\in \Lambda_p$ with $J(t_0)=0$, and
\item Orbifold Jacobi fields $J$ along $c$ with $J(0)=J(t_0)=0$.
\end{itemize}

For each $t\in [0,1]$ there is a small time segment $I=(t-\epsilon,t+\epsilon)$ such that $\gamma(I)$ is entirely contained in an open subset $V\subset F\Orb$ for which $U:=\pi(V)$ is a good local chart, and such that $\pi:V\to U$ factors as $V\stackrel{\tilde{\pi}}{\longrightarrow} \tilde{U}\stackrel{\pi_U}{\longrightarrow} U$, where $\tilde{U}$ is the manifold chart of $U$ and $\tilde{\pi}$ is a standard Riemannian submersion. Working in such open sets shows that Jacobi fields along $\gamma$ project to orbifold Jacobi fields along $c$ and, conversely, that orbifold Jacobi fields along $c$ can be lifted to Jacobi fields along $\gamma$. Since horizontal geodesics from $\pi^{-1}(p)$ project via $\tilde{\pi}$ to geodesics in $\tilde{U}$ from $\tilde{p}=\tilde{c}(0)$, any Jacobi field $J\in\Lambda_{p}$ projects via $\tilde{\pi}_*$ to an orbifold Jacobi field along $\tilde{c}$ with $\tilde{\pi}_*J(0)=0$. Moreover, we see that orbifold Jacobi fields along $c$ vanishing at $t_0$ correspond to the Jacobi fields in $\Lambda_{p}$ vanishing at the same point.

The space $\Lambda_{p}$ is a vector space of dimension $\dim F\Orb-1$, and it splits into 2 summands:
\begin{itemize}
\item The space $\Lambda^v$, spanned by (the restriction to $\gamma$ of) the action fields $X^*$, $X\in \mathfrak{so}(n)$. This subspace has dimension $=\dim \pi^{-1}(p)$, and consists of nowhere-vanishing Jacobi fields.
\item The space $\Lambda^h$, spanned by Jacobi fields $J$  normal to $\gamma'(t)$ such that $J(0)=0$ and $J'(0)\in \nu_{\gamma(0)}(\pi^{-1}(p))$. This space has dimension $\dim F\Orb-\dim (\pi^{-1}(p))-1=\dim \Orb-1$, and $\tilde{\pi}_*$ defines an isomorphism between $\Lambda^h$ and the space of Jacobi fields along $\tilde{c}$ vanishing at $t=0$.
\end{itemize}
Suppose now that $J_1,\ldots, J_m\in \Lambda_{p}$ are linearly independent Jacobi fields such that $J_i(t_0)=0$. Then under the splitting $J_i=X_i^*+ K_i$, with $X_i^*\in \Lambda^v$ and $K_i\in \Lambda^h$, the Jacobi fields $K_i$ must be linearly independent as well: in fact, if we had $\sum a_iK_i=0$ for some nonzero coefficients $a_i$, then
\[
\sum a_i J_i=\sum a_i X_i^*+\sum a_i K_i=\sum a_i X_i^*\in \Lambda^v,
\]
but then $J(t_0)=\sum a_i J_i(t_0)=0$ would contradict the fact that Jacobi fields in $\Lambda^v$ are nowhere vanishing.

The Jacobi fields $\tilde{\pi}_*J_1,\ldots \tilde{\pi}_*J_m$ along $\tilde{c}$ satisfy $\tilde{\pi}_*J_i(t_0)=0$. Moreover, since $\tilde{\pi}_*J_i=\tilde{\pi}_* K_i$ and the $K_i$ are linearly independent, it follows that $\tilde{\pi}_*J_1,\ldots, \tilde{\pi}_*J_m$ are linearly independent as well, and this proves that $\ind_I(\tilde{c})\geq \ind_I(\gamma)$.

To prove the opposite inequality, take linearly independent Jacobi fields $J_1,\ldots, J_m$ along $\tilde{c}$ such that $J_i(0)=J_i(t_0)=0$, and let $K_1,\ldots K_m\in \Lambda^h$ denote the Jacobi fields along $\gamma$ such that $\tilde{\pi}_*(K_i)=J_i$. Then the $K_i$ are linearly independent with $K_i(0)=0$, and let $v_i:=K_i(t_0)\in T_{\gamma(t_0)}\big(\pi^{-1}(\gamma(t_0))\big)$. Letting $X_i^*\in \Lambda^v$ such that $X_i^*(t_0)=v_i$, we get linearly independent Jacobi fields $K_i-X_i^*\in \Lambda_{p}$ vanishing at $t_0$. This proves that $\ind_I(\gamma)\geq \ind_I(\tilde{c})$, as we wanted.
\end{proof}

This theorem, together with the Morse theory arguments, implies the following:

\begin{cor}\label{C:CW-Omega}
Given a Riemannian orbifold $\Orb$ and points $p,q\in\Orb$, with $p$ regular, the orbifold loop space $\Omega_{p,q}^\orb\Orb$ is homotopy equivalent to a CW-complex with one cell of dimension $d$ for every orbifold geodesic $c:I\to \Orb$ from $p$ to $q$, with $\ind_I^\orb(c)=d$.
\end{cor}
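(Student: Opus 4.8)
The plan is to read the statement off directly from the two ingredients already assembled above: the Morse-theoretic CW-description of $\Omega_{p,q}^\orb\Orb$ provided by the energy functional $E$, and the dictionary of Proposition \ref{P:lift-geods} that translates the analytic data of $E$ into the orbifold-geometric data of $\Orb$. No further geometry is needed; the corollary is essentially a substitution of one description for the other, and the proof should amount to invoking the two results in sequence.

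Concretely, I would first fix the regular point $p$ and choose the endpoint (equivalently, a frame $z\in\pi^{-1}(q)$) to be generic, in the sense recorded before Proposition \ref{P:lift-geods}: so that $z$ is not a focal point of $\pi^{-1}(p)$ along any horizontal geodesic, i.e. the nullity of every critical point of $E$ vanishes. For such a choice $E$ is a Morse function on $\Omega_{p,z}^\orb\Orb$, so Milnor's Theorem 17.3 applies verbatim and exhibits $\Omega_{p,q}^\orb\Orb$ as homotopy equivalent to a CW-complex carrying exactly one cell of dimension $d$ for each critical point $\gamma$ of $E$ with $\ind_I(\gamma)=d$. I would then feed in Proposition \ref{P:lift-geods}, which supplies a bijection between these critical points $\gamma$ and the orbifold geodesics $c=\pi\circ\gamma$ from $p$ to $q$, together with the index equality $\ind_I(\gamma)=\ind_I^\orb(c)$. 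Substituting this bijection into the cell count immediately replaces ``one $d$-cell per critical point of index $d$'' by ``one $d$-cell per orbifold geodesic $c$ from $p$ to $q$ with $\ind_I^\orb(c)=d$'', which is precisely the assertion.

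The one point demanding care, and hence the step I expect to be the main obstacle, is the genericity of the endpoint. Since replacing $z$ by $g\cdot z$ for $g\in\Or(n)$ only substitutes a homeomorphic copy of the loop space, genericity is really a condition on the base point $q$ rather than on the frame $z$, and one must ensure such a generic $q$ exists so that $E$ has no degenerate critical points and the cell structure is genuinely well-defined; this is exactly the transversality (Sard-type) argument underlying the nullity statement quoted above. Accordingly the corollary is to be understood, and will be applied, for generic $q$, for which the count of orbifold geodesics by orbifold index is the intended one. Everything else is a direct transcription through Proposition \ref{P:lift-geods} and requires no additional argument.
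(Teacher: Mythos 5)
Your proposal is correct and follows exactly the route the paper takes: the corollary is stated there as an immediate consequence of the Morse-theoretic cell decomposition of $\Omega_{p,q}^\orb\Orb$ (Milnor, Theorem 17.3, applied to the energy functional for a generic endpoint with vanishing nullity) combined with the bijection and index equality of Proposition \ref{P:lift-geods}. Your remark about genericity of $q$ is also how the paper handles it, via Sard's theorem applied to the normal exponential map when the corollary is actually used.
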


The orbifold loop space and the loop space of the classifying space are related as follows (cf. \cite[Thm.~3.2.2]{GuHa}).

\begin{prp}\label{P:weak-hom-equiv}
Let $\Orb$ be an orbifold, and $p,q\in \Orb$. If $p\in\Orb$ is chosen to be a regular point, then $\Omega_{p,q}^\orb\Orb$ is weakly homotopy equivalent to $\Omega B\Orb$.
\end{prp}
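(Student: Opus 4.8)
The plan is to realise both $\Omega_{p,q}^\orb\Orb$ and $\Omega B\Orb$ as weakly equivalent to one and the same space of paths in $B\Orb$ between two fixed points, and to connect them by a short zig-zag of weak homotopy equivalences. The central object is the total space $E := F\Orb \times E\Or(n)$, which carries two maps: the projection $\mathrm{pr}\colon E \to F\Orb$, a homotopy equivalence since $E\Or(n)$ is contractible, and the quotient $q\colon E \to B\Orb$, which by construction is a principal $\Or(n)$-bundle and hence a fibration. I would fix $f_0 \in \pi^{-1}(p)$ and $v_0 \in E\Or(n)$, set $b_0 := q(f_0,v_0)$, and fix the endpoint $e_1 := (z,v_1) \in E$ with $b_1 := q(e_1)$; note that $b_0$ lies over $p$ and $b_1$ over $q$ under $\pi_1\colon B\Orb \to \Orb$. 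Throughout I would work with the continuous model $\tilde\Omega^\orb_{p,q}\Orb$, which is homotopy equivalent to $\Omega^\orb_{p,q}\Orb$ as recalled above, so that the fibration arguments below apply to genuine path spaces.

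First I would compare $\Omega_{p,q}^\orb\Orb$ (realised as $\tilde\Omega^\orb_{p,z}\Orb$) with the auxiliary path space
\[
\mathcal{A} := \{\Gamma\colon I \to E \mid \Gamma(0) \in q^{-1}(b_0),\ \Gamma(1) = e_1\}
\]
by post-composition with $\mathrm{pr}$. Here the hypothesis that $p$ is \emph{regular} enters decisively: it forces $\pi^{-1}(p)$ to be a single free $\Or(n)$-orbit, so that $\mathrm{pr}$ restricts to a homeomorphism $q^{-1}(b_0) \to \pi^{-1}(p)$. Consequently $\mathrm{pr}_*\colon \mathcal{A} \to \tilde\Omega_{p,z}^\orb\Orb$ is well defined, and a path-lifting argument shows it is a fibration whose fibre over a path $\delta$ consists of the paths $\sigma$ in the contractible space $E\Or(n)$ with prescribed endpoints $\sigma(0) = g(\delta(0))\cdot v_0$ and $\sigma(1) = v_1$, where $g$ is the continuous inverse of the orbit map. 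Such a fibre is contractible, so $\mathrm{pr}_*$ is a weak equivalence.

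Next I would push $\mathcal{A}$ down to $B\Orb$. Post-composition with $q$ gives a map $q_*\colon \mathcal{A} \to P_{b_0,b_1}B\Orb$ into the space of paths in $B\Orb$ from $b_0$ to $b_1$, since the conditions $\Gamma(0) \in q^{-1}(b_0)$ and $\Gamma(1) = e_1$ force the endpoints of $q\circ\Gamma$ to be $b_0$ and $b_1$. The fibre of $q_*$ over a path $c$ is the space of lifts of $c$ ending at the fixed point $e_1$; for a Hurewicz fibration this space is weakly contractible, because evaluation at $1$ is simultaneously a homotopy equivalence onto the fibre $q^{-1}(b_1)$ and a fibration, so its fibres are weakly contractible. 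Hence $q_*$ is again a weak equivalence. Finally, since $\Orb$ is connected, $B\Orb$ is path-connected, and concatenation with a fixed path from $b_1$ to $b_0$ yields a homotopy equivalence $P_{b_0,b_1}B\Orb \simeq \Omega_{b_0}B\Orb = \Omega B\Orb$. Stringing these together gives the zig-zag
\[
\Omega_{p,q}^\orb\Orb \xleftarrow{\ \sim\ } \mathcal{A} \xrightarrow{\ \sim\ } P_{b_0,b_1}B\Orb \xrightarrow{\ \sim\ } \Omega B\Orb
\]
of weak equivalences, as desired.

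The main obstacle I anticipate is purely technical: verifying that the two comparison maps $\mathrm{pr}_*$ and $q_*$ are genuine (Serre) fibrations, so that contractibility of their fibres truly yields weak equivalences. The delicate point for $\mathrm{pr}_*$ is that the starting-point constraint couples the $E\Or(n)$-coordinate to $\delta(0)$ through the continuous inverse $g$ of the orbit map $\Or(n) \to \pi^{-1}(p)$, available precisely because $p$ is regular; one must check the homotopy lifting property in the presence of this moving left endpoint, which reduces to the path-space fibration of the contractible space $E\Or(n)$. Everything else is a formal consequence of the principal-bundle structure of $q\colon E \to B\Orb$ and the contractibility of $E\Or(n)$.
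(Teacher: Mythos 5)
Your argument is correct, but it takes a genuinely different route from the paper's. The paper works with the space $\mathcal{P}_p$ of \emph{all} continuous paths in $F\Orb$ starting in $\pi^{-1}(p)$ (free right endpoint): regularity of $p$ makes the $\Or(n)$-action on $\mathcal{P}_p$ free, $\mathcal{P}_p$ equivariantly deformation retracts onto the single free orbit $\pi^{-1}(p)\cong\Or(n)$, so the Borel construction $(\mathcal{P}_p)_{\Or(n)}$ is contractible; the equivariant evaluation $\mathrm{ev}_1\colon\mathcal{P}_p\to F\Orb$ then induces a fibration $(\mathcal{P}_p)_{\Or(n)}\to B\Orb$ with fibre $\tilde{\Omega}^\orb_{p,q}\Orb$, and one quotes the standard fact that the fibre of a fibration with contractible total space is weakly equivalent to the loop space of the base. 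You instead compare the two path spaces directly via the auxiliary space $\mathcal{A}$ of paths in $E=F\Orb\times E\Or(n)$; this avoids Borel constructions of infinite-dimensional spaces but costs you two separate fibration-with-contractible-fibres verifications in place of one quotable lemma. Both proofs use the regularity of $p$ for exactly the same purpose, namely that $\pi^{-1}(p)$ is a single free $\Or(n)$-orbit. The technical worry you flag at the end is genuinely closable and is the only real gap to fill: $\mathrm{pr}_*$ is precisely the pullback of the endpoint-evaluation fibration $(\mathrm{ev}_0,\mathrm{ev}_1)\colon C^0(I,E\Or(n))\to E\Or(n)\times E\Or(n)$ along the map $\delta\mapsto\bigl(g(\delta(0))\cdot v_0,\,v_1\bigr)$, and the second comparison map is the restriction of $\Gamma\mapsto(q\circ\Gamma,\Gamma(1))$, from $C^0(I,E)$ to the fibre product of $C^0(I,B\Orb)$ and $E$ over $B\Orb$, to the slice $P_{b_0,b_1}B\Orb\times\{e_1\}$; both of these ambient maps are Hurewicz fibrations by the exponential law, so your fibre computations do yield weak equivalences. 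One cosmetic blemish: you use the letter $q$ both for the chosen point of $\Orb$ and for the quotient map $E\to B\Orb$, which should be disambiguated.
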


\begin{proof}
Consider the space $\mathcal{P}_p=\{\gamma\in C^0(I,F\Orb)\mid \gamma(0)\in \pi^{-1}(p)\}$. Since $p$ is a regular point, the group $\Or(n)$-action on $\mathcal{P}_p$ by $(g\cdot \gamma)(t)=g\cdot(\gamma(t))$ is free. In particular, the Borel construction $(\mathcal{P}_p)_{\Or(n)}=(\mathcal{P}_p\times E\SOr(n))/\Or(n)$ is homotopy equivalent to the standard quotient $\mathcal{P}_p/\SOr(n)$. Moreover, the map $i:\Or(n)\to \mathcal{P}_p$ sending $g$ to the constant path $\gamma(t)\equiv g\cdot z$ is an inclusion, and in fact an $\Or(n)$-equivariant homotopy equivalence.
This induces a homotopy equivalence between the quotients $\mathcal{P}_p/\Or(n)\simeq \Or(n)/\Or(n)=\{pt\}$ and thus $(\mathcal{P}_p)_{\Or(n)}$ is contractible.

The map $ev_1:\mathcal{P}_p\to F\Orb$ sending a curve $\gamma$ to $\gamma(1)$ is a fibration, with fiber $(ev_1)^{-1}(q)=\tilde{\Omega}^\orb_{p,q}\Orb$. The map $ev_1$ is also $\Or(n)$-equivariant, and thus it induces a fibration between the Borel spaces $(\mathcal{P}_p)_{\Or(n)}\to (F\Orb)_{\Or(n)}=B\Orb$, with the same fiber $\tilde{\Omega}^\orb_{p,q}\Orb$. Since the total space $(\mathcal{P}_p)_{\Or(n)}$ is contractible, it follows, for example by \cite[Prop. 4.66]{Hat}, that $\tilde{\Omega}^\orb_{p,q}\Orb\simeq \Omega B\Orb$. Since $\tilde{\Omega}^\orb_{p,q}\Orb\simeq {\Omega}^\orb_{p,q}\Orb$, we have the result.
\end{proof}

\subsection{Orbifold loop space of Besse Orbifolds}
Let us now assume that $\Orb$ is a Besse orbifold. The first result is the following:

\begin{thm}[Wadsley's Theorem]\label{prp:common_period}
Given a Besse orbifold $\Orb$, all prime geodesics have a common period. That is, there is a length $L$ such that every prime geodesic $c$ has length $\ell(c)=L/M$, for some integer $M$ depending on $c$.
\end{thm}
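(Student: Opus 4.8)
The plan is to show that the statement is equivalent to the periodicity of the geodesic flow and to establish the latter by adapting the classical argument of Wadsley to the orbifold setting. First I would realize the geodesic flow on the frame bundle, which is a genuine manifold: equip $F\Orb$ with the natural metric of Section \ref{sub:orb_bundles}, let $B(e_1)$ be the standard horizontal vector field associated with the first basis vector $e_1\in\R^n$, and let $\psi_t$ denote its flow. Since $B(e_1)$ is invariant under the subgroup $\mathrm{Stab}(e_1)\subset\Or(n)$ fixing $e_1$, which we identify with $\Or(n-1)$, the flow $\psi_t$ commutes with the $\Or(n-1)$-action and descends to the geodesic flow $\phi_t$ on the unit tangent orbifold $T^1\Orb=F\Orb/\Or(n-1)$ (which is compact, as is $\Orb$); the integral curves of $\psi_t$ project to the geodesics of $\Orb$, and the prime period of an orbit of $\phi_t$ is the length of the corresponding prime geodesic. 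The Besse hypothesis then says precisely that every orbit of $\phi_t$ is closed: if a geodesic closes up after length $\ell$, the frame returns to $\psi_\ell(f)=f\cdot h$ with $h\in\Or(n-1)$, so $[\psi_\ell(f)]=[f]$ in $T^1\Orb$. With this dictionary, the assertion that all prime geodesics have a common period $L$ is equivalent to $\phi_L=\mathrm{id}$, i.e.\ to the periodicity of the geodesic flow.

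The proof then rests on two ingredients, carried out for the (orbifold) flow $\phi_t$. The first is that the minimal--period function $\tau\colon T^1\Orb\to(0,\infty)$ is bounded. Lower semicontinuity of $\tau$ is a general feature of flows with closed orbits, and together with compactness of $T^1\Orb$ and positivity of $\tau$ it yields a positive lower bound. The essential and delicate point is the upper bound: as Sullivan's example of a periodic flow with unbounded periods on a compact $5$-manifold shows, boundedness is false for general periodic flows and must exploit the geometry of the geodesic flow. Here I would use that $\phi_t$ is the Reeb flow of the canonical contact form on $T^1\Orb$, so that the period of a closed orbit $\gamma$ is its contact action $\int_\gamma\alpha$, and adapt the Epstein--Wadsley analysis of the stratification of $T^1\Orb$ by the (flow-invariant, and locally constant off a lower--dimensional ``bad'' set) period. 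Because $T^1\Orb$ is an orbifold, this analysis is run in local manifold charts $\tilde V/\Gamma$, in which $\phi_t$ lifts to a periodic flow on the manifold $\tilde V$ whose periods differ from those downstairs only by the order of an element of $\Gamma$; compactness then globalizes the local bounds.

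The second ingredient upgrades boundedness to an honest common period by Wadsley's averaging construction. Once $\tau$ is bounded above and below, one replaces the generating field $R$ by $\tilde R=\lambda\cdot R$, where $\lambda$ is obtained by averaging $1/\tau$ over the orbits so that every orbit of $\tilde R$ has the same period $L$; the boundedness of $\tau$ guarantees $\lambda$ is a well-defined, positive reparametrisation (smooth in the orbifold sense), and $\tilde R$ integrates to an $\Ss^1$-action on $T^1\Orb$ with the same orbits as $\phi_t$. This yields $\phi_L=\mathrm{id}$, whence every prime orbit, and hence every prime geodesic, has length $L/M$ for the integer $M$ recording how many times it wraps. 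The averaging can be performed $\Or(n-1)$-equivariantly on $F\Orb$ and pushed down, so no extra orbifold difficulty arises at this stage.

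The main obstacle is the boundedness of the period function. Its manifold counterpart already requires the contact/geodesic structure rather than mere periodicity, and the orbifold version additionally forces one to control the flow over the singular strata of $T^1\Orb$; I expect the bulk of the argument to be devoted to making the local-to-global passage through the orbifold charts precise, while the reduction to the frame bundle and the final averaging are comparatively routine.
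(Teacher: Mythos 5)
Your reduction to the flow on $T^1\Orb$ (equivalently, the $\Or(n-1)$-quotient of the horizontal flow on $F\Orb$) and the dictionary ``common period for prime geodesics $\Leftrightarrow$ $\phi_L=\mathrm{id}$'' match the paper's setup. But the two-step scheme ``boundedness of $\tau$, then averaging'' has a genuine gap at its final and decisive step. Averaging produces a reparametrised field $\tilde R=\lambda R$ whose flow satisfies $\tilde\phi_L=\mathrm{id}$; this gives an $\Ss^1$-action with the same \emph{orbits} as the geodesic flow, but it does not give $\phi_L=\mathrm{id}$ for the geodesic flow itself unless you also show $\lambda$ is constant --- and that is exactly the content of the theorem (the lengths, not just the orbit sets, must be commensurable). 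The geometric input that forces constancy is the first variation formula (equivalently, invariance of the contact action $\int_\gamma\alpha$ under deformations through closed Reeb orbits): it shows that the corrected period function $\alpha(x)=\limsup_{y\to x}\varrho(y)$ is \emph{locally constant} off the bad set $B_1$ of points near which $\varrho$ is unbounded. One must then still prove $B_1=\emptyset$ so that local constancy globalises; in Wadsley's argument, which the paper follows, this is a contradiction argument via Newman's theorem on finite-order homeomorphisms. Neither ingredient appears in your proposal.

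Relatedly, your first step is not how the argument actually runs: Wadsley/Besse do not establish an a priori bound on $\tau$ and then upgrade it --- they prove directly that $B_1=\emptyset$, and boundedness is a consequence, not a hypothesis, of the conclusion. As you note, boundedness fails for general periodic flows (Sullivan, Epstein--Vogt), so it cannot be extracted from a soft ``stratification analysis'' without re-using the contact/first-variation input, at which point you are redoing Wadsley's proof rather than quoting a separate boundedness theorem. Finally, the orbifold-specific work is more than running the argument in charts $\tilde V/\Gamma$: the paper's key technical step (Lemma \ref{lem:local_development}) produces, around each periodic orbit $L$, a \emph{manifold} cover of a whole tubular neighbourhood of $L$ in which $L$ lifts homeomorphically; this is what makes Poincar\'e return maps, their return times, and the Newman argument available along orbits passing through singular points, and it does not follow merely from the existence of local charts at individual points.
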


The proof follows closely the original proof of Wadsley for manifolds, with minor modifications. For sake of completeness, we included a proof of the orbifold version of this theorem in Appendix \ref{APP:Wadsley}. One immediate corollary is the following:

\begin{cor}\label{C:reduce-to-simply-connected}
Any Besse orbifold $\Orb$ is compact. Moreover, if the dimension is at least $2$ then its orbifold universal cover $\tilde{\Orb}$ is Besse as well. In particular, $\pi_1^\orb(\Orb)$ is finite in this case.
\end{cor}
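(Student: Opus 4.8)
The plan is to extract everything from Wadsley's common period $L$ of Theorem~\ref{prp:common_period}, which amounts to saying that the geodesic flow on the unit tangent bundle of $\Orb$ is periodic with period $L$: after arclength $L$ every geodesic has run through an integer number of its periods and returned to its initial position and velocity, i.e.\ $\exp_x(Lu)=x$ for every unit vector $u$.

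First I would prove compactness, which needs no dimension restriction. Since $\Orb$ is Besse it is complete, and every geodesic lies on a closed geodesic of length at most $L$. Given $p,q\in\Orb$, a minimizing geodesic between them is a sub-arc of such a closed geodesic; being minimizing it must be the shorter of the two arcs into which $p,q$ cut that closed geodesic, hence has length at most $L/2$. Thus $\operatorname{diam}(\Orb)\le L/2$, and as a complete, locally compact length space of bounded diameter, $\Orb$ is compact by Hopf--Rinow.

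The heart of the matter is to show that the universal cover $\tilde\Orb$ is again compact; the Besse property of $\tilde\Orb$ and finiteness of $\pi_1^\orb(\Orb)$ then follow formally. Here I would exploit that the common period forces \emph{conjugate points to appear within a uniformly bounded length}. Fix a regular point $x\in\Orb$; periodicity of the flow gives $\exp_x(Lu)=x$ for every unit $u$, so the exponential map collapses the entire sphere of radius $L$ to the single point $x$. Differentiating in $u$ produces, for $n=\dim\Orb\ge 2$, a nonzero orbifold Jacobi field along each geodesic $c_u$ vanishing at both $t=0$ and $t=L$; hence the first conjugate point along any geodesic issuing from a regular point occurs at parameter at most $L$. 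This is exactly where the hypothesis $n\ge 2$ enters: for $n=1$ there are no transverse Jacobi fields, and indeed $\Ss^1$, whose cover $\R$ is not Besse, is a genuine exception. Conjugacy is a local, infinitesimal condition preserved by the local isometry $\tilde\Orb\to\Orb$, so the same bound holds for every geodesic of $\tilde\Orb$. Consequently no geodesic of $\tilde\Orb$ minimizes beyond length $L$, whence every point lies within distance $L$ of a fixed regular point and $\operatorname{diam}(\tilde\Orb)<\infty$. Being complete and locally compact, $\tilde\Orb$ is then compact.

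Finally, $\pi_1^\orb(\Orb)$ is the deck group of the orbifold covering $\tilde\Orb\to\Orb$, acting freely and properly discontinuously on the compact space $\tilde\Orb$; covering $\tilde\Orb$ by finitely many neighborhoods each meeting only finitely many of its translates shows this group is finite. In particular $\tilde\Orb\to\Orb$ is a finite covering, and since a finite covering of a Besse orbifold is Besse, $\tilde\Orb$ is Besse as claimed. I expect the main obstacle to be the conjugate-point step: one must set up the refocusing of geodesics and the ``no minimization past a conjugate point'' principle carefully in the orbifold category, working at regular points and with orbifold Jacobi fields as in Proposition~\ref{P:lift-geods}, rather than the purely formal compactness and covering-space arguments that surround it.
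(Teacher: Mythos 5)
Your argument is correct in outline but takes a genuinely different route from the paper's. The paper never touches conjugate points: it represents $\pi_1^{\orb}(\Orb,p)$ by horizontal geodesics in $F\Orb$ with endpoints over a regular point $p$, shows that a geodesic loop of length exactly $L$ equals its own inverse in $\pi_1^{\orb}$ by rotating the initial vector $v$ to $-v$ through horizontal directions (this is where $n\ge 2$ enters there: the unit sphere must be connected), deduces that every class is represented by a geodesic of length $<2L$, and concludes that the fibers of $F\tilde{\Orb}\to F\Orb$ are bounded, discrete, hence finite; compactness of $\tilde\Orb$ and its Besse property then come for free from finiteness of the cover. You instead run the classical Bott--Samelson refocusing argument: $\exp_x(Lu)=x$ produces a nonzero Jacobi field vanishing at $0$ and $L$ (here $n\ge2$ enters as the existence of a transverse variation direction), conjugacy transfers to $\tilde\Orb$, and ``no minimization past a conjugate point'' bounds $\operatorname{diam}\tilde\Orb$. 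The trade-off: the paper's proof needs only the loop-space description of $\pi_1^{\orb}$ and elementary covering-space metric geometry, whereas yours requires the orbifold version of the second-variation/index principle, which the paper never states but which can indeed be extracted from the proof of Proposition~\ref{P:lift-geods} by passing to the frame bundle, where conjugate points of $c$ from the regular point $p$ become focal points of the submanifold $\pi^{-1}(p)$ and the classical manifold result applies (using that $\pi$ is a submetry, so minimizing orbifold geodesics from $p$ lift to horizontal geodesics minimizing the distance from the fiber). You correctly flag this as the main technical burden. One small slip: the deck group of an orbifold universal covering need not act \emph{freely} on $\tilde\Orb$ (it acts freely only over the regular part); but your finiteness argument uses only proper discontinuity plus compactness, so nothing breaks.
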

\begin{proof} Compactness follows from the Hopf Rinow Theorem for length spaces, cf. \cite{MR1835418}, Thm.~2.5.28. Let $p$ be a regular point of $\Orb$. We represent the fundamental group of $\Orb$ as $\pi_1^{\orb}(\Orb,p)=\Omega_{p,p} F\Orb/\sim$ as discussed in Section \ref{sub:orb_invariants}. Then any element of $\pi_1^{\orb}(\Orb,p)$ is represented by a horizontal geodesic $\gamma$ in the orthonormal frame bundle $F\Orb$ starting and ending over $p$, and hence by a geodesic loop $c$ in $\Orb$ based at $p$. Conversely, such a loop lifts uniquely up to the $\Or(n)$-action to a horizontal geodesic with end points over $p$, cf.~Proposition \ref{P:lift-geods}.

Let $L$ be the minimal common period of all closed geodesics on $\Orb$. We claim that a geodesic $c$ based at $p$ of length $L$ represents the same element in $\pi_1^{\orb}(\Orb,x)$ as its inverse. Let $\gamma$ be a lift of $c$ to $\Fr \Orb$ and let $g \in \Or(n)$ such that $g \gamma(1)=\gamma(0)$. Continuously deforming the initial vector from $v$ to $-v$ through horizontal vectors defines a homotopy through horizontal geodesics between $\gamma$ and $(g\gamma)^{-1}=g(\gamma)^{-1}$. In particular, we have $[\gamma]=[\gamma]^{-1}$ in $\pi_1^{\orb}(\Orb,x)$ as claimed.

Consequently, any element in $\pi_1^{\orb}(\Orb,p)$ is represented by a horizontal geodesic of length $<2L$. Let $\tilde \Orb \To \Orb$ be the universal covering and let $F\tilde \Orb \To F\Orb$ be the induced covering \cite[Lem.~4.6]{La2}. It follows that the fibers of $F\tilde \Orb \To F\Orb$ have diameter $\leq 2(2L+\mathrm{diam}(\pi^{-1}(p))$. Since $F\tilde \Orb$ is complete and connected and the fibers are discrete, they are finite and hence so is $\pi_1^{\orb}(\Orb,p)$.
\end{proof}

The corollary above reduces the proof of Theorem \ref{T:main-theorem} to the simply connected case. Furthermore, we can use Wadsley's Theorem and Corollary \ref{C:CW-Omega} to prove the following:

\begin{thm}\label{T:uniformly-bounded}
Given a Riemannian orbifold $\Orb$ with all geodesics closed, the loop space $\Omega B\Orb$ has uniformly bounded Betti numbers.
\end{thm}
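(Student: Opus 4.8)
The plan is to combine the Morse-theoretic description of $\Omega_{p,q}^\orb\Orb$ from Corollary \ref{C:CW-Omega} with the rigidity of the geodesic flow provided by Wadsley's Theorem \ref{prp:common_period}. Fix a regular point $p$ and a generic point $q$. By Proposition \ref{P:weak-hom-equiv} it suffices to bound the Betti numbers of $\Omega_{p,q}^\orb\Orb$, and by Corollary \ref{C:CW-Omega} this space is homotopy equivalent to a CW-complex with exactly one cell of dimension $d$ for each orbifold geodesic from $p$ to $q$ of orbifold index $d$. Since for any field the $k$-th Betti number of a CW-complex is at most its number of $k$-cells, it is enough to show that the number $n_k$ of geodesics from $p$ to $q$ of orbifold index $k$ is bounded by a constant independent of $k$.

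To this end I would first exploit that, by Theorem \ref{prp:common_period}, $L$ is a common period of every geodesic, so the geodesic flow $\phi_t$ on the unit tangent bundle satisfies $\phi_L=\mathrm{id}$ and hence $d\phi_L=\mathrm{id}$. Consequently the linearized flow, and therefore the conjugate/focal point structure along any geodesic, is $L$-periodic; in particular every geodesic $\gamma$ from $p$ reaches $q$ exactly at the times $s+kL$ ($k\ge 0$) for finitely many $s\in(0,L]$. This yields a bijection between geodesics from $p$ to $q$ and pairs $(c_j,k)$, where $c_1,\dots,c_N$ are the finitely many geodesics from $p$ to $q$ of length at most $L$ (finiteness for generic $q$ by Sard's theorem) and $k\ge 0$ records how many further full periods are appended.

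The key point is then an index-growth estimate: appending one period of length $L$ to $c_j$ adds precisely the focal points of $\pi^{-1}(p)$ contained in one period along the corresponding horizontal geodesic. By the $L$-periodicity above this number $\mu_j$ is independent of $k$, and it is strictly positive because $d\phi_L=\mathrm{id}$ forces $\gamma(L)$ to be conjugate to $\gamma(0)$ (with multiplicity $\dim\Orb-1$). Hence $\ind_I^\orb$ of the geodesic corresponding to $(c_j,k)$ equals $\ind_I^\orb(c_j)+k\mu_j$, an arithmetic progression with common difference $\mu_j\ge 1$. For a fixed value $k$ of the index, each family $j$ therefore contributes at most one geodesic, so $n_k\le N$, giving the uniform bound.

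I expect the main obstacle to be the index-growth step, i.e.\ making rigorous that inserting a full period strictly raises the orbifold index by a fixed positive amount. This requires transferring the periodicity of the ambient geodesic flow to periodicity of the \emph{focal} points of the fiber $\pi^{-1}(p)$ (rather than conjugate points of a single point), which is most cleanly done on the frame bundle $F\Orb$ using the splitting $\Lambda_p=\Lambda^v\oplus\Lambda^h$ from the proof of Proposition \ref{P:lift-geods}: the vertical fields never vanish, while the horizontal fields inherit $L$-periodicity from $d\phi_L=\mathrm{id}$, and their forced vanishing at multiples of $L$ gives $\mu_j\ge 1$. Verifying that a generic $q$ simultaneously avoids being a focal point and is a regular value (so that the decomposition into a primitive segment plus full periods is clean and $N<\infty$) is a routine transversality argument.
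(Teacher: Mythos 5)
Your proposal is correct and follows essentially the same route as the paper's proof: reduce via Proposition \ref{P:weak-hom-equiv} and Corollary \ref{C:CW-Omega} to counting geodesics from $p$ to a generic $q$ of a given index, invoke Wadsley's Theorem to decompose every such geodesic into a primitive segment of length at most $L$ plus full periods, and show the index grows by a fixed positive increment per period (the paper makes this increment explicit as $C+n-1$, where $C$ is the common index of the length-$L$ loops at $p$ and $n-1$ is their nullity, while you only use its strict positivity). The only cosmetic difference is the final count: you use injectivity of $k\mapsto\ind(c_j)+k\mu_j$ within each family, whereas the paper sorts the indices into consecutive intervals of width $C+n-1$; both give the same uniform bound.
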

\begin{proof}
The proof is virtually identical to the one in the manifold case, cf. Sections 7.41 and 7.42, p.~193 in \cite{Besse}. For the sake of completeness, and because this result is central in proving our Main Theorem, we recall the arguments here.

In the following we fix a regular point $p$ in $\Orb$ and choose some point $q$ that is not conjugate to $p$ along any geodesic (this can be done by a straightforward consequence of Sard's theorem applied to the normal exponential map from the normal bundle of $\pi^{-1}(p)$ to $F\Orb$). By Proposition \ref{P:weak-hom-equiv} it is sufficient to show that the Betti numbers of $\Omega_{p,q}^\orb\Orb$ are uniformly bounded. Moreover, by Corollary \ref{C:CW-Omega}, it is enough to prove that for every $d$, the number of orbifold geodesics $c:I\to \Orb$ from $p$ to $q$ with $\ind_I^\orb(c)=d$, is bounded by a constant independent of $d$.

By Wadsley's Theorem, all closed geodesics have length $L/M$ for some fixed $L$ and $M$ an integer, and there is an open dense set of geodesics with length $L$.

Notice that any closed orbifold geodesic from $p$ with length $L$ must have the same index, and nullity $n-1$, the maximum possible. In fact, on the one hand, every orbifold Jacobi field $J$ along $c$ with $J(0)=0$ and $J'(0)\perp c'(0)$ defines a variation through geodesics from $p$, which by Wadsley's Theorem must return to $p$ at time $L$, and thus $J(L)=0$, thus proving the second statement. On the other hand, letting $c_0$, $c_1$ be any two geodesics from $p$ of length $L$, and letting $\{c_s\}_{s\in [0,1]}$ be a smoothly-varying family of orbifold geodesics $c_s:I\to \Orb$ of length $L$ from $c_0$ to $c_1$, it is well-known that the only discontinuities of the function $s\mapsto \ind_I^{\orb}(c_s)$ occur at times $s_i$ in which the nullity of $c_{s_i}$ jumps up. However, since the nullity of any such $c_s$ is $n-1$, it follows that $\ind_I^\orb(c_s)$ is in fact constant. In particular, any two orbifold geodesics of length $L$ have the same orbifold index, call it $C$.

Since $\Orb$ is compact, and $p$ and $q$ are not conjugate to each other, for any $e\in \RR$ there are only finitely many geodesics between $p$ and $q$ with energy $\leq e$. Let $\{c_1, \ldots c_k\}$ be the geodesics from $p$ to $q$ with length $\leq L$.

By Wadsley's Theorem, the orbifold geodesics with length in $(mL, (m+1)L)$ must be $\{c_1^{(m)},\ldots, c_k^{(m)} \}$, where $c_i^{(m)}$ is obtained by following the geodesic $c_i$ past $q$, and up to time $mL+\ell(c_i)$ (we have $c_i^{(m)}(mL+\ell(c_i))=c_i^{(m)}(\ell(c_i))=q$). In particular, for any integer $m$ there are exactly $k$ geodesics with length in $(mL, (m+1)L)$.

Since any geodesic longer than $L$ picks up $n-1$ conjugate points at times multiple of $L$, we obtain that
\[
\ind_I^\orb(c_1^{(m)})=m(C+ n-1)+\ind_I^\orb c_i
\]
Since the $c_i$ have length $\leq L$ and the index grows monotonously with the length, we have $\ind_I^\orb(c_i)\leq C<C+n-1$ and $\ind_I^\orb(c_1^{(m)})\in \big[m(C+n-1), (m+1)(C+n-1)\big]$.
It follows that for any integer $m$, the orbifold geodesics with index in $\big[m(C+n-1), (m+1)(C+n-1)\big]$ are exactly $c_1^{(m)},\ldots ,c_k^{(m)}$. In particular, for any $d$ the number of orbifold geodesics with index $d$ is at most $k$, hence the result.
\end{proof}

\section{Computing $H^*(\Omega B\Orb;\QQ)$, for simply connected Besse orbifolds} \label{sec:rational_cohomology_ring}

In this section and the next, $\Orb$ will denote an odd-dimensional, simply connected Besse orbifold. In particular, $\Orb$ is orientable and we only work with the bundle of oriented orthonormal frames. From the previous section, the loop space $\Omega B\Orb$ has uniformly bounded Betti numbers. Using the extra information that $\Orb$ is odd dimensional, and the fact that $\Omega B\Orb$ is an H-space, we will show that, in fact, $H^*(\Omega B\Orb;\K)\simeq \K[x]$ is a polynomial algebra in one generator of even degree, whenever $\K=\QQ$ or $\ZZ_p$. We will deal with two cases separately: in this section we will treat the case $\K=\QQ$; and in the next section, we will look at the case $\K=\ZZ_p$.

The main structure that we will exploit is the fact that $\Omega B\Orb$ (here thought as the space of continuous pointed loops at some fixed point $z\in B\Orb$) is an $H$-space, with respect to the map $\Omega B\times \Omega B\to \Omega B$ given by concatenation of paths. In particular, the cohomology $H^*(B\Orb;\K)$ (with respect to any field of coefficients $\K$) is a commutative, associative Hopf algebra.

When dealing with rational coefficients, we use Sullivan's theory of minimal models. The methods and results of this section are fairly standard.

Recall that to any simply connected topological space $X$, one can associate a differential graded algebra $(\Lambda V_X,d)$: that is, a free graded commutative algebra over $\QQ$
\[
\Lambda V_X=\wedge V_X^{odd}\otimes \QQ[V_X^{even}]
\]
where $V_X=V_X^{odd}\oplus V_X^{even}$ is a vector space over $\QQ$, together with a differential $d$ on $\Lambda V_X$, of degree 1. This differential graded algebra, called \emph{minimal model of $X$}, is defined by the properties that:
\begin{enumerate}
\item Letting $\Lambda^+V_X$ be the subspace of $\Lambda V_X$ of positive degree, then $d(\Lambda V_X)\subseteq \Lambda^+ V_X\cdot \Lambda^+ V_X$.
\item $H^*(\Lambda V_X,d)\cong H^*(X;\QQ)$.
\end{enumerate}
Among the properties of the minimal model, one has that for simply connected $X$, the degree-$d$ part $V_X^d$ of $V_X$ satisfies $V_X^d\simeq \textrm{Hom}(\pi_d(X),\QQ)$.

Although we are mainly concerned with the odd-dimensional case, let us formulate the following result also for even dimensions.
\begin{prp}\label{ratellprop}
Suppose $\Orb$ is a simply connected orbifold such that $\dim H^q(\Omega B\Orb;\QQ)$ is uniformly bounded. Then:
\begin{itemize}
\item If $\dim \Orb$ is odd, then $\Orb$ has the rational type of an odd dimensional sphere, and $H^*(\Omega \Orb;\QQ)\cong \QQ[x']$ where $\deg x'=n-1$.
\item If $\dim \Orb$ is even, then $H^*(\Orb;\QQ)$ is singly generated and $H^*(\Omega \Orb;\QQ)\cong \wedge (y') \otimes  \QQ[x']$ with $\dim \Orb=\deg x'-\deg y'+1$.
\end{itemize}
\end{prp}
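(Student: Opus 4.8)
The plan is to exploit the minimal model $(\Lambda V,d)$ of $B\Orb$ together with the standard relation between the minimal model of a simply connected space and that of its loop space, and then to feed in the hypothesis that $\dim H^q(\Omega B\Orb;\QQ)$ is uniformly bounded. Recall that if $(\Lambda V,d)$ is the minimal model of $B\Orb$, then the minimal model of $\Omega B\Orb$ is $(\Lambda s^{-1}V, 0)$, where $s^{-1}V$ denotes the desuspension of $V$ (the degree of each generator drops by one) and the differential vanishes, so that $V^d_{\Omega B\Orb}\cong V^{d+1}_{B\Orb}$. Consequently $H^*(\Omega B\Orb;\QQ)$ is, as a graded vector space, the free graded-commutative algebra on $s^{-1}V$. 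The first step is therefore to translate ``$\dim H^q(\Omega B\Orb;\QQ)$ uniformly bounded'' into a strong constraint on $V$: a free graded-commutative algebra on generators in degrees $\{d_i\}$ has uniformly bounded Betti numbers \emph{if and only if} it has exactly one generator, which must lie in even degree (any odd-degree generator contributes an exterior factor whose top degree is finite, and two or more even generators, or one even together with anything else, already force the Poincar\'e series to grow without bound). Hence $s^{-1}V$ is concentrated in a single even degree $2k$, i.e.\ $V_{B\Orb}$ is one-dimensional, concentrated in the single odd degree $2k+1$.

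Next I would pull this back to the rational homotopy of $\Orb=B\Orb$ (recall $H^*_{\orb}(\Orb;\QQ)=H^*(B\Orb;\QQ)$ coincides with the cohomology of the underlying space, and rationally we may replace $B\Orb$ by $\Orb$). Since $V^d_{B\Orb}\cong \Hom(\pi_d(B\Orb),\QQ)$, the space $B\Orb$ is rationally equivalent to a space with a single rational homotopy generator in degree $2k+1$, i.e.\ rationally an odd sphere $\Ss^{2k+1}$. In the odd-dimensional case one must now identify $2k+1$ with $n=\dim\Orb$: the top nonvanishing rational cohomology of a closed orientable $n$-orbifold is in degree $n$ (Poincar\'e duality for the underlying space, which is a rational homology manifold), so the single even generator $x'=x\in H^{2k}(\Omega B\Orb;\QQ)$ satisfies $2k=n-1$, giving $H^*(\Omega B\Orb;\QQ)\cong\QQ[x']$ with $\deg x'=n-1$, as claimed. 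For the even-dimensional case the argument must be slightly refined, since a space whose rational cohomology is singly generated in even degree $2\ell$ is rationally an elliptic space of the projective type: here $\Orb$ has the rational cohomology ring $\QQ[w]/(w^{m+1})$ for some even-degree class $w$, its minimal model has two generators $y$ (in degree $\deg w$, closed) and $x$ (in the odd degree $(m+1)\deg w -1$, with $dx=y^{m+1}$), and desuspending gives $H^*(\Omega B\Orb;\QQ)\cong \wedge(y')\otimes\QQ[x']$ with $\deg y'=\deg w-1$ and $\deg x'=(m+1)\deg w -2$; one then checks the dimension relation $\dim\Orb=\deg x'-\deg y'+1$ directly from these degrees.

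The main obstacle I expect is the even-dimensional analysis: the implication ``uniformly bounded loop-space Betti numbers $\Rightarrow$ $H^*(\Orb;\QQ)$ singly generated'' is not immediate from the loop-space minimal model alone, because a priori $B\Orb$ could carry higher rational homotopy that is killed in cohomology by the differential $d$. The clean way around this is to invoke rational ellipticity: uniformly bounded Betti numbers of $\Omega B\Orb$ force $B\Orb$ to be rationally elliptic (otherwise, by the dichotomy theorem, the loop-space homology would grow exponentially), and a rationally elliptic space has a two-generator minimal model precisely when its rational cohomology is singly generated, giving exactly the two model shapes above. In the odd case this dichotomy immediately rules out everything except the rational odd sphere, so the odd case is genuinely the easy one; the bookkeeping of the degree relation $\dim\Orb=\deg x'-\deg y'+1$ in the even case is where I would be most careful. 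The remaining verifications---that the desuspension formula is applicable (it requires simple connectivity of $B\Orb$, which holds since $\Orb$ is simply connected as an orbifold so $\pi_1(B\Orb)=\pi_1^{\orb}(\Orb)=0$), and the Poincar\'e-duality identification of the top degree---are routine.
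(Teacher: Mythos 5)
Your central combinatorial claim is false, and the rest of the argument collapses with it. You assert that a free graded-commutative algebra has uniformly bounded Betti numbers if and only if it has exactly one generator, necessarily of even degree, and in particular that ``one even generator together with anything else'' forces unbounded growth. But $\wedge (y')\otimes\QQ[x']$ has Poincar\'e series $(1+t^{\deg y'})/(1-t^{\deg x'})$, whose coefficients are bounded by $2$ --- and this algebra is exactly the answer your own second bullet point requires in the even-dimensional case. The only thing bounded Betti numbers of $\Omega B\Orb$ give you directly is that the loop-space model has \emph{at most one even-degree generator}, i.e.\ that the minimal model $\Lambda V$ of $B\Orb$ has at most one \emph{odd}-degree generator ($r\le 1$ in the paper's notation); it says nothing by itself about the number $s$ of even-degree generators of $V$. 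Your deduction that $V$ is one-dimensional and concentrated in a single odd degree therefore both overshoots the odd case and renders the even case vacuous, so the proposal is internally inconsistent: the second paragraph analyzes a situation your first paragraph has already (incorrectly) excluded.

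The missing step is the bound $s\le 1$. The paper obtains $r\ge 1$ from finite-dimensionality of $H^*(\Orb;\QQ)$ (with no odd generator the cohomology would be a polynomial ring), and then bounds $s$ by passing to the associated pure Sullivan algebra $(\Lambda V,d_\sigma)$: by \cite[Prop.~32.4]{FHT}, $H^*(\Lambda V,d)$ is finite-dimensional iff $H^*(\Lambda V,d_\sigma)=\QQ[y_1,\dots,y_s]/(P)$ is, and a polynomial ring in $s$ variables modulo the single relation $P=d_\sigma x$ is finite-dimensional only when $s\le 1$. Your fallback via rational ellipticity could in principle be made to work --- ellipticity gives $\dim V^{\mathrm{even}}\le\dim V^{\mathrm{odd}}\le 1$ by the homotopy Euler characteristic inequality --- but as written you invoke ``a rationally elliptic space has a two-generator minimal model precisely when its rational cohomology is singly generated'' without having established either side of that equivalence, and you would also need to justify $\dim V<\infty$ before ellipticity even makes sense (bounded loop-space Betti numbers alone do not rule out infinitely many odd loop-space generators in sparse degrees). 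The degree bookkeeping in both cases is fine once the model shapes $(\wedge(x),0)$ and $(\wedge(x)\otimes\QQ[y],\,dx=y^m)$ are actually in hand.
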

\begin{proof}
First of all, recall that the natural map
\[
B\Orb=(F\Orb\times E\SOr(n))/\SOr(n)\to F\Orb/\SOr(n)= \Orb
\]
induced by projection on the first factor is a rational homotopy equivalence (it induces isomorphisms in rational cohomology, and thus in rational homotopy as well). Therefore $\Orb$ and $B\Orb$ are rationally homotopy equivalent, and thus so are $\Omega \Orb$ and $\Omega B\Orb$. In particular, $H^*_\orb(\Orb;\QQ)=H^*(B\Orb;\QQ)\simeq H^*(\Orb;\QQ)$. For the rest of this section, we will use $\Orb$ and $\Omega\Orb$ instead of $B\Orb$ and $\Omega B\Orb$.

Because $H^*(\Omega \Orb;\QQ)$ is a Hopf algebra, it follows from the Borel--Hopf Theorem (Theorem \ref{T:Borel-Hopf}) that $H^*(\Omega \Orb;\QQ)$ is in fact a free graded algebra, and in particular the minimal model for $\Omega \Orb$ is isomorphic to $\big(H^*(\Omega \Orb;\QQ), 0\big)$.

Let the minimal model for $\Orb$ be of the form $(\Lambda V=\wedge(x_1,\ldots, x_r)\otimes \QQ[y_1,\ldots, y_s],d)$, where the $x_i$ have odd degree and the $y_i$ have even degree. In this case,
\[
V=\textrm{span}(x_1,\ldots, x_r, y_1,\ldots y_s).
\]
Since $V^d\simeq \pi_d(\Orb)\otimes \QQ$ and the homotopy groups of $\Omega \Orb$ are just the ones of $\Orb$ scaled by one, it follows that the minimal model of $\Omega \Orb$ can be also written as
\[
(\Lambda \overline{V},0)=(\QQ[x'_1,\ldots, {x}'_r]\otimes \wedge({y}'_1,\ldots , {y}'_s),0)\qquad \deg {x}'_i=\deg x_i-1, \,\deg {y}'_i=\deg y_i-1.
\]

By the previous section, the cohomology groups of $\Omega \Orb$, which coincide with the graded summands of $\Lambda \overline{V}$, have uniformly bounded dimension. This can only happen if the number of even degree generators of $\Lambda \overline{V}$ is at most 1 (otherwise $\Lambda \overline{V}$ would contain a polynomial algebra of the form $\QQ[{x}'_1,{x}'_2]$ for which the dimension of each degree grows unbounded). In other words, $r\leq 1$.

On the other hand, since $r$ denotes the number of \emph{odd degree} generators for the minimal model $\Lambda V$ of $\Orb$, it follows that $r\geq 1$: in fact, if there were no odd degree generators, we would have $\Lambda V=\QQ[y_1,\ldots, y_s]$, and $d=0$ for degree reasons, which would give $H^*(\Orb;\QQ)=H^*(\Lambda V,d)=(\QQ[y_1,\ldots, y_s],0)=\Lambda V=\QQ[y_1,\ldots, y_s]$, contradicting the finite dimensionality of $H^*(\Orb;\QQ)$.

Therefore, $\Lambda V=\wedge(x)\otimes \QQ[y_1,\ldots, y_s]$. We now claim that $s\leq 1$. In fact, define the \emph{pure Sullivan algebra associated to $(\Lambda V,d)$} by  $(\Lambda V, d_\sigma)$, where $d_\sigma(y_i)=0$ and $d_\sigma(x)\in \QQ[y_1,\ldots, y_s]$ defined by
\[
\operatorname{im}(d-d_\sigma)\subseteq \QQ[y_1,\ldots, y_s]\otimes \wedge^{>0}(x)
\]
By \cite{FHT}, Proposition 32.4, $H^*(\Lambda V,d)$ is finite dimensional if and only if $H^*(\Lambda V,d_\sigma)$ is finite dimensional. However, we have
\[
H^*(\Lambda V,d_\sigma)=\QQ[y_1,\ldots , y_s]/(P),\qquad\textrm{ where }d_{\sigma}x= P(y_1,\ldots, y_s)
\]
This can only be finite dimensional if $s\leq 1$, and we have two cases for $(\Lambda V,d)$:
\begin{enumerate}
\item Either $s=0$ and $(\Lambda V,d)=(\wedge(x),0)$.
\item Or $s=1$, and the only possibility is $(\Lambda V,d)=(\wedge(x)\otimes\QQ[y],dx=y^m, dy=0)$ for $m=\dim \Orb/ \deg y+1$.
\end{enumerate}
These two cases now correspond to the two cases in the assertion, i.e.~to the different parities of $\dim \Orb$. Indeed, the model in (i) is of odd dimension, the one from (ii) is even dimensional. Passing to the corresponding loop space models actually finishes the proof. It only remains to remark that the loop space cohomology in both cases obviously has universally bounded Betti numbers.
\end{proof}

\section{Computing $H^*(\Omega B\Orb;\ZZ_p)$ ($p$ prime), for simply-connected Besse orbifolds}\label{S:Zp-cohomology}

In this section we consider the cohomology of $\Omega B\Orb$ with coefficients in $\ZZ_p$, $p$ a prime. The methods of this sections follow closely the arguments of McCleary in \cite{McCl}. The whole section is devoted to proving the following:

\begin{prp}\label{P:HBOmOZp}
Let $\Orb$ be a simply connected Besse orbifold of odd dimension $n$. Then $H^*(\Omega B\Orb;\ZZ_p)\simeq \ZZ_p[x_{n-1}]$ as graded vector spaces over $\ZZ_p$, for some generator $x_{n-1}$ of degree $n-1$.
\end{prp}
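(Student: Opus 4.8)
The plan is to exploit the fact that $A:=H^*(\Omega B\Orb;\ZZ_p)$ is a connected, associative, graded-commutative Hopf algebra of finite type, whose Betti numbers are uniformly bounded by Theorem \ref{T:uniformly-bounded} (via the weak equivalence of Proposition \ref{P:weak-hom-equiv}), and to feed this into the Hopf-algebraic structure theory while comparing with the rational computation of Proposition \ref{ratellprop}. Since $\dim_{\ZZ_p}H^k(\Omega B\Orb;\ZZ_p)\geq \dim_{\QQ}H^k(\Omega B\Orb;\QQ)$ by universal coefficients, and since Proposition \ref{ratellprop} gives (crucially using that $n$ is \emph{odd}) the purely polynomial answer $H^*(\Omega B\Orb;\QQ)\cong \QQ[x_{n-1}]$, the claimed statement is equivalent to showing that the mod-$p$ Poincaré series equals $1/(1-t^{n-1})$, i.e.\ that there is no $p$-torsion producing extra classes. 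So the real content is to rule out torsion and superfluous generators.

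First I would apply the Borel--Hopf Theorem \ref{T:Borel-Hopf} to write $A\cong\bigotimes_i A_i$ as a tensor product of monogenic Hopf algebras, each an exterior algebra on an odd generator, a polynomial algebra on an even generator, or a truncated polynomial algebra $\ZZ_p[x]/(x^{p^k})$ on an even generator. The uniform bound on Betti numbers immediately forces at most one polynomial factor, since two polynomial generators would produce a subalgebra $\ZZ_p[x_1,x_2]$ whose graded dimensions grow without bound; its degree is then pinned to $n-1$ by the rational comparison above. What remains is to control the (a priori infinitely many) exterior and truncated generators and to show they contribute nothing to the Poincaré series.

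For this I would equip $A$ with the mod-$p$ Bockstein $\beta$, a degree-$1$ derivation compatible with the coproduct, making $(A,\beta)$ a differential Hopf algebra. Passing to the associated biprimitive Hopf algebra ${}_0EE_0(A)$, which is isomorphic to $A$ as a graded vector space and is biprimitive by \cite[Corollary~2.6]{Bro63}, and checking that $\beta$ descends to a differential on it, I would invoke Browder's structure Theorem \ref{T:biprimitive-hopf} to decompose ${}_0EE_0(A)\cong\bigotimes_i K_i\otimes\bigotimes_j L_j\otimes Q$. A direct computation shows that the Bockstein homology of each $K_i$ and each $L_j$ is the exterior algebra on a single \emph{odd}-degree class $y z^{p-1}$ (resp.\ $z'^{\,p-1}y'$), while $Q$ (on which the differential vanishes) contributes itself; in particular the lone polynomial generator of degree $n-1$ reappears inside $Q$ as the biprimitive model $\bigotimes_{k\geq 0}\ZZ_p[y_k]/(y_k^p)$ with $|y_k|=(n-1)p^k$, whose Poincaré series $\prod_{k\geq 0}\frac{1-t^{(n-1)p^{k+1}}}{1-t^{(n-1)p^{k}}}$ telescopes to $1/(1-t^{n-1})$.

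The final and hardest step is to match this against the Bockstein spectral sequence, whose $E_\infty$-page is the mod-$p$ reduction of the free part of $H^*(\Omega B\Orb;\ZZ)$ and hence equals $\ZZ_p[x_{n-1}]$ by Proposition \ref{ratellprop}. Since $n$ is odd this limit is concentrated in even degrees, so every odd-degree class surviving from a $K_i$ or $L_j$ must die under a higher Bockstein, and the uniform Betti bound together with the biprimitive normal form should force all the $K_i$, $L_j$, and every exterior/truncated factor of $Q$ other than those assembling the degree-$(n-1)$ polynomial generator to cancel in pairs. I expect the main obstacle to be exactly this global bookkeeping across the whole spectral sequence: Theorem \ref{T:biprimitive-hopf} controls only the \emph{single} differential $\beta$, whereas matching the limit $\ZZ_p[x_{n-1}]$ requires controlling all higher Bocksteins simultaneously and verifying that no torsion generator can persist without violating either the uniform Betti bound or the purely even-degree, singly generated rational answer. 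Once this is settled, the Poincaré series of $A$ is forced to be $1/(1-t^{n-1})$, giving $H^*(\Omega B\Orb;\ZZ_p)\cong\ZZ_p[x_{n-1}]$ as graded vector spaces.
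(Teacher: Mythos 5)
There is a genuine gap, and you have located it yourself: the ``global bookkeeping across the whole spectral sequence'' that you defer to the end is not a technical afterthought but the entire content of the proposition. Your setup (Borel--Hopf, at most one polynomial generator pinned to degree $n-1$ by the rational computation, the Bockstein spectral sequence with $E_\infty\simeq\ZZ_p[x_{n-1}]$, and Browder's biprimitive decomposition into $K_i$, $L_j$, $Q$ factors) coincides with the paper's framework, but the suggestion that the extra factors ``should cancel in pairs'' under higher Bocksteins is not an argument and is not how the cancellation is actually controlled. The difficulty is that killing an odd-degree class $y$ does not simply remove it: in a $K_i$-factor the Bockstein homology retains the \emph{new} odd-degree primitive $[yz^{p-1}]$ of degree $p(|y|+1)-1$, which must in turn die at some later stage, and so on. Nothing in the biprimitive normal form at a single stage prevents this from continuing forever, so ``pairwise cancellation'' cannot be verified stage by stage.

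The paper resolves this with a counting argument rather than a cancellation argument. It proves (Lemma \ref{L:growth}) that a biprimitive Hopf algebra containing \emph{two} linearly independent sequences of homogeneous primitives, each generating a subalgebra nonzero in an arithmetic progression of degrees, must have unboundedly growing Betti numbers. One such sequence always exists, obtained by lifting the generators $z_i$ of $_0EE_0(B_\infty)=\bigotimes_i\ZZ_p[z_i]/(z_i^p)$, $|z_i|=(n-1)p^i$, back to $_0EE_0(B_1)$. The proof then assumes for contradiction that some $B_k$ has an odd-degree class and manufactures a \emph{second} such sequence via the $\infty$-implications mechanism of Browder and McCleary: in Case I (no $L_j$-factors) the iteration $y\mapsto x_1=dy\mapsto y_2=[yx_1^{p-1}]\mapsto x_2=dy_2\mapsto\cdots$ produces primitives $x_i$ of degree $(|y|+1)p^i$ (Lemma \ref{L:noLj}); in Case II (some $L_j$-factor) one must pass to the dual homology Bockstein spectral sequence and invoke McCleary's Theorem 3, together with the lifting and primitivity lemmas of Appendix \ref{sec:mani_hopf_algbras}, to extract the analogous sequence. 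Since the second sequence dies before $E_\infty$ it is independent of the first, and Lemma \ref{L:growth} contradicts Theorem \ref{T:uniformly-bounded}. Hence $(B_k)^{odd}=0$ for all $k$, the lacunary principle kills all differentials, and $B_1=B_\infty=\ZZ_p[x_{n-1}]$. To complete your proof you would need to supply both the growth lemma and the two-case $\infty$-implications construction; without them the claim that no torsion generator can persist remains unproved.
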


As noted by Browder, there is a spectral sequence of Hopf algebras $(B_r, d_r)$ (the \emph{Bockstein spectral sequence}) with $B_1=H^*(\Omega B\Orb;\Zp)$ and $B_{\infty}\simeq (H^*(\Omega B\Orb;\ZZ)/\textrm{torsion})\otimes \ZZ_p$, and where all differentials have degree $1$. By the results in the previous section, $(H^*(\Omega B\Orb;\ZZ)/\textrm{torsion})\simeq \ZZ[x_{n-1}]$ and thus $$B_{\infty}\simeq \ZZ_p[x_{n-1}],$$ so we need to prove that $d_r=0$ for all $r$. We prove this by showing that $(B_r)^{odd}=0$ for all $r$, so that the result follows by ``lacunary principle''.

Recall that by Browder (cf. \cite{Bro61}), for every $B_k$ there is a spectral sequence of spectral sequences $_sEE_r(B_k)$, with the following properties:
\begin{enumerate}
\item $_0EE_0(B_k)$ is a ``biprimitive form'' of $B_k$---in particular, $_0EE_0(B_k)$ is a biprimitive Hopf algebra (cf. definitions in Section \ref{SS:Hopf}) isomorphic to $B_k$ as a graded vector space.
\item For every $r, k$ fixed, there are differentials $d_s$ such that $(_sEE_r(B_k),d_s)$ is a spectral sequence, converging to $_0EE_{r+1}(B_k)$.
\item For every $k$ fixed, there are differentials $d_r$ such that $(_0EE_r(B_k),d_r)$ is a spectral sequence converging to $_0EE_0(B_{k+1})$.
\end{enumerate}

Since each $_sEE_r(B_k)$ is biprimitive by \cite[Lem.~3.2]{Bro63}, by Theorem \ref{T:biprimitive-hopf} it is isomorphic, as a differential Hopf algebra, to a product
\begin{equation}\label{E:biprimitive-ss}
\bigotimes_{i=1}^IK_i\otimes \bigotimes_{j=1}^JL_j\otimes Q
\end{equation}
where:
\begin{itemize}
\item  $K_i=\wedge(y)\otimes \Z_p[z]/(z^p)$, where $y,z$ are primitive, $|y|$ odd and $d_sy=z$.
\item  $L_j=\wedge(y')\otimes \Z_p[z']/(z'^p)$, where $y',z'$ are primitive, with $|y'|$ odd and $d_sz'=y'$.
\item Q is primitively generated, and $d_sQ=0$.
\end{itemize}

Recall that the set of primitive elements in a Hopf algebra $A$ is a vector space. We will see that, if $A$ is biprimitive, then any homogeneous basis of this subspace forms a set of generators of $A$ in the tensor product description above.
\\

We now prove that $(B_k)^{odd}=0$ for all $k$, by contradiction. Supposing that some $B_k$ contains elements of odd degree, we want to produce a subalgebra of $B_1=H^*(\Omega B\Orb,\ZZ_p)$ whose dimension grows unbounded with respect to the degree, contradicting Theorem \ref{T:uniformly-bounded}. We do this by applying the following:

\begin{lem}\label{L:growth}
Suppose $A$ is a (associative and commutative) biprimitive Hopf algebra over $\ZZ_p$, and suppose that we have two linearly independent sequences $a=\{a_1,a_2,\ldots\}$, $b=\{b_1,b_2,\ldots\}$ of homogeneous primitive elements of $A$, such that the subalgebras $A_a$, $A_b$ generated by $a$ and $b$ respectively, are nonzero in degrees forming arithmetic sequences. Then the dimension of $A$ grows unbounded in the degree.
\end{lem}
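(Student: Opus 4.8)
The plan is to exhibit inside $A$ a graded subspace whose degree-$N$ dimension is unbounded as $N\to\infty$; the natural candidate is the product $A_a\cdot A_b$ of the two subalgebras, and the crux is that this product is \emph{free}, i.e.\ the multiplication map $A_a\otimes A_b\to A$ is injective in each degree. To see this I would invoke the structural input: since $A$ is a connected, graded-commutative, associative \emph{biprimitive} Hopf algebra over the perfect field $\ZZ_p$, the discussion preceding the lemma together with the Borel--Hopf Theorem~\ref{T:Borel-Hopf} lets me choose a homogeneous basis $\mathcal{B}$ of the space of primitives $P(A)$ realizing $A\cong\bigotimes_{x\in\mathcal{B}}A_x$ as a tensor product of the monogenic Hopf subalgebras $A_x$. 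Because $a\cup b$ is a linearly independent family of primitive elements, I may arrange $a\cup b\subseteq\mathcal{B}$; then the subalgebra generated by the $a_i$ is $A_a=\bigotimes_i A_{a_i}$, that generated by the $b_j$ is $A_b=\bigotimes_j A_{b_j}$, and the inclusion of the sub-tensor-product
\[
A_a\otimes A_b=\Big(\bigotimes_i A_{a_i}\Big)\otimes\Big(\bigotimes_j A_{b_j}\Big)\hookrightarrow\bigotimes_{x\in\mathcal{B}}A_x=A
\]
is injective and degree-preserving. Hence $\dim A_N\geq\dim(A_a\otimes A_b)_N=\sum_{i+j=N}\dim(A_a)_i\cdot\dim(A_b)_j$ for every $N$.

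It then remains to count. By hypothesis there are integers $d_a,d_b\geq 0$ and $e_a,e_b\geq 1$ with $(A_a)_i\neq 0$ for all $i\in\{d_a+ke_a:k\geq 0\}$ and $(A_b)_j\neq 0$ for all $j\in\{d_b+ke_b:k\geq 0\}$. Setting $L=\operatorname{lcm}(e_a,e_b)$ and $N_m=d_a+d_b+Lm$ for $m\geq 0$, I would observe that for each $s\in\{0,1,\ldots,m\}$ the degrees $i_s=d_a+Ls$ and $j_s=d_b+L(m-s)$ satisfy $i_s+j_s=N_m$, and since $Ls$ and $L(m-s)$ are nonnegative multiples of $e_a$ and $e_b$ respectively we have $(A_a)_{i_s}\neq 0$ and $(A_b)_{j_s}\neq 0$. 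The pairs $(i_s,j_s)$ are pairwise distinct, so each contributes at least $1$ to the sum above, whence $\dim A_{N_m}\geq m+1$. Letting $m\to\infty$ forces $\dim A$ to grow unbounded in the degree, as claimed.

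The main obstacle is the structural step, namely promoting the abstract Borel--Hopf factorization to one whose tensor factors are generated by the \emph{prescribed} independent family $a\cup b$, so that $A_a\cdot A_b$ is genuinely free (isomorphic to $A_a\otimes A_b$) rather than merely a quotient of it. This is precisely where both the biprimitivity of $A$ and the linear independence of $a\cup b$ are used, via the claim recorded just before the lemma. By contrast the counting step is elementary: once the independent factorization is in hand, the two arithmetic-progression hypotheses together with the $\operatorname{lcm}$ bookkeeping immediately yield the linear-in-$m$ lower bound, and one does not even need the precise (truncated-polynomial or exterior) shape of the individual monogenic factors $A_x$.
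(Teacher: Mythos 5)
Your overall strategy coincides with the paper's: extend the linearly independent family $a\cup b$ of homogeneous primitives to a basis of the primitive space of $A$, realize $A$ as a tensor product of monogenic algebras on that basis so that the subalgebra generated by $a\cup b$ is the free product $A_a\otimes A_b$ sitting injectively inside $A$, and then count. Your counting step is carried out correctly, and in fact more explicitly than in the paper, which only says it ``is easy to check.''

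The gap is in the structural step, which you correctly identify as the crux but do not actually prove. The statement you lean on --- that any homogeneous basis of the primitive subspace can be taken as the set of tensor-product generators --- is only \emph{announced} in the text preceding the lemma (``We will see that\ldots''); it is proved inside the proof of this very lemma, so it is not available to you as a prior input. Concretely, what must be checked is the following: $A$ has the form $\bigotimes_i\ZZ_p[x_i]/(x_i^{d_i})$ with the $x_i$ primitive and $d_i=2$ or $p$ according to parity of degree (this shape comes from biprimitivity, not from Borel--Hopf alone, which does not supply primitive generators), and if $\{u_1,\ldots,u_m\}$ is another homogeneous basis of the degree-$d$ part of the primitives, one must verify that $A$ is still the tensor product of the truncated algebras on the $u_i$ and the remaining $x_j$ --- i.e.\ that the ideal of relations is independent of the chosen basis of primitives. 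In odd degrees this is immediate from skew-commutativity. In even degrees it genuinely requires an argument: the paper uses that $x\mapsto x^p$ is additive in characteristic $p$, so that for $u_i=\sum_j\alpha_{ij}x_j$ with $(\alpha_{ij})$ invertible the ideal $(x_1^p,\ldots,x_m^p)$ coincides with $(u_1^p,\ldots,u_m^p)$. Without this verification the injectivity of $A_a\otimes A_b\to A$ --- the whole point of the lemma --- is not established; with it supplied, your proof is complete and is essentially the paper's.
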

\begin{proof}
Since $A$ is biprimitive, it has the form $A=\bigotimes_{i}\Zp[x_i]/(x_i^{d_i}),$ where the $x_i$'s are primitive elements and $d_i$ is either 2 if $\deg(x_i)$ is odd, or $p$ if $\deg(x_i)$ is even, cf. for example the discussion in \cite[Corollary 2.2]{Bro63}. Let $P\subset \bar{A}$ be the set of primitive elements, and let $P\to F^1A/F^2A\simeq \textrm{span}\{x_i\}$ be the projection map. This is both surjective (since $A$ is primitive) and injective ($A$ is coprimitive) and therefore $P= \textrm{span}\{x_i\}$. We claim that for any basis of $P$ consisting of homogeneous primitive elements $\{u_1, u_2, \ldots\}$, we can write
\begin{equation}\label{E:Model-of-A}
A=\bigotimes_{i}\Zp[u_i]/(u_i^{d_i})
\end{equation}
It is enough to prove it for a given degree: given $\{u_1,\ldots, u_m\}$ and $\{x_1,\ldots, x_m\}$ bases of $P_d$ (degree $d$ summand of $P$), we need to show that the algebra generated by $$\{u_1,\ldots u_m, x_{m+1}, x_{m+2}, \ldots\}$$ is of the same shape as above. Also, this is clear in odd degrees, due to the fact that the product is skew commutative. In even degrees, this is due to the fact that, with $\ZZ_p$ coefficients, the map $A\to A$, $x\mapsto x^p$ is an algebra homomorphism: the kernel of the natural map $\phi: \ZZ_p[x_1,\ldots, x_m]\to A$ is generated by $x_i^p$ and thus, letting $u_i=\sum_{ij}\alpha_{ij}x_j$ for some $A=(\alpha_{ij})\in \mathrm{GL}(m,\ZZ_p)$ and letting $B=(\beta_{ij})$ the inverse of $A$, we have that the kernel of $\phi$ is the ideal generated by $\sum_j(\beta_{ij}u_j)^p=\sum_j\beta_{ij}u_j^p$, which is the same as the ideal generated by the $u_j^p$.

Therefore, given the linearly independent set $S=\{a_1, b_1, a_2, b_2,\ldots\}$ of homogeneous elements of $P$, this can be extended to a basis $\{u_1, u_2, \ldots\}$ of homogeneous elements of $P$, such that $A$ can be written as in \eqref{E:Model-of-A}. In particular, the algebras $A_a$, $A_b$ are of this form as well, and the algebra generated by $S$ is isomorphic to $A_a\otimes A_b$. Since $A_a$, $A_b$ have elements in degrees forming arithmetic sequences, it is easy to check that the dimensions of the degree components of $A_a\otimes A_b$ (and thus the ones of $A$) grow unbounded.
\end{proof}

One sequence, satisfying the properties of the lemma above is readily available. Notice in fact that, since $B_{\infty}$ is a polynomial ring, its biprimitive form is

\[
_0EE_0(B_\infty)=\bigotimes_{i=1}^\infty\Zp[z_i]/(z_i^p), \qquad |z_i|=(n-1)p^i
\]

The elements $z_i$ can be lifted to a set of primitive elements
\begin{equation}\label{E:a}
a=\{\tilde z_1, \tilde z_2,\ldots\}\subseteq{} _0EE_0(B_1),
\end{equation}
generating a subalgebra $A_a=\bigotimes_{i=1}^\infty\Zp[\tilde{z}_i]/(\tilde{z}_i^p)$ which is nonzero in degrees multiples of $(n-1)p$ and thus satisfies the condition of the lemma.
\\

To finish the proof, we thus need to produce the second set $b$. We will consider two cases.

\subsection{Case I: the spectral sequence $_sEE_r(B_k)$ does not contain factors of type $L_j$}

Recall that we are assuming, by contradiction argument, that there exists some element of odd degree in some $B_r$, and then also in $_0EE_0(B_r)$. In this case the following lemma applies for $(s_0,r_0,k_0)=(0,0,r)$.
\begin{lem}\label{L:noLj}
Suppose that the biprimitive spectral sequence $_sEE_r(B_k)$ only contains factors of type $K_i$ and $Q$. If there is an element $y\in{}_{s_0}EE_{r_0}(B_{k_0})$ of odd degree, then there exists a sequence $b=\{x_1,x_2,\ldots\}$ of primitive elements of degree $|x_i|=(|y|+1)p^i$ in $_0EE_0(B_1)$, generating a subalgebra $A_b$ isomorphic, as a vector space, to a polynomial algebra $\Zp[w]$, $|w|=|y|+1$. Moreover, every $x_i$ gets killed at some stage in the spectral sequence $_sEE_r(B_k)$.
\end{lem}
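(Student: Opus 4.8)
The plan is to produce the $x_i$ as a tower of iterated $p$-th powers arising from a single odd transgressive generator, using Kudo's transgression theorem together with the Hopf-algebra structure of the pages. Write $m := |y|+1$, which is even since $|y|$ is odd; the target degrees $|x_i| = m p^i$ are exactly those obtained by iterating the mod $p$ power operation, and the element $z := d_s y$ will play the role of $x_0$.

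First I would normalize the starting element. Since we are in Case I, every page ${}_sEE_r(B_k)$ decomposes under Theorem \ref{T:biprimitive-hopf} as $\bigotimes_i K_i \otimes Q$, with no factors of type $L_j$. In such a decomposition the odd-degree primitives are precisely the generators $y$ of the $K_i$ (for which $d_s y = z$ is an even primitive of degree $|y|+1$ with $z^p=0$) together with the odd generators of $Q$ (permanent cycles at that stage). An odd generator of $Q$ cannot survive to $B_\infty \simeq \ZZ_p[x_{n-1}]$, which is concentrated in even degrees; tracing it forward through the differentials $d_r$ of property (iii) and $d_s$ of property (ii) shows it must eventually enter a $K$-factor and support a nonzero differential. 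Hence, after passing to a later page if necessary, we may assume $y$ is a $K$-type generator with $d_s y = z$, where $z$ is an even primitive of degree $m$.

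Next I would build the geometric tower. Working in $B_1 = H^*(\Omega B\Orb;\ZZ_p)$, which is an algebra over the mod $p$ Steenrod algebra, the transgression relation $d_s y = z$ together with Kudo's transgression theorem yields that the iterated $p$-th powers of a lift of $z$, sitting in strictly increasing filtration, are nonzero and primitive, of degrees $m p^i$ for $i\geq 0$. Passing to the biprimitive associated graded ${}_0EE_0(B_1)$, these give primitive generators $x_i$ with $x_i^p = 0$, so that the subalgebra they generate is $A_b = \bigotimes_{i\geq 0}\ZZ_p[x_i]/(x_i^p)$; its Poincaré series telescopes to $(1-t^m)^{-1}$, whence $A_b$ is, as a graded vector space, isomorphic to $\ZZ_p[w]$ with $|w| = m = |y|+1$, and in particular $A_b$ is nonzero in the arithmetic progression of multiples of $m$. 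Finally, naturality of the power operation and of the transgression shows that each $x_i$ again transgresses (or is hit) in ${}_sEE_r(B_k)$, so no $x_i$ is a permanent cycle; this establishes the last sentence of the lemma and, since the $\tilde z_j$ of \eqref{E:a} are permanent, guarantees that $b$ is linearly independent from the sequence $a$, as required to feed Lemma \ref{L:growth}.

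The main obstacle will be the bookkeeping across the three indices $(s,r,k)$: one must verify that the $p$-th power operation and Kudo's theorem are compatible with both the internal differentials $d_s$ of property (ii) and the Bockstein-type differentials $d_r$ of property (iii) linking $B_k$ to $B_{k+1}$, so that the tower of $p$-th powers genuinely descends to ${}_0EE_0(B_1)$ rather than existing only on the later page where $y$ was located. This is precisely where the detailed structural results of Browder \cite{Bro61,Bro63} must be invoked, and where care is needed to keep the produced elements primitive, nonzero, and of the asserted degrees at each descent step.
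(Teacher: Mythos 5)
Your overall skeleton (normalize $y$ to an odd primitive supporting a differential, build a tower of even primitives in degrees $(|y|+1)p^i$, observe that the truncated algebra they generate has the Poincar\'e series of $\Zp[w]$, and lift everything back to $_0EE_0(B_1)$ via the structural lemmas) matches the paper's, but the engine you use to produce the tower does not work here. Kudo's transgression theorem is a statement about the Serre spectral sequence of a fibration and its compatibility with Steenrod operations; the pages $_sEE_r(B_k)$ are Browder's algebraically defined biprimitive spectral sequences attached to the Bockstein spectral sequence of the Hopf algebra $H^*(\Omega B\Orb;\ZZ_p)$, their differentials are not transgressions in any fibration, and no Steenrod-module structure is in play. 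Consequently your assertion that ``the iterated $p$-th powers of a lift of $z$ are nonzero and primitive'' is exactly the nontrivial content to be established, and it cannot be taken for granted: on each page the even primitive $x_1$ of a $K$-factor satisfies $x_1^p=0$, and a lift of it to $B_1$ may well have vanishing $p$-th power, in which case the tower does not continue through powers at all.

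The paper's mechanism is purely Hopf-algebraic and avoids this. Since $B_\infty$ is concentrated in even degrees and there are no $L_j$-factors, the odd primitive $y$ must support a differential $dy=x_1$ with $x_1$ primitive of degree $|y|+1$. Passing to homology of the factor $\Lambda(y)\otimes\Zp[x_1]/(x_1^p)$ creates a \emph{new} odd class $y_2=[yx_1^{p-1}]$ of degree $p(|y|+1)-1$, which is primitive by Lemma \ref{L:primitive}; this class must again die, and absent $L$-factors it again supports a differential onto a new even primitive $x_2$ of degree $p(|y|+1)$ --- which need not be any power of $x_1$. Iterating, and then invoking Lemma \ref{L:lift} to pull the $x_i$ (which live on various later pages) back to primitive elements of $_0EE_0(B_1)$, gives the sequence $b$. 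If you want a power-operation statement in the spirit of Kudo, the correct citation is Browder's implication machinery (\cite{Bro61,Bro63}, used in the paper's Case II via McCleary), whose conclusion is precisely the dichotomy ``either the $p$-th power carries the implication or a new element built from $[yx^{p-1}]$ does''; your proposal collapses this dichotomy onto its first branch without justification.
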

\begin{proof}
We can assume that $y$ is primitive. Since $B_{\infty}$ does not have elements of odd degree, the element $y$ must be killed at some stage $(s_1,r_1,k_1)$. Since there are no factors of type $L_j$, the only way this can happen is that there is some $x_1\in{}_{s_1}EE_{r_1}(B_{r_1})$ such that $dy=x_1$. Since the differential commutes with the coproduct, the element $x_1$ is primitive as a differential of a primitive element. Moreover, $|x_1|=|y|+1$, and in ${}_{s_1+1}EE_{r_1}(B_{r_1})$ there is a new element of odd degree being created, namely $y_2=[yx_1^{p-1}]$. By Lemma \ref{L:primitive} below, $y_2$ is primitive, and $|y_2|=|y|+(p-1)(|y|+1)=p(|y|+1)-1$. By the same reason as before, at some later time $(s_2,r_2,k_2)$ we can find $x_2\in{}_{s_2}EE_{r_2}(B_{r_2})$ such that $dy_2=x_2$. Again, $x_2$ is primitive and $|x_2|=|y_2|+1=p(|y|+1)$. The lemma now follows by inductively repeating the same steps above and by lifting the $x_i$ to $_0EE_0(B_1)$ via an application of Lemma \ref{L:lift}.
\end{proof}

Because the elements in the sequence $b$ in Lemma \ref{L:noLj} above do not survive to $_0EE_0(B_{\infty})$, they are linearly independent of the elements in the sequence $a$ in \eqref{E:a}. Lemma \ref{L:growth} can then be applied, contradicting the uniform boundedness of $_0EE_0(B_1)$. This proves Proposition \ref{P:HBOmOZp} in this first case.

\subsection{Case II: the spectral sequence $_sEE_r(B_k)$ contains a factor of type $L_j$}

We can now assume that at some stage in the biprimitive spectral sequence $_sEE_r(B_k)$, we have a factor of type $L_j$. In other words, there is an element $x_{s,r}\in{}_sEE_r(B_k)$ of even degree (call it $D$), such that $dx_{s,r}\neq 0$. By the explicit form \eqref{E:biprimitive-ss} of each stage $_sEE_r(B_k)$, we can choose $x_{s,r}$ to be primitive. According to Lemma \ref{L:lift} the element $x_{s,r}$ is in fact represented by a primitive element $x_{0,0}\in {_0EE_0}(B_k)$.
Using the explicit shape of the biprimitive form ${_0EE_0}(B_k)$ shows that the primitive element $x_{0,0}$ can be expressed as a linear combination of indecomposable generators $\xi_i$ of ${_0EE_0}(B_k)$ as in \cite{Bro63}, Page 158. We can assume that $x_{0,0}$ is represented by one such generator $\xi_0$ (still guaranteeing our assumption $dx_{s,r}\neq 0$). By \cite[Theorem 2.7]{Bro63} $\xi_0$ is represented by an iterated $p$-th power $x_0^{p^m}$ of an indecomposable generator $x_0$ of $E_0(B_k)$. Since $B_k\simeq E_0(B_k)$ as algebras, $\xi_0=x_0^{p^m}$ is represented by an iterated $p$-th power $\xi=x^{p^m}$ of some indecomposable element $x\in B_k$. However, if $m\neq 0$, we would have $d\xi=0$ in $(B_k,d)$ and, by standard facts about the spectral sequence of a filtered complex, it would follow that $dx_{s,r}=0$ for all $x_{s,r} \in{}_sEE_r(B_k)$ represented by $\xi$. This gives a contradiction, and hence $x_{s,r}$ is represented by the indecomposable element $x$.
We claim that $dx$ cannot be written as $dw$ for some decomposable element $w$: in fact, if this was the case, then $d(x-w)=0$ would imply that every element in $_sEE_r(B_k)$ represented by $x-w$ has zero differential. On the other hand, since $w\in F_2(B_k)$ (second subset in the filtration of $B_k$, which coincides with the set of decomposable elements), then $x_0=\{x\}=\{x-w\}$ in $E^1_0(B_k)$, and this implies that $x_{s,r}$ would also be represented by $x-w \in F_1(B_k)$, a contradiction since $d x_{s,r}\neq 0$.

Recall (cf. \cite{McCl}) that the homology $H_*(\Omega B\Orb;\ZZ_p)$ is a Hopf algebra as well, and that there is a homology Bockstein spectral sequence $(B^k,d^k)$ with $B^1=H_*(\Omega B\Orb;\ZZ_p)$ and such that each $B^k$ is a Hopf Algebra, which is identified with the dual of $B_k$. In particular, since $dx\in B_k$ is linearly independent from every $dw$, $w$ decomposable, it then follows that there exists an element $\bar{y}\in B^k$ such that $\bar{y}(dx)\neq 0$ and $\bar y(dw)=0$ for every $w$ decomposable. Then:
\begin{itemize}
\item $d\bar{y}(w)=0$ for all decomposable $w$. Hence, $d \bar{y}$ is primitive by Lemma \ref{L:decomposable-primitive}.
\item $d\bar{y}(x)\neq 0$, which implies that $\bar{x}:=d\bar{y}\neq 0$.
\end{itemize}
From this, it follows from Theorem 3 of \cite{McCl} that $\bar{x}$ produces \emph{$\infty$-implications}. That is, there exists a sequence of primitive elements $\bar{x}_1, \bar{x}_2,\ldots$ in homology, with $\bar x_i\in B^{r_i}$ for some $r_i$, and $|\bar{x}_i|=D p^i$.
Moreover, it follows from the proof of Theorem 3 of \cite{McCl} that:
\begin{enumerate}
\item For every $i$, there exists an element $\bar y_i\in B^{r_i}$ such that $d\bar y_i=\bar x_i$.
\item Either $r_{i+1}=r_i$ and $\bar x_{i+1}=\bar x_i^p$ (hence $\bar{x}_{i+1}=d(\bar{y}_i\bar x_i^{p-1})$), or $r_{i+1}=r_i+1$ and $\bar y_{i+1}$ is represented by $\bar y_i\bar x^{p-1}$ (hence $\bar{x}_{i+1}=d[\bar{y}_i\bar x_i^{p-1}]$).
\end{enumerate}
Choose elements $x_i\in B_{r_i}$ such that $x_i(\bar{x}_i)\neq 0$, which implies $dx_i\neq 0$. By Lemma \ref{L:decomposable-primitive} we can choose the $x_i$'s to be indecomposable in $B_{r_i}$. These $x_i$ represent indecomposable elements in $E_0 B_{r_i}$ that lie in $E^1_0 B_{r_i}$. In particular, they represent primitive elements in $E_0 B_{r_i}$ by \cite[Proposition~1.3, (i)]{Bro63}, and hence primitive elements in $_0EE_0(B_{r_i})$ by \cite[Proposition~1.2, (iv)]{Bro63}, which, by Lemma \ref{L:lift}, can be lifted to primitive elements in  $_0EE_0(B_r)$, again called $x_i$. Moreover, since $|x_i|=D p^i$, it follows by dimensional reasons that the algebra
\[
A_b=\bigotimes_i \Zp[x_i]/(x_i^p)\subseteq {}_0EE_0(B_1)
\]
that is, the algebra generated by the $x_i$, is isomorphic as a vector space to a polynomial algebra in one generator of degree $D$.

Since the elements in the sequence $b$, i.e.~the elements $x_i$, do not survive to $_0EE_0(B_{\infty})$, they are linearly independent of the elements in the sequence $a$ in \eqref{E:a}. Once again Lemma \ref{L:growth} can then be applied, contradicting the uniform boundedness of $_0EE_0(B_1)$.

This proves Proposition \ref{P:HBOmOZp} in the second and final case.

\section{Proof of the Main Theorem}\label{sec:proof_main_thm}

Let $\Orb$ be an odd-dimensional Besse orbifold. By Corollary \ref{C:reduce-to-simply-connected}, the universal cover $\tilde{\Orb}$ is a Besse orbifold as well. We first prove that $\tilde{\Orb}$ is in fact a manifold.

Consider the Leray-Serre spectral sequence of the path-fibration $\Omega B\tilde{\Orb}\to PB\tilde{\Orb}\to B\tilde{\Orb}$ with $\ZZ_p$ coefficients. The second page is $E^{r,s}_2= H^r(B \tilde{\Orb};\Z_p) \otimes H^s(\Omega B \tilde{\Orb} ;\Z_p)$. From Proposition \ref{P:HBOmOZp}, we know that $H^*(\Omega B \tilde{\Orb} ; \Z_p) \cong \Z_p[e_{n-1}]$ as a graded vector space. Since the total space of the fibration is contractible, the spectral sequence converges to the cohomology of a point. In particular, the lowest non-zero degree in which $B\Orb$ has nontrivial $\Z_p$ cohomology is $n$ and the dimension of this cohomology group is $1$. Hence, for $r\leq n$ we have $E^{r,s}_2= \Lambda(e_{n}) \otimes \Z_p[e_{n-1}]$ as graded vector spaces, and $e_n\cdot e_{n-1}\neq 0$ in $E_2^{p,q}$. Since $x=e_{n}$ has even degree we find using the Leibniz rule that
\[
		d_{n}(x^k)= k x^{k-1}  d_{n}(x).
\]
It follows inductively that for $k<p$ we have $x^k\neq 0$ and $d_{n}(x^k) \neq 0$. This implies that $H^i(B\tilde{\Orb};\Z_p)=0$ for all $0<i<p(n-1)+1$, $i\neq n$.

First suppose that $p>2$. Because $H^{4i}(B\tilde{\Orb};\ZZ_p)=0$ for $i= 1,\ldots [n/2]$, it follows that the orbifold Pontryagin classes $\mathsf{p}_i(\tilde{\Orb})$ are all zero.

In the case $p=2$, since $H^{<n}(B\tilde{\Orb};\ZZ_p)=0$, all the orbifold Stiefel Whitney classes are trivial with the possible exception of $w_{n}(\tilde{\Orb})$. However, it follows from Wu's formula that the Stiefel Whitney classes are generated over the Steenrod algebra by those in degrees that are a power of two. Since $n>2$ is not a power of two, $w_{n}(\tilde{\Orb})$ is trivial as well.

From Proposition \ref{P:when-orb-is-man}, it follows that $\tilde{\Orb}$ is a Besse manifold. Since $\tilde{\Orb}$ is odd dimensional, it follows from Theorem \ref{BottSam} that $\tilde{\Orb}$ is a (simply connected) homology sphere, hence a homotopy sphere and, by the Poincar\'e Conjecture, a topological sphere. In particular, $\Orb=\tilde{\Orb}/\Gamma$, where $\Gamma=\pi_1^\orb(\Orb)\subset{Iso}(\tilde{\Orb})$.

Furthermore, if $n\neq 3$ and $\Gamma$ contains an involution $g$ acting freely on $\tilde{\Orb}$, then $g$ generates a copy of $\ZZ_2$ acting freely on $\tilde{\Orb}$. By \cite[Theorem 2]{LiSch}, it follows that $\tilde{\Orb}/\langle g \rangle$ (and hence $\tilde{\Orb}$) has constant sectional curvature.

\section{A conjecture}\label{conj}

Recall the construction of weighted projective spaces $\mathbb{CP}_a^n$, the quotients $\Ss^{2n+1}/\Ss^1$ by an almost free action, with $a=(a_0,\ldots,a_n)$ and with $a_i\in \ZZ \backslash\{0\}$ from Section \ref{sub:basics_on_orbifolds}. These constitute simply connected Besse orbifolds in even dimensions. Consequently, our main result cannot be generalized to even dimensions.

However, note that their cohomology was computed in \cite{Hol} to equal
\begin{align*}
H^*_{orb}(\mathbb{CP}_a^n;\Z)=\Z[u] / (a_0\cdot \ldots \cdot a_n \cdot u^{n+1})
\end{align*}
where $u$ generates second cohomology.

As a service to the reader let us quickly recall the argument. We identify $H^*_{orb}(\mathbb{CP}_a^n;\Z)\cong H^*_{\Ss^1} (\Ss^{2n+1};\Z)$ and consider the long exact sequence of the pair $(\mathbb{C}^{n+1}, \Ss^{2n+1})$.
\begin{align*}
\ldots \to H_{\Ss^1}^i(\mathbb{C},\Ss^{2n+1};\Z)\xrightarrow{\alpha} H^i_{\Ss^1}(\mathbb{C}^{n+1};\Z)\to H_{\Ss^1}^i(\Ss^{2n+1};\Z)\to\ldots
\end{align*}
Considering $(\mathbb{C}^{n+1}, \Ss^{2n+1})$ as a pair of trivial vector bundles over the point the Thom isomorphism identifies $H^i_{\Ss^1}(\mathbb{C}^{n+1},\Ss^{2n+1};\Z)\cong H^{i-2(n+1)}_{\Ss^1}(\mathbb{C}^{n+1};\Z)$, and $\alpha$ is multiplication with the equivariant Euler class $e_{\Ss^1}(\mathbb{C}^{n+1})=a_0\cdot \ldots \cdot a_n\cdot u^{n+1}$ (computed using the splitting principle). Consequently, the morphism $\alpha$ is injective, and the long exact sequence above splits to yield the result. Alternatively, the result can be obtained by an application of the Gysin sequence to the fibration
\begin{align*}
\Ss^1\to \Ss^{2n+1}\times E\Ss^1\to B \mathbb{CP}_a^n.
\end{align*}

The same reasoning shows that the integral cohomology ring of an $4n$-dimensional weighted quaternionic projective space is of the form $\Z[u] / (p \cdot u^{n+1})$ with $u$ of degree $4$ and some integer $p$, which, by  \cite[Theorem 7.7, p.~568]{Qui71}, is $1$ if and only if the corresponding action of $\mathrm{SU} (2)$ on $\Ss^{4n-1}$ is free.

\bigskip

Recall further that due to \ref{BottSam} the best known result in the manifold case is a cohomological classification which, in particular, implies that the integral cohomology algebra is generated by one element only.

Taken together, these observations motivate the following conjecture.
\begin{cnj}
The integral cohomology algebra of a simply-connected even-dimensioeven-dimensionalnal Besse orbifold $\mathcal{O}$ of dimension $d$ is generated by one element $u$ of even degree $k$ such that $d=k\cdot n$ and $k\in \{2,4,8\}$, $n\in \mathbb{N}$, such that
\begin{align*}
H^*_{orb}(\mathcal{O};\Z)\cong \Z[u]/ (l \cdot u^{n+1}).
\end{align*}
\end{cnj}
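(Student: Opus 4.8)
The plan is to mirror the strategy behind the odd-dimensional Theorem \ref{T:main-theorem}: compute the cohomology of the orbifold loop space $\Omega B\mathcal{O}$ with all field coefficients and then transport this information back to $H^*_{orb}(\mathcal{O})$ through the path-loop fibration. As in Corollary \ref{C:reduce-to-simply-connected} and Theorem \ref{T:uniformly-bounded}, every Besse orbifold has $\Omega B\mathcal{O}$ with uniformly bounded Betti numbers, and simple connectivity is part of the hypothesis. First I would invoke the even-dimensional half of Proposition \ref{ratellprop}: rationally $H^*(\mathcal{O};\QQ)$ is singly generated, so $H^*_{orb}(\mathcal{O};\QQ)\cong \QQ[u]/(u^{n+1})$ for a generator $u$ of even degree $k$ with $d=kn$, while $H^*(\Omega\mathcal{O};\QQ)\cong \wedge(y')\otimes \QQ[x']$. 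This fixes the rational shape of the answer and isolates the two genuinely hard points: (a) ruling out $p$-torsion obstructions so that the integral ring is the truncated polynomial algebra $\ZZ[u]/(l\,u^{n+1})$, and (b) forcing $k\in\{2,4,8\}$.

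For point (a) I would develop an even-dimensional analogue of Proposition \ref{P:HBOmOZp} by running Browder's Bockstein machinery on $H^*(\Omega B\mathcal{O};\ZZ_p)$. The biprimitive decomposition of Theorem \ref{T:biprimitive-hopf} together with the growth Lemma \ref{L:growth} should again show that any surviving family of \emph{unexpected} odd-degree primitives would make the Betti numbers grow faster than the uniform bound of Theorem \ref{T:uniformly-bounded} permits. The essential difference from the odd case is the unavoidable exterior generator $y'$: here $B_\infty\simeq \wedge(y')\otimes \ZZ_p[x']$ already carries one odd class, so the contradiction must be arranged against the odd classes \emph{beyond} this single expected one, rather than against all odd classes as in Section \ref{S:Zp-cohomology}. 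Having pinned down $H^*(\Omega B\mathcal{O};\ZZ_p)$, I would feed it into the Leray--Serre spectral sequence of $\Omega B\mathcal{O}\to PB\mathcal{O}\to B\mathcal{O}$ exactly as in Section \ref{sec:proof_main_thm}, reading off that $H^*_{orb}(\mathcal{O};\ZZ_p)$ is a truncated polynomial algebra on one even generator for every $p$, and then assemble the integral statement by a universal-coefficient and Bockstein comparison across all primes, the single remaining datum being the integer $l$.

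The main obstacle is point (b), and this is where genuinely new input is needed. The condition $H^*_{orb}(\mathcal{O};\ZZ)\cong \ZZ[u]/(l\,u^{n+1})$ with $u^2\neq 0$, i.e. $n\geq 2$, is precisely the Hopf-invariant-one situation: nontriviality of the squaring $u\mapsto u^2$ forces a primary Steenrod operation $Sq^k$ at the prime $2$ to act nontrivially, and by Adams' theorem (via secondary operations and the Adem relations) this is possible only for $k\in\{2,4,8\}$; the case $n=1$ is the degenerate, unrestricted one realised by round even spheres. Translating this classical manifold statement into the classifying-space setting is the crux: one must verify that the Steenrod-module structure carried by $H^*_{orb}(\mathcal{O};\ZZ_2)=H^*(B\mathcal{O};\ZZ_2)$ is the genuine one, so that Adams' obstruction applies verbatim rather than to an abstract truncated algebra. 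For $k=8$ the height-$(n+1)$ truncation should moreover force $n\leq 2$, reflecting the nonexistence of $\operatorname{CaP}^n$ for $n\geq 3$. Combining the Adams restriction with the recognition Proposition \ref{P:when-orb-is-man} and the manifold classification of Theorem \ref{BottSam} should then complete the identification of the admissible cohomology rings.
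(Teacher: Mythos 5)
The statement you are trying to prove is presented in the paper as a \emph{conjecture}: the authors offer no proof, only supporting evidence (the computation $H^*_{orb}(\mathbb{CP}^n_a;\ZZ)\cong \ZZ[u]/(a_0\cdots a_n\, u^{n+1})$ for weighted projective spaces via the Gysin/Thom argument, and the rational statement from Proposition \ref{ratellprop}). So any comparison is against motivation, not a proof, and your proposal should be judged on whether it could close the gap. It cannot in its present form, because your step (a) is not merely incomplete but false for the known examples. If $p$ divides $l$, then $H^*_{orb}(\mathcal{O};\ZZ_p)$ is \emph{infinite-dimensional} (by universal coefficients applied to $\ZZ[u]/(l\,u^{n+1})$, which has $\ZZ_l$ in every degree $mk$ with $m>n$), not a truncated polynomial algebra; equivalently, by Proposition \ref{P:when-orb-is-man} and Quillen's theorem, finite-dimensionality of all the mod-$p$ orbifold cohomologies would force $\mathcal{O}$ to be a manifold and $|l|=1$, which contradicts the weighted projective spaces with nontrivial weights that the conjecture is designed to accommodate. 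Concretely, the Leray--Serre transfer from $\Omega B\mathcal{O}$ back to $B\mathcal{O}$ does not work ``exactly as in Section \ref{sec:proof_main_thm}'': in the even case the extra exterior generator $y'$ transgresses (integrally) to $l\cdot u^{n+1}$ rather than to a generator, so the fiber cohomology no longer determines the base cohomology, and the lacunary principle that kills all Bockstein differentials in the odd case is unavailable precisely because $B_\infty$ now contains an odd-degree class. This is the actual reason the statement remains a conjecture.

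Your point (b) is a reasonable heuristic for why $k\in\{2,4,8\}$ should hold --- Adams' decomposition of $Sq^k$ by secondary operations applies to any space, including $B\mathcal{O}$, provided one can establish the gap $H^i_{orb}(\mathcal{O};\ZZ_2)=0$ for $k<i<2k$ --- but that gap hypothesis is exactly what may fail when $l$ is even, so (b) inherits the unresolved difficulty of (a). If you want to extract something provable along your lines, the paper's own remark already records the correct rational shadow: Proposition \ref{ratellprop} gives that $H^*_{orb}(\mathcal{O};\QQ)$ is a truncated polynomial algebra on one even generator. The genuinely open content of the conjecture is the integral ring structure and the restriction on $k$, and neither follows from the Bockstein/loop-space machinery of Sections \ref{S:Zp-cohomology} and \ref{sec:proof_main_thm} without a new idea for handling the odd-degree generator of $H^*(\Omega B\mathcal{O})$.
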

Note that by \cite[Theorem 7.7, p.~568]{Qui71} in this case $\Orb$ has the structure of a manifold if and only if $|l|=1$.

\begin{rem}
The reader may observe that we can support this conjecture further: Indeed, our Morse theoretic arguments (with rational coefficients) together with the corresponding arguments from rational homotopy theory presented in Proposition \ref{ratellprop} show that also in the case when $\dim \Orb$ is even its rational (orbifold) cohomology algebra is a truncated polynomial algebra generated by one element of even degree.
\end{rem}

\begin{appendix}
\section{Manipulations with Hopf algebras}\label{sec:mani_hopf_algbras}

This appendix contains a few elementary lemmas about differential Hopf algebras, needed in Section \ref{S:Zp-cohomology} but too technical for being included there. We refer the reader to Section \ref{SS:Hopf} for relevant definitions and notation.

Recall that, given a differential Hopf algebra $(A,d)$, then its homology $H_*(A,d)$ has again the structure of a Hopf algebra.

\begin{lem}\label{L:primitive}
Let $(A,d)$ be a differential Hopf algebra, and let If $y\in A$ be primitive of odd degree with $dy=:x$. Then $y'=[yx^{p-1}]\in H_*(A,d)$ is primitive as well.
\end{lem}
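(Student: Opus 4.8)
The plan is to compute the coproduct of $yx^{p-1}$ explicitly and to show that, beyond the two tautological primitive terms, everything is a boundary in $A\otimes A$, so that the induced class is primitive in $H_*(A,d)$. First I would record the structural facts. Since $\Psi\colon A\to A\otimes A$ is a morphism of differential coalgebras it commutes with $d$, so applying it to the primitive element $y$ gives $\Psi(x)=\Psi(dy)=d\,\Psi(y)=x\otimes 1+1\otimes x$; thus $x=dy$ is primitive, and it has even degree because $y$ is odd and $d$ raises degree by one. I would also note that $d(yx^{p-1})=(dy)x^{p-1}+(-1)^{|y|}y\,d(x^{p-1})=x^{p}$, using $dx=d^2y=0$; in the biprimitive situation in which the lemma is applied (Lemma \ref{L:noLj}, where $x$ generates a factor $\ZZ_p[x]/(x^{p})$) one has $x^{p}=0$, so $yx^{p-1}$ is a genuine cycle and $y'=[yx^{p-1}]$ is well defined. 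Because $\Psi$ is a chain map it descends to the coproduct on $H_*(A,d)$, so it suffices to prove $\Psi(yx^{p-1})\equiv yx^{p-1}\otimes 1+1\otimes yx^{p-1}$ modulo $\operatorname{im} d$ in $A\otimes A$.

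Next I would expand the coproduct. As $\Psi$ is multiplicative and $y,x$ are primitive with $x$ even, one has $\Psi(x^{p-1})=\sum_{k=0}^{p-1}\binom{p-1}{k}x^{k}\otimes x^{p-1-k}$ (no signs, $x$ being even), and multiplying by $\Psi(y)=y\otimes1+1\otimes y$ the graded sign rule yields
\[
\Psi(yx^{p-1})=\sum_{k=0}^{p-1}\binom{p-1}{k}\bigl(yx^{k}\otimes x^{p-1-k}+x^{k}\otimes yx^{p-1-k}\bigr).
\]
The $k=p-1$ summand of the first family and the $k=0$ summand of the second contribute exactly $yx^{p-1}\otimes1+1\otimes yx^{p-1}$; call the remaining cross terms $R$. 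To see that $R$ is exact I would use the boundaries $d(yx^{a}\otimes yx^{b})=x^{a+1}\otimes yx^{b}-yx^{a}\otimes x^{b+1}$ (again from $d(yx^{a})=x^{a+1}$ and the Koszul sign), and verify that
\[
R=d\Bigl(-\sum_{j=0}^{p-2}\binom{p-1}{j}\,yx^{j}\otimes yx^{p-2-j}\Bigr).
\]
Matching coefficients of the two types of monomials reduces the whole computation to the single congruence $\binom{p-1}{j+1}\equiv-\binom{p-1}{j}\pmod p$, which is Pascal's identity $\binom{p-1}{j}+\binom{p-1}{j+1}=\binom{p}{j+1}$ together with $p\mid\binom{p}{i}$ for $0<i<p$.

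Granting this, $\Psi_*(y')=y'\otimes1+1\otimes y'$ in $H_*(A,d)$, so $y'$ is primitive. The only genuine obstacle is the bookkeeping of the cross terms $R$, and it is resolved entirely by the binomial congruence above; once the explicit bounding element is written down the verification is a routine coefficient comparison, so I expect no real difficulty. I note that a slightly more conceptual variant would restrict to the sub-differential-Hopf-algebra of $A$ generated by $y$ (which, when $x^{p}=0$, is $\Lambda(y)\otimes\ZZ_p[x]/(x^{p})$), compute its homology to be $\Lambda([yx^{p-1}])$ with $[yx^{p-1}]$ primitive, and then push forward along the Hopf-algebra inclusion, which preserves primitives; this avoids the explicit boundary but rests on the same vanishing $x^{p}=0$.
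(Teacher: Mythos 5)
Your proposal is correct and follows essentially the same route as the paper: expand $\Psi(yx^{p-1})=\Psi(y)\Psi(x)^{p-1}$, peel off the two tautological terms, and kill the cross terms using $x^k=d(yx^{k-1})$ together with Pascal's identity and $p\mid\binom{p}{k}$. The only cosmetic difference is that you exhibit the bounding element $-\sum_j\binom{p-1}{j}yx^j\otimes yx^{p-2-j}$ explicitly, whereas the paper transfers $d$ across the tensor factor term by term; your side remarks on well-definedness of $[yx^{p-1}]$ and the alternative via the subalgebra $\Lambda(y)\otimes\ZZ_p[x]/(x^p)$ are sound but not needed.
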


\begin{proof}
$\Psi(y')=\Psi[yx^{p-1}]=[\Psi(yx^{p-1})]$, so it is enough to prove that $\Psi(yx^{p-1})\sim 1\otimes yx^{p-1}+yx^{p-1}\otimes 1$, where $\sim$ denotes equivalence up to coboundaries. Since $y$ is primitive, so is $x$, and we get:
\begin{align*}
\Psi(yx^{p-1})=& \Psi(y)\Psi(x)^{p-1}\\
=&(1\otimes y+y\otimes 1)(1\otimes x+x\otimes 1)^{p-1}\\
=&(1\otimes y+y\otimes 1)\left(\sum_{k=0}^{p-1}{p-1\choose k}x^k\otimes x^{p-1-k}\right)\\
=&\sum_{k=0}^{p-1}{p-1\choose k}x^k\otimes yx^{p-1-k}+\sum_{k=0}^{p-1}{p-1\choose k}yx^k\otimes x^{p-1-k}.
\end{align*}
Separating the term $(1\otimes yx^{p-1}+yx^{p-1}\otimes 1)$, this becomes
\[
(1\otimes yx^{p-1}+yx^{p-1}\otimes 1)+\sum_{k=1}^{p-1}{p-1\choose k}x^k\otimes yx^{p-1-k}+\sum_{k=0}^{p-2}{p-1\choose k}yx^k\otimes x^{p-1-k}
\]
\[
= (1\otimes yx^{p-1}+yx^{p-1}\otimes 1)+\sum_{k=1}^{p-1}{p-1\choose k}d(yx^{k-1})\otimes yx^{p-1-k}+\sum_{k=0}^{p-2}{p-1\choose k}yx^k\otimes x^{p-1-k}
\]
\[
\sim (1\otimes yx^{p-1}+yx^{p-1}\otimes 1)+\sum_{k=1}^{p-1}{p-1\choose k}yx^{k-1}\otimes d(yx^{p-1-k})+\sum_{k=0}^{p-2}{p-1\choose k}yx^k\otimes x^{p-1-k}
\]
\[
= (1\otimes yx^{p-1}+yx^{p-1}\otimes 1)+\sum_{k=1}^{p-1}{p-1\choose k}yx^{k-1}\otimes x^{p-k}+\sum_{k=0}^{p-2}{p-1\choose k}yx^k\otimes x^{p-1-k}
\]
\[
= 1\otimes yx^{p-1}+yx^{p-1}\otimes 1+\sum_{k=1}^{p-1}\left({p-1\choose k}+{p-1\choose k-1}\right)yx^{k-1}\otimes x^{p-k}
\]
\[
= 1\otimes yx^{p-1}+yx^{p-1}\otimes 1+\sum_{k=1}^{p-1}{p\choose k}yx^{k-1}\otimes x^{p-k}
\]
Since in characteristic $p$ one has ${p\choose k}=0$ for all $k=1,\ldots p-1$, the last term is just
\[
1\otimes yx^{p-1}+yx^{p-1}\otimes 1.
\]
\end{proof}

\begin{lem}\label{L:decomposable-primitive}
Let $A$ be a Hopf algebra, and $\bar{y}\in A^*$ (the dual of $A$). Then $\bar{y}(w)=0$ holds for every $w$ decomposable, if and only if $\bar{y}$ is primitive.
\end{lem}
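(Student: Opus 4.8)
The plan is to translate primitivity of $\bar y \in A^*$ back into a statement about the pairing of $\bar y$ with products in $A$. Recall from the construction of the dual Hopf algebra that the coproduct $\Psi$ on $A^*$ is dual to the multiplication on $A$, i.e.\ $\Psi(\bar y)(u\otimes v)=\bar y(u\cdot v)$ for all $u,v\in A$, and that the unit $1\in A^*_0$ is the counit $\epsilon$ of $A$, the functional with $\epsilon(1)=1$ and $\epsilon|_{\bar A}=0$. By definition $\bar y$ is primitive precisely when $\Psi(\bar y)=1\otimes\bar y+\bar y\otimes 1$ inside $A^*\otimes A^*\cong (A\otimes A)^*$. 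Since primitive elements lie in $\overline{A^*}$, we may assume $\bar y$ has positive degree, so that in particular $\bar y(1)=0$.

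First I would test the candidate equality $\Psi(\bar y)=1\otimes\bar y+\bar y\otimes 1$ by evaluating both sides on the homogeneous basis elements $u\otimes v$ of $A\otimes A$, using the splitting $A=\K\cdot 1\oplus\bar A$. When both $u,v\in\bar A$, the left-hand side equals $\bar y(uv)$ while the right-hand side vanishes (each summand carries an $\epsilon$ evaluated on a positive-degree element); hence the equality on such pairs is exactly the condition $\bar y(uv)=0$ for all $u,v\in\bar A$, that is, that $\bar y$ annihilates every decomposable element. When at least one of $u,v$ equals $1$, both sides agree automatically: using $\epsilon(1)=1$, $\bar y(1)=0$, and $\epsilon|_{\bar A}=0$, the only surviving term on each side is $\bar y(v)$ (for $u=1$), respectively $\bar y(u)$ (for $v=1$), and both sides vanish when $u=v=1$. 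Consequently the primitivity condition is equivalent to the vanishing of $\bar y$ on all decomposables, which is precisely the assertion, and the two implications come out simultaneously from this single basis computation.

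The argument is essentially bookkeeping of which components of $\Psi(\bar y)$ are constrained, so I do not expect a genuine obstacle. The only point requiring care is the Koszul sign appearing in the identification $A^*\otimes A^*\cong (A\otimes A)^*$; however, these signs occur only in the mixed terms $(1\otimes\bar y)(u\otimes v)$ and $(\bar y\otimes 1)(u\otimes v)$, and precisely those terms vanish unless the $\epsilon$-slot is evaluated on the degree-zero element $1$, in which case the accompanying sign is $+1$. Thus the signs never interfere with the computation, and the equivalence follows.
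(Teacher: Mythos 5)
Your proof is correct and follows essentially the same route as the paper: both arguments reduce primitivity of $\bar y$ to the vanishing of $\Psi(\bar y)-1\otimes\bar y-\bar y\otimes 1$ on $\bar A\otimes\bar A$, where by duality this functional evaluates to $\bar y(uv)$, i.e.\ to the pairing with decomposables. The only cosmetic difference is that the paper invokes the coalgebra axiom to see a priori that the difference lies in $(\bar A\otimes\bar A)^*$, whereas you verify the mixed terms involving $1$ directly; this changes nothing of substance.
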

\begin{proof}
Let $\Psi(\bar{y})=1\otimes \bar{y}+\bar{y}\otimes1+\sum_i y_i'\otimes y_i''$. Then $Y=\sum_i y_i'\otimes y_i''=0$ in $(\bar{A}\otimes \bar{A})^*$ (where $\bar{A}$ is the subspace of elements of positive degree), if and only if $Y(x'\otimes x'')=0$ for all $x'\otimes x''\in \bar{A}\otimes \bar{A}$. But
$$Y(x'\otimes x'')=\Psi(\bar{y})(x'\otimes x'')=\bar{y}(x' x'')=0$$ since $x'x''$ is decomposable, hence the result.
\end{proof}

The next Lemma, applies to the specific case of Browder's spectral sequence $_sEE_r(B_k)$ of biprimitive Hopf algebras (see Section \ref{S:Zp-cohomology}). Notice that, given two triplets of integers $(s,r,k), (s',r',k')$, with $(s,r,k)<(s',r',k')$ in the lexicographic order, it follows that the page ${}_{s'}EE_{r'}(B_{k'})$ occurs after the page $_sEE_r(B_k)$. In this situation, we say that a subalgebra $E'\subseteq {}_{s'}EE_{r'}(B_{k'})$ \emph{lifts} to a subalgebra $E\subset{}_sEE_r(B_k)$, if there is an algebra isomorphism $\phi:E\to E'$ such that every $x\in E$ represents $\phi(x)$.
\begin{lem}\label{L:lift}
Given two triplets of integers $(s,r,k)<(s',r',k')$ in the lexicographic order, the algebra $_{s'}EE_{r'}(B_{k'})$ lifts to a subalgebra $E\subset{}_sEE_r(B_k)$, such that primitive elements of even degree lift to primitive elements.
\end{lem}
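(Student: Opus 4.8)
The plan is to reduce the statement to a single homology step of the iterated spectral sequence and to carry out that step using the biprimitive normal form of Theorem~\ref{T:biprimitive-hopf}. First I would record that the lifting relation is transitive: if a subalgebra $E''$ lifts to $E'$ via $\phi'$ and $E'$ lifts to $E$ via $\phi$, then $\phi\circ\phi'$ lifts $E''$ to $E$, since a representative of a representative is again a representative. Moreover the supplementary condition ``even-degree primitives lift to primitives'' is inherited under composition, because an even primitive of the top page pulls back under $\phi'$ to an even primitive of the intermediate page, which in turn pulls back under $\phi$ to a primitive of the bottom page. Now, in the order on the triples $(s,r,k)$ each page is obtained from its predecessor either by passing to $d_s$- (resp.\ $d_r$-) homology, or by the convergence identification of the inner (resp.\ outer) spectral sequence; the latter is, in every fixed total degree, an isomorphism reached after finitely many steps, because each graded piece of $B_1=H^*(\Omega B\Orb;\Zp)$ is finite-dimensional by Theorem~\ref{T:uniformly-bounded}. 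Hence it suffices to lift across one homology step $A'=H_*(A,d)$, where $A:={}_sEE_r(B_k)$ and $A'$ is its successor (the $d_r$-case being identical).

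Next I would invoke the structure theory. Both $A$ and $A'$ are biprimitive differential Hopf algebras over $\Zp$, so by Theorem~\ref{T:biprimitive-hopf} one has $A\cong\bigotimes_i K_i\otimes\bigotimes_j L_j\otimes Q$ with $K_i=\Lambda(y_i)\otimes\Zp[z_i]/(z_i^p)$ and $d y_i=z_i$, with $L_j=\Lambda(y'_j)\otimes\Zp[z'_j]/(z'^p_j)$ and $d z'_j=y'_j$, and with $dQ=0$. By the K\"unneth theorem for differential Hopf algebras, $H_*(A,d)\cong\bigotimes_i\Lambda(\bar w_i)\otimes\bigotimes_j\Lambda(\bar w'_j)\otimes Q$ as Hopf algebras, where the surviving generators are the classes $\bar w_i=[y_iz_i^{p-1}]$ and $\bar w'_j=[y'_jz'^{p-1}_j]$; these are primitive by Lemma~\ref{L:primitive} (and the entirely analogous computation applied to $L_j$), and a degree count shows that both are of \emph{odd} degree.

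For the lift itself I would take as generators of $E$ the cycles $y_iz_i^{p-1}$ and $y'_jz'^{p-1}_j$ together with the subalgebra $Q\subset A$; each represents the corresponding generator of $H_*(A,d)$. As these live in distinct tensor factors of $A$ and satisfy $(y_iz_i^{p-1})^2=(y'_jz'^{p-1}_j)^2=0$, the subalgebra $E$ they generate is, as a graded algebra, isomorphic to $H_*(A,d)$ through the map $\phi$ sending each generator to its class; the truncation heights match automatically because $\phi|_Q$ is the inclusion $Q\hookrightarrow H_*(A,d)$, reusing the argument in the proof of Lemma~\ref{L:growth} that a biprimitive algebra is determined by a homogeneous basis of its primitives. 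This produces the required algebra lift.

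The main point, and the one I would treat most carefully, is the clause on even-degree primitives. Since $H_*(A,d)$ is a tensor product of Hopf algebras, its primitives decompose as $\mathrm{span}\{\bar w_i\}\oplus\mathrm{span}\{\bar w'_j\}\oplus P(Q)$. The crucial observation is that the contributions of the $K_i$ and $L_j$ are all of odd degree, so every even-degree primitive of $H_*(A,d)$ already lies in $P(Q)$; such a class is represented by an even-degree primitive of $Q$, which---sitting in a single tensor factor with $d=0$---is a genuine primitive \emph{cycle} of $A$. Thus $\phi^{-1}$ carries even primitives to primitives, completing the single step. I expect the only real obstacle to be this last bookkeeping: verifying that $K_i$ and $L_j$ never contribute even primitive classes (which is what forces every even primitive to come from the $d=0$ part $Q$, where it lifts honestly), together with the careful propagation of this property along the finite chain of homology steps described in the first paragraph.
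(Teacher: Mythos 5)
Your proof is correct and follows essentially the same route as the paper's: reduce by transitivity to a single homology step (using that the convergence identifications stabilize in each fixed degree after finitely many pages), then lift explicitly via the biprimitive normal form of Theorem~\ref{T:biprimitive-hopf}, with the key observation that the new homology generators $[y_iz_i^{p-1}]$ and $[y'_jz'^{p-1}_j]$ are all of odd degree, so every even-degree primitive comes from the $d=0$ factor $Q$ and hence lifts to an honest primitive. The only difference is organizational: the paper makes your degreewise-stabilization point precise by lifting the subalgebras generated by primitives of degree $\leq m$ (which are concentrated in finitely many degrees) and taking a direct limit over $m$ --- exactly the bookkeeping you flag at the end as the delicate point.
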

\begin{proof}
Let $E_{s',r',k'}(m)\subseteq{}_{s'}EE_{r'}(B_{k'})$ be the subalgebra generated by primitive elements of degree $\leq m$. We will show that for any $m$, there are lifts of $E_{s',r',k'}(m)$ to subalgebras $E_{s,r,k}(m)$ of $_sEE_r(B_k)$, such that $E_{s,r,k}(m)\subseteq E_{s,r,k}(m+1)$ and so that the lifts are compatible with the inclusions.

We will prove this by induction:
\begin{itemize}
\item If $s'>0$, then $_{s'-1}EE_{r'}(B_k)$ is isomorphic to
\[
\bigotimes_{i=1}^I\left(\Lambda(y_i)\otimes \Zp[x_i]/(x_i^p)\right)\otimes \bigotimes_{j=1}^J\left(\Lambda(u_j)\otimes \Zp[v_j]/(v_j^p)\right)\otimes Q
\]
and $_{s'}EE_{r'}(B_k)$ is isomorphic to
\[
\bigotimes_{i=1}^I\Lambda ([y_ix_i^{p-1}])\otimes \bigotimes_{j=1}^J\Lambda([u_jv_j^{p-1}])\otimes Q.
\]
By the explicit shape of these algebras, it is clear how to lift $_{s'}EE_{r'}(B_{k'})$ to $_{s'-1}EE_{r'}(B_k)$, and in particular any $E_{s',r',k'}(m)$. Notice that primitive elements of even degree must live in the $Q$ factor, hence their lift is again primitive.
\item If $s'=0$ and $r'>0$ then, since $E_{0,r',k'}(m)\subseteq{} _{0}EE_{r'}(B_{k'})$ is nonzero only in degrees $\leq d$ for some $d$, then let $s''$ such that $_{s''}EE_{r'-1}(B_{k'})$ is isomorphic to $_0EE_{r'}(B_{k'})$ in degrees $\leq d$. Then, clearly, $E_{0,r',k'}(m)$ lifts to a subalgebra of $_{s''}EE_{r'-1}(B_{k'})$, and, by iterating the point before, it lifts to $_{0}EE_{r'-1}(B_{k'})$.
\item If $s'=0$ and $r'=0$ then, again since $E_{0,0,k'}(m)\subseteq{} _{0}EE_{0}(B_{k'})$ is nonzero only in degrees $\leq d$ for some $d$, then let $r''$ such that $_{0}EE_{r''}(B_{k'-1})$ is isomorphic to $_0EE_{0}(B_{k'})$ in degrees $\leq d$. As before, $E_{0,0,k'}(m)$ lifts to a subalgebra of $_{0}EE_{r''}(B_{k'-1})$ and, by iterating the point before, it lifts to $_{0}EE_{0}(B_{k'-1})$.
\end{itemize}
Iterating these steps, one can lift $E_{s,r,k}(m)$ to any earlier page. Taking the direct limit as $m\to \infty$, one gets the result.
\end{proof}

\section{Wadsley's Theorem for orbifolds}\label{APP:Wadsley}

\subsection{Smooth actions on orbifolds}\label{sub:smooth_orbifold}

In order to prove Proposition \ref{prp:common_period} on the existence of a minimal common period for the geodesics of a Besse orbifold we first need to explain some properties of smooth actions on orbifolds. We assume that the reader is familiar with the notion of a smooth orbifold (see e.g. \cite{MR2973378}). A map between orbifolds is called \emph{smooth} if it locally lifts to smooth maps between manifolds charts \cite[Def.~2.6]{GKRW}. A smooth action of a Lie group $G$ on an orbifold $\Orb$ is a continuous action of $G$ on $\Orb$ that induces a smooth map $G\times \Orb \To \Orb$. If $G$ acts smoothly on $\Orb$ and $x \in \Orb$, then the orbit map $i_x:G/G_x \To \Orb$, $[g] \mapsto gx$, is smooth, a homeomorphism onto its image and all points on $Gx \subset \Orb$ have the same local group \cite[Lem.~2.11]{GKRW}. The latter moreover shows that $G/G_x$ embeds into $\Orb$ as a \emph{full suborbifold} \cite{MR2973378} in the sense that the images of all lifts of $i: G/G_x \To \Orb$ to good manifold charts are invariant, and hence point-wise fixed, under the actions of local groups (see proof of \cite[Lem.~2.11]{GKRW}). In particular, if $\Gamma_x$ is the local group of $\Orb^n$ at $x$, then the tangent space of $\Orb$ at $x$ splits as $\R^{k}/\Gamma_x \times T_x Gx$, where $k=n-\dim(G/G_x)$. This splitting is preserved under the action of $G$ and so the normal bundle $\nu(Gx)$ of $Gx$ in $\Orb$ is a $\R^k/\Gamma_x$-bundle over $G/G_x$.

\subsection{Proof of Wadsley's theorem}\label{sub:wadsley_theorem}

The geodesic flow induces a smooth $\R$-action on the unit sphere bundle $T^1 \Orb$ of a Besse orbifold $\Orb$ which is itself an orbifold. The orbits of this action are geodesics with respect to the natural (Sasaki) metric on $T^1 \Orb$, they project to the geodesics on $\Orb$ with the same arc-length parametrization and the same period, and all the geodesics on $\Orb$ can be obtained in this way (cf. \cite[Ch.~1.K]{Besse}). In the manifold case a theorem by Wadsley says that an $\R$-action with these properties factors through an $\Ss^1$-action \cite{MR0400257}, see also \cite[Appendix A]{Besse}. We are going to argue that the same statement and its conclusion holds in the present setting of orbifolds. In the manifold case the proof involves a local analysis of Poincar\'e return maps. In order to have the same tool available in the orbifold case we first prove the following statement.

\begin{lem}\label{lem:local_development} Let $\varphi: \R \times \Orb^n \To \Orb^n$ be a smooth action on an orbifold $\Orb$ and suppose that $L$ is a periodic orbit. Then there exists a small neighborhood $W$ of $L$ in $\Orb$ which is covered by a manifold $p: M \To W$ such that the restriction of $p$ to the preimage of $L$ in $M$ is a homeomorphism.
\end{lem}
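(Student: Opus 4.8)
The plan is to realize a tubular neighborhood of $L$ as an explicit quotient of $\R\times\R^{n-1}$ and then to ``unwind'' only the transverse orbifold singularity while leaving the circle $L$ untouched. First I would set up the local model. Since $L$ is a periodic orbit of period $T$, the orbit map identifies $L$ with $G/G_x=\R/(T\Z)\cong \Ss^1$. By the structural facts recalled in Section \ref{sub:smooth_orbifold}, every point of $L$ carries the same local group $\Gamma:=\Gamma_x$, the orbit $L$ is a full $1$-dimensional suborbifold, and its normal bundle $\nu(L)$ is an $\R^{n-1}/\Gamma$-bundle over $\Ss^1$. By the orbifold tubular neighborhood theorem (via the normal exponential map) there is an open neighborhood $W$ of $L$ that is orbifold-diffeomorphic to an open neighborhood of the zero section in $\nu(L)$, so it suffices to develop this normal bundle.

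The heart of the argument is to develop $\nu(L)$ around the loop and read off the resulting group action. Pulling the bundle back along the universal cover $\R\to\Ss^1=\R/(T\Z)$ of the base makes it trivial over the contractible base $\R$, giving the trivial $\R^{n-1}/\Gamma$-bundle, which is developed by $\tilde M:=\R\times\R^{n-1}$ with $\Gamma$ acting fiberwise on the second factor. The generator of the deck group of $\R\to\Ss^1$ lifts to a bundle automorphism which on each fiber is an orbifold-diffeomorphism of $\R^{n-1}/\Gamma$ fixing the cone point; hence it lifts to a diffeomorphism $A\in\mathrm{Diff}(\R^{n-1})$ with $A(0)=0$ and $A\Gamma A^{-1}=\Gamma$ (the monodromy, i.e.\ the $\Gamma$-equivariant Poincar\'e return map of the orbifold structure along $L$). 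Setting $\sigma(t,v)=(t+T,Av)$, the elements $\sigma$ and $\Gamma$ generate a group $\tilde\Gamma=\langle\sigma,\Gamma\rangle$ satisfying $\sigma\gamma\sigma^{-1}=A\gamma A^{-1}$, i.e.\ $\tilde\Gamma\cong \Z\ltimes\Gamma$ with $\Gamma$ normal and $\langle\sigma\rangle\cong\Z$ a complement. This $\tilde\Gamma$ acts properly discontinuously on $\tilde M$, one has $\nu(L)=\tilde M/\tilde\Gamma$, and the zero section is the image of $\R\times\{0\}$.

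With the model in hand, the cover is produced by quotienting only by the circle direction. Take $H:=\langle\sigma\rangle\le\tilde\Gamma$. Since $\sigma^{m}(t,v)=(t+mT,A^{m}v)$ has no fixed point for $m\neq 0$, the group $H$ acts freely and properly discontinuously on $\tilde M$; hence $M:=\tilde W/H$, where $\tilde W\subseteq\tilde M$ is the $\tilde\Gamma$-invariant preimage of $W$, is a manifold (an open piece of the mapping torus of $A$), and the natural projection $p:M=\tilde W/H\to \tilde W/\tilde\Gamma=W$ is an orbifold covering, the metric on $M$ being pulled back from $W$. For the final assertion, observe that $\R\times\{0\}\subseteq\tilde W$ is $H$-invariant with $\sigma$ acting on it by translation by $T$; therefore $p^{-1}(L)$ is the image of $\R\times\{0\}$, equal to $\R/(T\Z)\cong\Ss^1$, and $p|_{p^{-1}(L)}$ is the identity map $\R/(T\Z)\to\R/(T\Z)$, a homeomorphism.

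The step I expect to be the main obstacle is the development carried out in the second paragraph: rigorously producing the monodromy diffeomorphism $A$ normalizing $\Gamma$ and identifying $\nu(L)$ with $(\R\times\R^{n-1})/(\Z\ltimes\Gamma)$. This is the orbifold analogue of describing a tubular neighborhood of a closed orbit as the mapping torus of its (now $\Gamma$-equivariant) Poincar\'e return map, and it is precisely where the local-group data along $L$ must be tracked consistently around the loop. Once this normal form is established, the construction of $M$ and the verification that $p$ restricts to a homeomorphism on the central circle are formal.
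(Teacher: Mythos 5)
Your proposal is correct and follows essentially the same route as the paper: both identify a tubular neighborhood $W$ of $L$ with (a neighborhood of the zero section in) the normal bundle, develop it to $\R\times\R^{n-1}$ with deck group an extension of $\Z$ by the local group $\Gamma_x$, and take $M$ to be the quotient by the infinite cyclic factor only, which acts freely and fixes no transverse direction over the central circle. The only difference is presentational: the paper obtains the splitting $\Gamma=N\rtimes H$ abstractly from $\Gamma/N\cong\Z$ being free, whereas you exhibit the complement explicitly as $\langle\sigma\rangle$ via the $\Gamma$-equivariant monodromy $A$ — the step you flag as the main obstacle is exactly the standard lifting of a diffeomorphism of a good chart quotient, which the paper invokes implicitly.
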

\begin{proof} According to our discussion in Section \ref{sub:smooth_orbifold} the normal bundle $\nu (L)$ of $L$ in $\Orb$ is an $\R^{n-1}/\Gamma_x$-bundle over $\Ss^1$ where $\Gamma_x$ is the local group of a point on $L$. Using the exponential map we find a small neighborhood $W$ of $L$ in $\Orb$ diffeomorphic to $\nu(L)$. We identify $W$ and $\nu (L)$ via this diffeomorphism. Let $\bar W$ be the universal covering of $W$ as a topological space with a smooth orbifold structure induced form $W$ so that $\bar W \To W$is also a covering of orbifolds. Since $W$ is homotopy equivalent to $\Ss^1$ the deck transformation group of the covering $\bar W \To W$ is isomorphic to $\ZZ$ and by construction it acts transitively on the fibers, i.e. the covering is normal. The space $\bar W$ is an $\R^{n-1}/\Gamma_x$-bundle over $\R$ and, in fact, it as a trivial bundle. In this case this can for instance be seen by lifting the induced $\R$-action $d\phi:\R \times \nu (L) \To \nu (L)$ to $\bar{W}$. In particular, its orbifold fundamental group is isomorphic to $\Gamma_x$. Moreover, it shows that the universal (orbifold) covering $\tilde{W}$ of $W$ is $\R^{n-1}\times \R$. Let $\Gamma$ be the deck transformation group of the covering $\tilde W \To W$ and let $N$ be its subgroup isomorphic to $\Gamma_x$ corresponding to the covering $\tilde W \To \bar W$. Since the covering $\bar W \To W$ is normal, the subgroup $N$ is normal in $\Gamma$ and the quotient $\Gamma/N$ is isomorphic to $\ZZ$, the deck transformation group of the covering $\bar W \To W$. This implies that $\Gamma$ is a semidirect product $N \rtimes H$ of $N$ with an infinite cyclic subgroup $H$ of $\Gamma$. Since $H$ does not have finite subgroups, it acts freely on $\tilde W$ and so $M=\tilde W/H$ is a manifold. By construction the group $N$ leaves the preimage of $L$ in $\tilde{W}$ point-wise fixed. It follows that the induced covering $p:M\To W$ restricts to a homeomorphism on its preimage of $L$. This completes the proof of the lemma.
\end{proof}

In the situation of the preceding lemma the vector field on $W$ induced by the $\R$-action lifts to a vector field on $M$ whose local flow covers the flow on $W$. A small disk $D$ transversal to the periodic orbit $L$ in $W$ at a point $x$ is covered by a small disk $\hat D$ in $M$ transversal to the homeomorphic lift $\hat L$ of $L$ in $M$. In $M$ we can find a smaller disk $\hat E\subset \hat D$ centered on $\hat L$ for which a smooth Poincar\'e map $\hat\theta: \hat E \To\hat D$ with smooth return time $\hat\tau: \hat E \To \R$ is defined, cf. \cite[A.14]{Besse}. Setting $E= p(\hat E)$ we then obtain a smooth Poincar\'e map $\theta: E \To D$ covered by $\hat \theta$ whose first return time $\tau: E \To \R$ satisfies $\tau \circ p=\hat \tau$ and so is smooth as well. Let $i>0$ and suppose that $\hat E$ is sufficiently small so that $\hat \theta^{i|\Gamma_x|}:\hat E \To \hat D$ and $\theta^{i}: E \To D$ are defined. Then a preimage in $\hat E$ of a fixed point $y \in E$ of $\theta^i$ is a fixed point of $\hat \theta^{i|\Gamma_x|}$.

Now we explain the proof of the orbifold version of Wadsley's result which implies our Proposition \ref{prp:common_period}.

\begin{prp} Let $\varphi: \R \times \Orb \To \Orb$ be a smooth action on an orbifold such that all orbits are periodic geodesics parametrized by arclength by the action. Then the $\R$-action factors through an $\Ss^1$-action, i.e. all orbits have a minimal common period.
\end{prp}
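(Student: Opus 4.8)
The plan is to follow the manifold proof of Wadsley's theorem \cite{MR0400257} (see also \cite[Appendix~A]{Besse}) almost verbatim, using Lemma \ref{lem:local_development} together with the discussion of Poincar\'e return maps preceding the statement to reduce every local computation to the manifold case. The manifold argument is local in nature: near a periodic orbit one analyses the first-return map on a transversal, shows it has finite order, and combines this with the fact that the orbits are unit-speed geodesics to produce a common period for all nearby orbits; a connectedness and compactness argument then upgrades these local common periods to a global one. Our task is simply to carry out each of these steps on the orbifold.

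First I would fix a periodic orbit $L$ through a point $x$ with local group $\Gamma_x$ and pass, via Lemma \ref{lem:local_development}, to the finite manifold cover $p\colon M\to W$ of a tubular neighbourhood $W$ of $L$. The $\R$-action lifts to a smooth flow $\hat\varphi$ on $M$ with $p\circ\hat\varphi_t=\varphi_t\circ p$, and since $p$ has finite degree $|\Gamma_x|$ every orbit of $\hat\varphi$ is again periodic. The geodesic structure is inherited: writing $\xi$ for the generating vector field and $\alpha=\xi^\flat$ for its metric dual, one has $\alpha(\xi)=1$ and $\iota_\xi\,d\alpha=0$ because the orbits are unit-speed geodesics, so $\alpha$ is a Reeb-type form upstairs. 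Hence the local analysis of the manifold case \cite[Appendix~A]{Besse} applies verbatim to $\hat\varphi$ near $\hat L=p^{-1}(L)$: the first-return map $\hat\theta$ is pointwise periodic, hence of finite order $\hat\theta^{\hat N}=\mathrm{id}$ on a connected neighbourhood of $\hat L$ in the transversal, and since the period of a closed orbit $\gamma$ equals $\int_\gamma\alpha$, whose first variation through closed orbits vanishes (as $\iota_\xi\,d\alpha=0$), this period is locally constant. This yields a local common period $\hat P$ with $\hat\varphi_{\hat P}=\mathrm{id}$ on a neighbourhood $\hat W'$ of $\hat L$. The Poincar\'e correspondence ``a preimage of a fixed point of $\theta^i$ is a fixed point of $\hat\theta^{i|\Gamma_x|}$'' is exactly what guarantees that the periodic orbits of the orbifold return map $\theta$ are precisely those seen by $\hat\theta$ upstairs, so that no nearby orbit is missed.

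Descending is then immediate: from $p\circ\hat\varphi_t=\varphi_t\circ p$ and the surjectivity of $p$ onto $W'=p(\hat W')$ one gets $\varphi_{\hat P}=\mathrm{id}$ on $W'$, so every point of $\Orb$ has a neighbourhood on which $\varphi$ admits a common period. To globalize, I would use that $\Orb$ is compact (which is the relevant case, $\Orb$ being the unit tangent bundle of a Besse orbifold, cf.\ Corollary \ref{C:reduce-to-simply-connected}) and cover it by finitely many such neighbourhoods $W'_1,\dots,W'_m$ with local common periods $P_1,\dots,P_m$. On a connected overlap $W'_i\cap W'_j$ any orbit has both $P_i$ and $P_j$ as periods, so both are integral multiples of its minimal period and in particular commensurable; since $\Orb$ is connected, all the $P_i$ are pairwise commensurable, and their least common multiple $P$ satisfies $\varphi_P=\mathrm{id}$ on all of $\Orb$. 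Thus $\varphi$ factors through $\R/P\ZZ\cong\Ss^1$, which is the assertion.

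The step I expect to require the most care is the interface between the orbifold flow and its manifold cover: one must check that $\hat\varphi$ is complete with all orbits closed, that $\theta$ and $\hat\theta$ genuinely intertwine under $p$, and that the finite order of $\hat\theta$ controls the order of $\theta$ through the factor $|\Gamma_x|$ recorded before the statement. Once this bookkeeping is in place, the geometric heart of the argument---local constancy of the period via the Reeb form $\alpha$, which is precisely what excludes Sullivan-type flows with unbounded periods---is identical to the manifold case and requires no modification.
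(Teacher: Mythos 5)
Your overall strategy (localize near a periodic orbit, pass to the manifold cover of Lemma \ref{lem:local_development}, analyse the return map there, then globalize) is the right skeleton, and your first-variation argument for local constancy of the period matches the paper's. But there is a genuine gap at the step ``the first-return map $\hat\theta$ is pointwise periodic, hence of finite order on a connected neighbourhood of $\hat L$''. That sentence conceals the analytic heart of Wadsley's theorem. A priori the period function $\varrho(x)=$ (length of the orbit through $x$) is only lower semicontinuous and may be unbounded on every neighbourhood of $L$; in that case the return map is not a self-homeomorphism of any invariant transversal neighbourhood (orbits of very large length need not return inside the transversal in a controlled way, so neither the relation $\varrho(x)=\sum_i\tau(x_i)$ nor an identity $\hat\theta^{\hat N}=\mathrm{id}$ is available), and ``pointwise periodic implies periodic'' cannot be invoked. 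Ruling out this locally unbounded behaviour is exactly where Newman's theorem enters, and it is also where the orbifold structure genuinely intervenes: the paper introduces the sets $B_1$ (points near which $\varrho$ is unbounded) and $B_2$, constructs on the manifold cover a continuous finite-order homeomorphism $\psi$ equal to the $|\Gamma_y|$-th power of the lifted return map on $\bar V'$ and to the identity elsewhere, and derives a contradiction from Newman's theorem; the divisibility of $|\Gamma_y|$ into $|\Gamma_x|$ and the inequality $k\ge 2|\Gamma_y|$ are needed to guarantee $\psi\ne\mathrm{id}$. None of this appears in your outline, and without it the proof does not go through.

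Two smaller points. First, the lifted flow on $M$ is only a local flow: an orbit through a point of $W$ near $L$ is a closed geodesic of $\Orb$ that may leave $W$, so the assertion that ``every orbit of $\hat\varphi$ is again periodic'' in $M$ is not meaningful as stated; this is why the paper, following \cite{Besse}, phrases everything in terms of Poincar\'e maps on transversals rather than in terms of the flow upstairs. Second, your commensurability-plus-least-common-multiple globalization uses compactness, which the proposition does not assume; the paper instead shows that $\alpha(x)=\limsup_{y\to x}\varrho(y)$ is locally constant on the complement of $B_1$ (via the first variation formula, as you suggest) and that $B_1=\emptyset$, so constancy follows from connectedness alone.
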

\begin{proof} We only explain those parts of the proof in detail that require modifications compared to the proof in the manifold case given in \cite{Besse}. Otherwise we refer to \cite{Besse} for the details. We follow the notation from \cite{Besse}. Let $\varrho: \Orb \To \R$ be the function that assigns to a point $x\in \Orb$ the length of the orbit through $x$. The function $\varrho$ is not necessarily locally bounded. However, if the orbit of a point $x_0\in E$ (notation as in the preceding paragraph) defined by $x_i=\theta^i(x_0)$, $i>0$, remains in $E$ for every value of $i$, then we have the following relation. Since the periodic orbit through $x$ meets $E$ only a finite number of times, the $\theta$-orbit of $x$ is finite, say of order $n(x)$, we have
\begin{equation} \label{eqn:A16}
\varrho(x)=\sum^{n(x)-1}_{i=0} \tau(x_i).
\end{equation}

Since the local analysis using Poincar\'e maps is available in the orbifold setting, the following statement can be proven verbatim as in the manifold case, see \cite[Prop.~A.17]{Besse}.
\begin{prp} \label{A.17} Let $L$ be a fixed orbit of length $\lambda$. Let $k>0$ and $\varepsilon>0$ be given. Then there is a neighborhood $U$ of $L$ such that if $x\in U$ and $\varrho (x)\leq k$, then
\begin{compactenum}
\item for some integer $i$ with $0\leq i\leq\lfloor k/\lambda \rfloor+1< \varepsilon$ we have $|\varrho(x)-i\lambda|<\epsilon$ and
\item the orbit through $x$ lies in an $\varepsilon$-neighborhood of $L$.
\end{compactenum}
\end{prp}
Using this proposition the following corollary is proven in the same way as in \cite[Cor.~A.18]{Besse}.
\begin{cor} \label{A.18} The function $\varrho$ is lower semicontinuous. If $N$ is any compact subset of $\Orb$, then the set $V$ of points of continuity of $\varrho_{|N}$ is an open subset of $N$. If $N$ is locally compact, then $V$ is dense in $N$.
\end{cor}
Now suppose that $\varrho$ is bounded by some $k>0$ in a neighborhood of an orbit $L$  through a point $x$. Then as in \cite[A.19]{Besse} one can apply Proposition \ref{A.17} to find a small Poincar\'e disk $E'$ centered at $x$ with $\theta(E')=E'$. Since $\varrho$ is bounded by $k$ on $E'$ the orders of the $\theta$-orbits are bounded by some $m$. For such an $m$ the composition $\theta^{m!}$ is the identity on $E'$. The map $\theta:E'\To E'$ is covered by a diffeomorphism $\hat \theta:\hat E'\To \hat E'$ of finite order of a disk $\hat E'$ in $M$. Using this cover the argument from \cite[A.20]{Besse} shows that the set of points in $E'$ whose $\theta$-orbit has the same order as $\theta$, say $r$, is open and dense in $E'$. Therefore, equation (\ref{eqn:A16}) shows that the function $\alpha(x)=\limsup_{y\rightarrow x}\varrho(y)$ satisfies
\begin{equation} \label{eqn:A22}
\alpha(x)=\sum^{r-1}_{i=0} \tau(\theta^i x)
\end{equation}
for $x\in E'$. Since $\alpha$ is invariant under the flow, this shows that $\alpha$ is smooth in a neighborhood of $L$. Moreover, if $X$ denotes the vector field defined by the $\R$-action on $\Orb$, then the flow of the vector field $X/\alpha$ in a neighborhood of $L$ factors through an $\Ss^1$-action. Hence, in order to prove the proposition, it suffices to show that $\alpha$ is constant on $\Orb$.

Set $B_1=\{x\in \Orb \mid \varrho \text{ is unbounded in every neighborhood of }x\}$ and write $\varphi_t(x)$ for $\varphi(t,x)$. By continuity we have $\varphi_{\alpha(x)}(x)=x$ for all $x\in \Orb \backslash B_1$. Let $\gamma:(-1,1)\To \Orb$ be a smooth path with $\gamma(0)=x$ and consider the one-parameter family of geodesics defined by
\[
		(s,t) \longmapsto \varphi_{t\alpha(\gamma(s))}(\gamma(s))
\]
for $t\in [0,1]$. For fixed $s$ the length of this geodesic is $\alpha(\gamma(s))$. On the other hand, the first variation formula, which also works for orbifolds, implies that this length is independent of $s$. This implies that $\alpha$ is locally constant on $M\backslash B_1$. Hence, in order to prove the proposition, it suffices to show that $B_1$ is empty.

The proof of $B_1 = \emptyset$ is an application of a theorem by Newman and works by contradiction. Since Newman's theorem only applies to manifolds the proof in the orbifold case has to be slightly refined as follows compared to the proof in \cite[p.~220]{Besse}. Let $B_2=\{x\in B_1 \mid \varrho_{|B_1} \text{ is not continuous at } x\}$. Corollary \ref{A.18} implies that $B_1$ is closed and has void interior in $\Orb$ and that $B_2$ is closed with void interior in $B_1$.
Let $x \in B_1 \backslash B_2$ and let $U$ be a connected open neighborhood of $x$ which is a quotient of a manifold chart by the local group $\Gamma_x$, which is disjoint from $B_2$ and such that
\begin{equation}\label{eq:A.27}
|\varrho(u)-\varrho(v)|<\varrho(w)/8 \text{ for } u,v,w \in U \cap B_1.
\end{equation}
Such an $U$ exists since $\varrho$ is lower semicontinuous on $\Orb$ and its restriction to $B_1$ is continuous on $B_1 \backslash B_2$.
Let $V$ be a component of $\Orb\backslash B_1$ which meets $U$ and such that
\begin{equation}\label{eq:A.28}
c>3|\Gamma_x|\varrho(x), \text{ where } \alpha_{|V}\equiv c.
\end{equation}
By continuity the restriction of the time $c$ flow map $\varphi_c$ to $\bar{V}$ is the identity, and by definition of $V$ we have $\partial V \subset B_1$. Moreover, we have
\begin{equation}\label{eq:A.29}
\mathrm{int}(\bar V)=V.
\end{equation}
Since $U$ is connected and meets $B_1$, there is a point $y\in U \cap \partial V$. By standard orbifold theory the order of $\Gamma_y$ divides the order of $\Gamma_x$. Since $\varphi_c(y)=y$, there is an integer $k$ such that $c=k\varrho(y)$. By (\ref{eq:A.27}) and (\ref{eq:A.28}), $k\geq 2 |\Gamma_y|$.

Let $p:M \To W$ be given by Lemma \ref{lem:local_development} applied to the periodic orbit $L$ through $y$. Set $V'=p^{-1}(V\cap W)$ and let $\hat y \in M$ be a preimage of $y$. Let $\hat D$ be a small open disk of codimension one with $p(\hat D)\subset U$, transverse to the orbit through $\hat y$ and with center $\hat y$. Let $\hat E$ be a smaller concentric disk on which $\hat \theta^i$ is defined for $1\leq i\leq k!|\Gamma_y|$, with each $\hat \theta^i \hat E\subset \hat D$. By lower semicontinuity of $\varrho$ and the fact that $\varphi_c$ is the identity on $\bar V$ we can assume that the restriction of $\theta^{k!}$ to $p(\hat E)\cap \bar V$ is the identity. Then also the restriction of $\hat \theta^{k!|\Gamma_y|}$ to $\hat E\cap \bar V'$ is the identity. It follows as in \cite{Besse} that there exists an open, connected and $\Gamma_y$-invariant neighborhood $J$ of $\hat y$ in $E$ with
\[
			\hat \theta (J \cap \bar V ')=J \cap \bar V '.
\]
We define $\psi: J \To J$ by $\psi(z)=\hat \theta^{|\Gamma_y|}(z)$ for $z \in J \cap \bar V' $ and $\psi(z)=z$ otherwise. By (\ref{eq:A.27}) the restriction of $\theta$ to $p(\hat E) \cap \partial V$ is the identity. Hence, also the restriction of $\hat \theta^{|\Gamma_y|}$ to $\hat E\cap \partial V'$ is the identity and so the map $\psi$ is continuous. Since $\psi^{k!}=id$, $\psi$ is a homeomorphism of finite order. Since $k\geq 2 |\Gamma_y|$, $\psi$ is not the identity on $J \cap V'$. By Newman's theorem $J\cap V'$ is dense in $J$. Since $p$ is open, it follows that $V$ is dense in a neighborhood of $y$, and hence $\varphi_c$ is the identity in a neighborhood of $y$. This contradicts the fact that any neighborhood of $y$ contains points from $B_1$, because of $y \in \partial V$ and (\ref{eq:A.29}). The proof is complete.
\end{proof}

\end{appendix}

\end{document}